\definecolor{green}{RGB}{0,102,0}
\newcommand{\RO}[1]{{\textcolor{black}{#1}}} 
\newcommand{\RT}[1]{{\textcolor{black}{#1}}} 
\newcommand{\OWN}[1]{{\textcolor{black}{#1}}} 
\renewcommand{\algorithmicrequire}{\textbf{Input: }}
\renewcommand{\algorithmicensure}{\textbf{Output: }}
\newcommand*{\range}{\text{Ran}}
\newcommand*{\pseudoinvSymb}{+}
\newcommand*{\pseudoinv}{^\pseudoinvSymb}
\newcommand*{\tran}{^{{T}}}
\newcommand*{\expect}{\mathbb E}
\newcommand*{\identity}{\bm{I}}			
\newcommand*{\zero}{\bm{0}}			
\newcommand*{\ones}{\bm{1}}			
\newcommand*{\GalerkinName}{manifold Galerkin}			
\newcommand*{\GalerkinNameCap}{Manifold Galerkin}			
\newcommand*{\GalerkinNameDeep}{Deep Galerkin}			
\newcommand*{\GalerkinNameDeepCap}{Deep Galerkin}			
\newcommand*{\LSPGName}{manifold LSPG}			
\newcommand*{\LSPGNameLong}{manifold least-squares Petrov--Galerkin (LSPG)}			
\newcommand*{\LSPGNameCap}{Manifold LSPG}			
\newcommand*{\LSPGNameDeep}{Deep LSPG}			
\newcommand*{\LSPGNameDeepCap}{Deep LSPG}			
\newcommand{\RR}[1]{\mathbb{R}^{#1}} 					
\newcommand{\RRstar}[1]{\mathbb{R}_\star^{#1}} 					
\newcommand{\RRplus}{\mathbb{R}_{+}} 					
\newcommand{\restrictionOp}{\bm{R}} 					
\newcommand{\prolongationOp}{\bm{P}} 					
\newcommand{\scalingOp}{\bm{S}} 					
\newcommand{\randperm}{\bm{P}} 					
\newcommand{\randpermTrain}{\bm{P}_\text{train}} 					
\newcommand{\scalingOpEl}[3]{s_{#1\cdots #2#3}} 					
\newcommand{\nspaceDim}[1]{n_{#1}} 					
\newcommand{\nDim}{d} 					
\newcommand{\nchannel}{n_\text{chan}} 					
\newcommand{\nsnap}{n_\text{snap}} 					
\newcommand{\nsubsnap}{n_t} 					
\newcommand{\dofhf}{N} 					
\newcommand{\nstate}{\dofhf} 					
\newcommand{\ndof}{\dofhf} 					
\newcommand{\dofrom}{p}					
\newcommand{\nstatered}{p}					
\newcommand{\intrinsicDim}{p^\star}					
\newcommand{\nseq}{{N_t}}					
\newcommand{\ntime}{{N_t}}					
\newcommand{\ntrain}{{n_{\text{train}}}}
\newcommand{\ntrainNN}{m}
\newcommand{\nvalidationNNfrac}{\omega}
\newcommand{\nvalNN}{\bar m}
\newcommand{\ninput}{n_{x}}
\newcommand{\nlatent}{n_{\hat{x}}}
\newcommand*{\nconv}{n_{\text{conv}}}
\newcommand*{\nfull}{n_{\text{full}}}
\newcommand*{\nlayer}{n_{\text{L}}}
\newcommand*{\nenclayer}{n_{\text{L}}}
\newcommand*{\ndeclayer}{\bar{n}_{\text{L}}}
\newcommand*{\dofenclayer}[1]{p_{#1}} 			
\newcommand*{\dofdeclayer}[1]{\bar{p}_{#1}} 			
\newcommand{\nepoch}{n_{\text{epoch}}}
\newcommand{\nbatch}{n_{\text{batch}}}
\newcommand*{\states}{\bm{x}}				
\newcommand*{\stateElementNo}{{x}}				
\newcommand*{\state}{\states}				
\newcommand*{\initstate}{\states^{0}}	
\newcommand*{\initrdstate}{\rdstate^{0}}	
\newcommand*{\stateRef}{\state_\text{ref}}	
\newcommand*{\xoned}{x}
\newcommand*{\xtwod}{\vec{\xoned}}
\newcommand*{\stateFOM}{\bm{x}}				
\newcommand*{\stateFOMArg}[1]{\stateFOM^{#1}}				
\newcommand*{\stateROM}{{\tilde{\bm{x}}}}				
\newcommand*{\stateROMArg}[1]{\stateROM^{#1}}				
\newcommand*{\stateROMGalArg}[1]{\stateRef + \rdbasisnl(\rdstateGal^{#1})}				
\newcommand*{\negstateROMGalArg}[1]{- \stateRef - \rdbasisnl(\rdstateGal^{#1})}				
\newcommand*{\stateROMLSPGArg}[1]{\stateRef + \rdbasisnl(\rdstateLSPG^{#1})}				
\newcommand*{\negstateROMLSPGArg}[1]{- \stateRef - \rdbasisnl(\rdstateLSPG^{#1})}				
\newcommand*{\gstate}{\bm{w}} 			
\newcommand*{\aprxstate}{\tilde{\states}}
\newcommand*{\eqvar}{w}
\newcommand*{\eqvarvec}{\bm{\eqvar}}
\newcommand*{\bigO}{O}
\newcommand*{\velo}{\bm{f}}
\newcommand*{\gvelo}{\hat{\bm{v}}}
\newcommand*{\unitVecRedArg}[1]{\hat{\bm{e}}_{#1}}
\newcommand*{\paramSymb}{{\mu}}			
\newcommand*{\param}{\bm{\paramSymb}}			
\newcommand*{\gparam}{\bm{\nu}}			
\newcommand*{\paramelem}[1]{\paramSymb_{#1}}
\newcommand*{\paramParProof}{}
\newcommand*{\gparamParProof}{}
\newcommand*{\paramSemiProof}{}
\newcommand*{\timesparamspaceProof}{}
\newcommand*{\gparamSemiProof}{}
\newcommand*{\paramtrain}{\param_{\text{train}}}
\newcommand*{\paramtest}{\param_{\text{test}}}
\newcommand*{\difftime}[1]{\dot{#1}}
\newcommand*{\paramspace}{\mathcal D}	
\newcommand*{\nparam}{{n_\paramSymb}}	
\newcommand*{\paramspacetrain}{\paramspace_{\text{train}}}
\newcommand*{\res}{\bm{r}}				
\newcommand*{\timestep}{\Delta t}				
\newcommand*{\resnFOM}{\bm{r}_\star^n}				
\newcommand*{\resnROM}{\tilde{\bm{r}}^n}				
\newcommand*{\resnFOMArg}[1]{\bm{r}_\star^n\left(#1\right)}				
\newcommand*{\resnROMArg}[1]{\tilde{\bm{r}}^n\left(#1\right)}				
\newcommand*{\rdstate}{\hat{\states}}		
\newcommand*{\rdstateGal}{\rdstate_G}		
\newcommand*{\rdstateLSPG}{\rdstate_P}		
\newcommand*{\grdstate}{\hat{\bm{\xi}}}
\newcommand*{\krdstate}{\hat{\bm{w}}}
\newcommand*{\manifold}{\mathcal S}
\newcommand*{\tangentSpace}[1]{T_{#1}\manifold }
\newcommand*{\defeq}{\vcentcolon =}
\newcommand*{\natno}[1]{\RO{\{1,\ldots,#1\}}}
\newcommand*{\innatno}[1]{\in\natno{#1}}
\newcommand*{\snapshotMat}{\bm{W}}
\newcommand*{\stateSnapshot}[1]{\bm{w}^{#1}}				
\newcommand*{\snapshotMatArg}[1]{\snapshotMat(#1)}
\newcommand*{\snapshotTrain}{\snapshotMat_{\text{train}}}
\newcommand*{\minibatchArg}[1]{\snapshotMat_{\text{mini},#1}}
\newcommand*{\minibatchVecArgs}[2]{\state_{\text{mini},#1}^{#2}}
\newcommand*{\iterToBatch}[1]{\mathcal I(#1)}
\newcommand*{\iterToBatchNo}{\mathcal I}
\newcommand*{\nminibatchArg}[1]{{\ntrainNN}^{#1}}
\newcommand*{\snapshotTrainArg}[1]{\state^{#1}_{\text{train}}}
\newcommand*{\snapshotVal}{\snapshotMat_{\text{val}}}
\newcommand*{\snapshotValArg}[1]{\state^{#1}_{\text{val}}}
\newcommand*{\gradientApproxArg}[1]{\widetilde {\nabla\objfunc}^{(#1)}}
\newcommand*{\gradient}{\nabla\objfunc}
\newcommand*{\rdbasisnl}{\bm{g}}			
\newcommand*{\rdbasisnlEnc}{\bar{\bm{g}}}			
\newcommand*{\jacrdbasisnl}{\bm{J}}				
\newcommand*{\jacrdresApprox}{\tilde{\bm{J}}^n_{\hat r_G}}				
\newcommand*{\jacrdresLSPGApprox}{\tilde{\bm{J}}^n_{\hat r_L}}				
\newcommand*{\searchDirNK}{{\bm{p}}^{n(k)}}				
\newcommand*{\rdstateNK}{{\rdstate}^{n(k)}}				
\newcommand*{\rdstateNKp}{{\rdstate}^{n(k+1)}}				
\newcommand*{\rdstateNZero}{{\rdstate}^{n(0)}}				
\newcommand*{\linesearchNK}{{\alpha}^{n(k)}}				
\newcommand*{\compactStiefel}[2]{ V_k(\RR{#2})}				
\newcommand*{\stateSnapshotRestrict}[1]{{\mathcal W}^{#1}}				
\newcommand*{\stateSnapshotRestrictEl}[4]{\mathcal W^{#1}_{#2\cdots#3#4}}				
\newcommand*{\rdbasis}{\bm{\Phi}}
\newcommand*{\rdbasisi}{\bm{\phi}}
\newcommand*{\rdtestbasis}{\bm{\Psi}}
\newcommand*{\testBasisGal}{\bm{\Psi}_G}
\newcommand*{\scalarConstant}{\gamma}
\newcommand*{\constantAnalysis}{\bm{A}^n(\grdstate\gparamSemiProof)}
\newcommand*{\constantAnalysisGal}{\bm{A}_G^n(\grdstate\gparamSemiProof)}
\newcommand*{\constantSymb}{\bm{A}^n}
\newcommand*{\constantSymbGal}{\bm{A}^n_G}
\newcommand*{\constantDefAnalysis}{[\testBasisTransposeAnalysis\rdbasis]^{-1}}
\newcommand*{\constantDefAnalysisGal}{[\testBasisTransposeAnalysisGal\rdbasis]^{-1}}
\newcommand*{\testBasis}{\rdtestbasis^{n}(\grdstate;\gparam)}
\newcommand*{\testBasisAnalysis}{\rdtestbasis^{n}(\grdstate\gparamSemiProof)}
\newcommand*{\testBasisAnalysisGalNo}{\rdtestbasis_G}
\newcommand*{\testBasisAnalysisGal}{\testBasisAnalysisGalNo(\grdstate\gparamSemiProof)}
\newcommand*{\testBasisTransposeAnalysis}{\testBasisAnalysis\tran}
\newcommand*{\testBasisTransposeAnalysisGal}{\testBasisGal(\grdstate)\tran}
\newcommand*{\testBasisAnalysisLongLinear}{\left( \alpha_0
\identity - \Delta t \beta_0 \frac{\partial \velo}{\partial \dstate}
\left(\stateRef\gparamParProof  + \rdbasis \grdstate, t^n\gparamSemiProof \right)
\right)\rdbasis}
\newcommand*{\rdres}{\hat{\bm{r}}_G}
\newcommand*{\rdresDisc}{\hat{\bm{r}}_{G,\text{disc}}}
\newcommand*{\rdresn}{\rdres^n}
\newcommand*{\rdresLSPG}{\hat{\bm{r}}_L}
\newcommand*{\rdresLSPGn}{\rdresLSPG^n}
\newcommand*{\dstate}{\bm{\xi}}
\newcommand*{\drdstate}{\hat{\bm{\xi}}}
\newcommand*{\drdstateArg}[1]{\hat{{\xi}}_{#1}}
\newcommand*{\minstateArg}[1]{\mathcal W_{#1}^{\min}}
\newcommand*{\maxstateArg}[1]{\mathcal W_{#1}^{\max}}
\newcommand{\norm}[1]{\|#1\|_2}
\newcommand{\abs}[1]{\left|#1\right|}
\newcommand{\lipschitz}{\kappa}
\newcommand{\hconstant}{h}
\newcommand{\gammaconstantj}{\gamma_j}
\newcommand*{\autoenc}{\bm{h}}			
\newcommand*{\encoder}{\autoenc_{\text{enc}}}
\newcommand*{\decoder}{\autoenc_{\text{dec}}}
\newcommand*{\enclayer}{\autoenc}			
\newcommand*{\declayer}{\bar{\autoenc}}			
\newcommand*{\gvec}{\bm{x}}				
\newcommand*{\gaprxvec}{\tilde{\gvec}}
\newcommand*{\latcode}{\hat{\gvec}}
\newcommand*{\lossfunc}{\mathcal{L}}
\newcommand*{\activation}{\bm{\phi}}		
\newcommand*{\encactivation}{\activation}		
\newcommand*{\decactivation}{\bar{\activation}}		
\newcommand*{\affinetrans}{h}
\newcommand*{\encaffinetrans}{\affinetrans}
\newcommand*{\decaffinetrans}{\bar{\affinetrans}}
\newcommand*{\nnparam}{\bm{\theta}}
\newcommand*{\nnparamOpt}{\nnparam^\star}
\newcommand*{\encparam}{\nnparam_{\text{enc}}}
\newcommand*{\decparam}{\nnparam_{\text{dec}}}
\newcommand*{\encparaml}[1]{\bm{\Theta}_{#1}}
\newcommand*{\decparaml}[1]{\bar{\bm{\Theta}}_{#1}}
\newcommand*{\metric}{{\bm\Theta}}
\newcommand*{\metricHalf}{\bm\Theta^{1/2}}
\newcommand*{\metricHalfT}{\bm\Theta^{T/2}}
\newcommand*{\objfunc}{\bm{J}}
\newcommand*{\empdist}{\hat p_{\text{data}}}
\newcommand{\learningrate}{\eta}
\newcommand{\learningrateVec}{\bm{\learningrate}}
\newcommand{\learningrateVecArg}[1]{\learningrateVec^{(#1)}}
\newcommand{\diffusivity}{\kappa}
\newcommand{\velocity}{\bm{v}}
\newcommand{\reaction}{q}
\newcommand{\reactionvec}{\bm{\reaction}}
\newcommand*{\spatialspace}{\Omega}
\newcommand{\fuel}{\text{H}_2}
\newcommand{\oxi}{\text{O}_2}
\newcommand{\product}{\text{H}_2\text{O}}
\newcommand*{\activationcost}{c_{\activation}}
\theoremstyle{definition}
\newtheorem{proposition}{Proposition}[section]
\newtheorem{theorem}{Theorem}[section]
\newtheorem{remark}[theorem]{Remark}
\begin{document}
\numberwithin{equation}{section}
\begin{frontmatter}
	\title{Model reduction of dynamical systems on nonlinear manifolds\\ using
	deep convolutional autoencoders}

\author[sandia]{Kookjin Lee\corref{sandiacor}}
\ead{koolee@sandia.gov}
\author[sandia]{Kevin T.\ Carlberg}
\ead{ktcarlb@sandia.gov}
\ead[url]{sandia.gov/~ktcarlb}

\address[sandia]{Sandia National Laboratories}
\cortext[sandiacor]{7011 East Ave, MS 9159, Livermore, CA 94550.}

\begin{abstract}
Nearly all model-reduction techniques project the governing equations onto a
	linear subspace of the original state space. Such subspaces are typically
	computed using methods such as balanced truncation, rational interpolation,
	the reduced-basis method, and (balanced) proper orthogonal decomposition
	(POD). Unfortunately, restricting the
	state to evolve in a linear subspace imposes a fundamental limitation
	to the accuracy of the resulting reduced-order model (ROM). In particular,
	linear-subspace ROMs can be expected to produce low-dimensional models with
	high accuracy only if the problem admits a fast decaying Kolmogorov
	$n$-width (e.g., diffusion-dominated problems). Unfortunately, many problems
	of interest exhibit a slowly decaying Kolmogorov $n$-width (e.g.,
	advection-dominated problems). To address this, we propose a novel framework
	for projecting dynamical systems onto nonlinear manifolds using
	minimum-residual formulations at the time-continuous and time-discrete
	levels; the former leads to \textit{\GalerkinName}\ projection, while the
	latter leads to \textit{\LSPGNameLong}\ projection.  We perform analyses that
	provide insight into the relationship between these proposed approaches and
	classical linear-subspace reduced-order models\OWN{; we also derive
	\textit{a posteriori} discrete-time error bounds for the proposed
	approaches}. In addition, we propose a
	computationally practical approach for computing the nonlinear 
	manifold, which is based on convolutional autoencoders from deep learning.
	Finally, we demonstrate the ability of the method to significantly
	outperform even the optimal linear-subspace ROM on benchmark
	advection-dominated problems, thereby demonstrating the method's ability to
	overcome the intrinsic $n$-width limitations of linear subspaces.
\end{abstract}

\begin{keyword}
model reduction \sep  deep learning \sep autoencoders \sep machine learning
	\sep nonlinear manifolds \sep optimal projection

\end{keyword}
\end{frontmatter}

\section{Introduction}\label{sec:intro}

Physics-based modeling and simulation has become indispensable across many
applications in engineering and science, ranging from aircraft design to
monitoring national critical infrastructure. However, as simulation is playing
an increasingly important role in scientific discovery, decision making, and
design, greater demands are being placed on model fidelity.  Achieving high
fidelity often necessitates including fine spatiotemporal resolution in
computational models of the system of interest; this can lead to very
large-scale models whose simulations consume months on thousands of computing
cores. This computational burden precludes the integration of such
high-fidelity models in important scenarios that are \textit{real time} or
\textit{many query} in nature, as these scenarios require the (parameterized)
computational model to be simulated very rapidly (e.g., model predictive control) or
thousands of times (e.g., uncertainty propagation).


Projection-based reduced-order models (ROMs) provide one approach for
overcoming this computational burden.  These techniques comprise two stages:
an \textit{offline} stage and an \textit{online} stage. During the offline
stage, these methods perform computationally expensive training tasks (e.g., simulating
the high-fidelity model at several points in the parameter space) to compute a
representative low-dimensional `trial' subspace for the system state. Next,
during the inexpensive online stage, these methods rapidly compute approximate
solutions for different points in the parameter space via projection: they
compute solutions in the low-dimensional trial subspace by enforcing the
high-fidelity-model residual to be orthogonal to a low-dimensional test
subspace of the same dimension.

As suggested above, nearly all projection-based model-reduction approaches
employ \textit{linear trial subspaces}.  This includes the reduced-basis
technique \cite{prud2002reliable,rozza2007reduced} and proper orthogonal
decomposition (POD) \cite{holmes2012turbulence,carlberg2011low} for
parameterized stationary problems; balanced truncation
\cite{moore1981principal}, rational interpolation
\cite{baur2011interpolatory,GugercinIRKA}, and Craig--Bampton model reduction
\cite{craig1968coupling} for linear time invariant (LTI) systems; and Galerkin
projection \cite{holmes2012turbulence}, least-squares Petrov--Galerkin
projection \cite{carlberg2011efficient}, and other
Petrov--Galerkin projections \cite{willcox2002bmr} with (balanced) POD
\cite{holmes2012turbulence,lall2002sab,willcox2002bmr,rowley2005mrf} for
nonlinear dynamical systems.

The Kolmogorov $n$-width \cite{pinkus2012n} provides one way to quantify the
optimal linear trial subspace; it is defined as
$$
d_n(\mathcal M) \defeq \inf_{\mathcal S_n}\sup_{f\in\mathcal M}\inf_{g\in \mathcal
S_n}\|f-g\|,
$$
where the first infimum is taken over all $n$-dimensional subspaces of the
state space, and $\mathcal M$
denotes the manifold of solutions over all time and parameters. Assuming the
dynamical system has a unique trajectory for each parameter instance, 
the \textit{intrinsic solution-manifold dimensionality} is (at
most) equal to the
number of parameters plus one (time).
For problems that exhibit a fast decaying Kolmogorov $n$-width (e.g.,
diffusion-dominated problems), employing a linear trial subspace is
theoretically justifiable \cite{bachmayr2017kolmogorov,binev2011convergence}
and has enjoyed many successful demonstrations. Unfortunately, many
computational problems exhibit slowly decaying Kolmogorov $n$-width (e.g.,
advection-dominated problems). In such cases, the use of low-dimensional
linear trial subspaces often produces inaccurate results; the ROM
dimensionality must be significantly increased to yield acceptable accuracy
\cite{ohlberger2016reduced}. Indeed, the Kolmogorov $n$-width with $n$ equal
to the intrinsic solution-manifold dimensionality is often quite large for
such problems.


Several approaches have been pursued to address this $n$-width limitation of
linear trial subspaces.  One set of approaches transforms the trial basis to improve its approximation properties for
advection-dominated problems.  Such methods include separating transport
dynamics via `freezing' \cite{ohlberger2013nonlinear}, applying
a coordinate transformation to the trial basis
\OWN{\cite{iollo2014advection,nair2017transported,cagniart2019model}}, shifting the POD basis 
\cite{reiss2018shifted}, transforming the physical domain of the
snapshots \cite{welper2017interpolation,welper2017h}, constructing the trial
basis on a Lagrangian formulation of the governing equations
\cite{mojgani2017lagrangian}, and using Lax pairs of the Schr\"{o}dinger
operator to construct a time-evolving trial basis
\cite{gerbeau2014approximated}. Other approaches pursue the use of multiple
linear subspaces instead of employing a single global linear trial subspace;
these local subspaces can be tailored to different regions of the time domain
\cite{drohmann2011adaptive,dihlmann2011model}, physical domain
\cite{taddei2015reduced}, or state space
\cite{amsallem2012nonlinear,peherstorfer2015online}.
Ref.~\cite{carlberg2015adaptive} aims to overcome the limitations of using a
linear trial subspace by providing online-adaptive $h$-refinement mechanism that
constructs a hierarchical sequence of linear subspaces that converges to
the original state space. However, all of these methods attempt to construct,
manually transform, or refine a linear basis to be locally accurate; they do not
consider nonlinear trial manifolds of more general structure. Further, many of
these approaches rely on substantial additional knowledge about the problem,
such as the particular advection phenomena governing basis shifting.

This work aims to address the fundamental $n$-width deficiency of linear trial
subspaces. However, in contrast to the above methods, we
pursue an approach that is both \textit{more general} (i.e., it should not be limited to
piecewise linear manifolds or mode transformations) and only requires the
\textit{same
snapshot data} as typical POD-based approaches (e.g., it should require no
special knowledge about any particular advection phenomena).  To accomplish
this objective, we propose an approach that (1) performs optimal projection of
dynamical systems onto arbitrary nonlinear trial manifolds (during the online
stage), and (2) computes this nonlinear trial manifold from snapshot data
alone (during the offline stage).  

For the first component, we perform optimal projection onto arbitrary (continuously
differentiable) nonlinear trial manifolds by applying minimum-residual
formulations at the time-continuous (ODE) and time-discrete (O$\Delta$E)
levels. The time-continuous formulation leads to \textit{\GalerkinName}\
projection, which can be interpreted as performing orthogonal projection of
the velocity onto the tangent space of the trial manifold. The time-discrete
formulation leads to \textit{\LSPGNameLong}\ projection, which can also be
straightforwardly extended to stationary (i.e., steady-state) problems. We also perform analyses
that illustrate the relationship between these manifold ROMs and classical
linear-subspace ROMs. \GalerkinNameCap\ and \LSPGName\ projection require the
trial manifold \RO{to} be characterized as a (generally nonlinear) mapping from the low-dimensional reduced state to
the high-dimensional state; the mapping from the high-dimensional state to the
low-dimensional state is not required.

The second component aims to compute a nonlinear trial manifold from snapshot
data alone. Many machine-learning methods exist to perform nonlinear
dimensionality reduction. However, many of these methods do not provide the
required mapping from the low-dimensional embedding to the high-dimensional input; examples include
Isomap \cite{tenenbaum2000global}, locally linear embedding (LLE)
\cite{roweis2000nonlinear}, Hessian eigenmaps \cite{donoho2003hessian},
spectral embedding \cite{belkin2003laplacian}, and t-SNE
\cite{maaten2008visualizing}. Methods that \textit{do} provide this required mapping
include self-organizing maps \cite{kohonen1982self}, generative topographic
mapping \cite{bishop1997gtm}, kernel principal component analysis (PCA)
\cite{scholkopf1998nonlinear}, Gaussian process latent variable model
\cite{lawrence2004gaussian}, diffeomorphic dimensionality reduction
\cite{walder2009diffeomorphic}, and autoencoders \cite{hinton2006reducing}.
In principle, \GalerkinName\ and \LSPGName\ projection could be applied with
manifolds constructed by any of the methods in the latter category.  However,
this study restricts focus to autoencoders---more specifically deep
convolutional autoencoders---due to their expressiveness and scalability, as well as the availability of
high-performance software tools for their construction.

Autoencoders (also known as auto-associators \cite{demers1993non}) comprise a
specific type of feedforward neural network that aim to learn the identity
mapping: they attempt to copy the input to an accurate approximation of
itself.  Learning the identity mapping is not a particularly useful task unless,
however, it associates with a dimensionality-reduction procedure comprising
data compression and subsequent recovery. This is precisely what autoencoders
accomplish by employing a neural-network architecture consisting of two parts:
an \textit{encoder} that provides a nonlinear mapping from the
high-dimensional input to a low-dimensional embedding, and a \textit{decoder}
that provides a nonlinear mapping from the low-dimensional embedding to an
approximation of the high-dimensional input. Convolutional autoencoders are a
specific type of autoencoder that employ convolutional layers, which have been
shown to be effective for extracting representative features in images
\cite{masci2011stacked}.
Inspired by the analogy \OWN{between} images and spatially
distributed dynamical-system states (e.g., when the dynamical
system corresponds to the spatial discretization of a
partial-differential-equations model), we propose a specific deep
convolutional autoencoder architecture tailored to dynamical systems with
states that are spatially distributed.  Critically, training this autoencoder
requires only the same snapshot data as POD; no additional problem-specific
information is needed.

In summary, new contributions of this work include:
\begin{enumerate} 
	\item \GalerkinNameCap\ (Section \ref{sec:Galerkin}) and \LSPGName\
		(Section \ref{sec:LSPG}) projection techniques, which project
	the dynamical-system model onto arbitrary continuously-differentiable manifolds.
		We equip these methods with 
	 \begin{enumerate} 
		 \item the ability to exactly satisfy the initial condition (Remark
			 \ref{rem:IC}), and
			  \item quasi-Newton solvers (Section
		\ref{sec:quasi}) to solve the system of algebraic equations
			 arising from implicit time integration.
					 \end{enumerate}
	\item Analysis (Section \ref{sec:analysis}), which includes
\begin{enumerate} 
\item demonstrating that employing an affine trial manifold recovers classical
	linear-subspace Galerkin and LSPG projection  (Proposition
		\ref{thm:affine}),
\item sufficient conditions for commutativity of time discretization and
	\GalerkinName\ projection (Theorem \ref{prop:comm})\OWN{,}
\item conditions under which \GalerkinName\ and \LSPGName\ projection are
	equivalent (Theorem \ref{thm:equivalence})\OWN{, and}
\item \OWN{\textit{a posteriori} discrete-time error bounds for both
the manifold Galerkin and manifold LSPG projection methods
		(Theorem \ref{thm:error_bound}).}
\end{enumerate}
\item A novel convolutional autoencoder architecture tailored to spatially
	distributed dynamical-system states (Section \ref{sec:autoencoder}) with
		accompanying offline training algorithm that requires only the same
		snapshot data as POD (Section \ref{sec:offline}).
	\item  Numerical experiments on advection-dominated benchmark problems
		(Section \ref{sec:numexp}). These experiments illustrate the ability of
		the method to outperform even the projection of the solution onto the
		optimal linear subspace; further, the proposed method is close to achieving the
		optimal performance of any nonlinear-manifold method. This demonstrates
		the method's ability to overcome the intrinsic $n$-width limitations of
		linear trial subspaces.  
\end{enumerate}
We note that the methodology is applicable to both linear and nonlinear
dynamical systems.

To the best of our knowledge, Refs.\
\cite{hartman2017deep,kashima2016nonlinear} comprise the only attempts to
incorporate an autoencoder within a projection-based ROM.  These methods seek
solutions in the nonlinear trial manifold provided by an autoencoder; however,
these methods reduce the number of equations by applying the encoder to the
velocity.  Unfortunately, as we discuss in Remark \ref{rem:badGal}, this
approach is \textit{kinematically inconsistent}, as the velocity resides in
the tangent space to the manifold, not the manifold itself. Thus, encoding the
velocity can produce significant approximation errors. Instead, the proposed
\GalerkinName\ and \LSPGName\ projection methods produce approximations that
associate with minimum-residual formulations and adhere to the kinematics
imposed by the trial manifold.

Relatedly, Ref.~\cite{gu2011model} proposes a general framework for projection
of dynamical systems onto nonlinear manifolds. However, the proposed method
constructs a piecewise linear trial manifold by generating local linear
subspaces and concatenating those subspaces. Then, the method projects the
residual of the governing equations onto a nonlinear test manifold that is
also piecewise linear; this is referred to as a `piecewise linear projection
function'. Thus, the approach is limited to piecewise-linear manifolds, and 
the resulting approximation does not associate with any optimality property.

We also note that autoencoders have been applied to various non-intrusive
model-reduction methods that are purely data driven in nature and are not
based on a projection process. Examples include \OWN{Ref.~\cite{milano2002neural}, 
which constructs a nonlinear model of wall turbulence using an autoencoder;}
Ref.~\cite{gonzalez18}, which
applies an autoencoder to compress the state, followed by a recurrent neural
network (RNN) \cite{rumelhart1986learning} to learn the dynamics;
Refs.~\cite{takeishi2017learning,otto2017linearly,lusch2017deep,morton2018deep},
which apply autoencoders to learn approximate invariant subspaces of the
Koopman operator; and Ref.~\cite{carlberg2018recovering}, which applies
hierarchical dimensionality reduction comprising autoencoders and PCA followed
by dynamics learning to recover missing CFD data.

The paper is organized as follows. Section \ref{sec:fom} describes the
full-order model, which corresponds to a parameterized system of (linear or
nonlinear) ordinary
differential equations. Section \ref{sec:rom} describes model reduction on
nonlinear manifolds, including the mathematical characterization of the
nonlinear trial manifold (Section \ref{sec:nonlinearTrialManifold}),
\GalerkinName\ projection (Section \ref{sec:Galerkin}), \LSPGName\ projection
(Section \ref{sec:LSPG}), and associated quasi-Newton methods to solve the
system of algebraic equations arising at each time instance in the case of
implicit time integration (Section \ref{sec:quasi}).  
\OWN{Section \ref{sec:analysis} provides the aforementioned analysis results.}
Section
\ref{sec:manifold} describes a practical approach for constructing the
nonlinear trial manifold using deep convolutional autoencoders, including a
brief description of autoencoders (Section \ref{sec:autoenc}), the proposed
autoencoder architecture applicable to spatially distributed states (Section
\ref{sec:autoencoder}), and the way in which the proposed autoencoder can be
used to satisfy the initial condition (Section \ref{sec:initialCondition}).
When the \GalerkinName\ and \LSPGName\ ROMs employ this choice of
decoder, we refer to them as \textit{\GalerkinNameDeepCap}\ and
\textit{\LSPGNameDeepCap}\ ROMs, respectively.  Section \ref{sec:offline}
describes offline training, which entails snapshot-based data collection
(Section \ref{sec:snapshots}),  data standardization (Section
\ref{sec:standardization}), and autoencoder training (Section
\ref{sec:grad_learning}); Algorithm \ref{alg:ae_train} summarizes the offline
training stage.  Section \ref{sec:numexp} assesses the
performance of the proposed \GalerkinNameDeep\ and \LSPGNameDeep\ ROMs compared
to (linear-subspace) POD--Galerkin and POD--LSPG ROMs on two advection-dominated benchmark
problems. Finally, Section \ref{sec:conc} concludes the paper.

\section{Full-order model}\label{sec:fom}
This work considers the full-order model (FOM) to correspond to a
dynamical system expressed as a parameterized
system of ordinary differential equations
(ODEs)
\begin{equation}\label{eq:govern_eq}
	\difftime{\states} = \velo(\states,t;\param), \qquad \states(0;\param) = \initstate(\param),
\end{equation}
where $t \in [0, T]$ denotes time with final time $T \in \mathbb{R}_{+}$, and
$\states: [0, T] \times \paramspace  \rightarrow \mathbb{R}^{\dofhf}$  denotes
the time-dependent, parameterized state implicitly defined as the solution to
problem \eqref{eq:govern_eq} given the parameters $\param \in \paramspace$.
Here, $\paramspace\subseteq\RR{\nparam}$ denotes the parameter space,
$\initstate : \paramspace \rightarrow \mathbb{R}^{\dofhf}$ denotes the
parameterized initial condition, and $\velo: \mathbb{R}^{\dofhf} \times [0,
T] \times \paramspace \rightarrow \mathbb{R}^{\dofhf}$ denotes the velocity,
which may be linear or nonlinear in its first argument. We denote differentiation of a
variable $x$ with
respect to time by $\dot x$. Such dynamical systems
may arise from the semidiscretization of a partial-differential-equations (PDE)
model, for example. We refer to Eq.~\eqref{eq:govern_eq} as the FOM ODE.

Numerically solving the FOM ODE~\eqref{eq:govern_eq} requires application of a
time-discretization method. For simplicity, this work restricts attention to
linear multistep methods; see Ref.~\cite{carlbergGalDiscOpt} for analysis of
both (linear-subspace) Galerkin and LSPG reduced-order models with 
Runge--Kutta schemes. A linear $k$-step method applied to numerically solve
the FOM ODE \eqref{eq:govern_eq} leads to solving the system of 
algebraic equations
\begin{align}\label{eq:fomODeltaE}
\res^{n}(\state^{n};\param) = \zero,\quad n=1,\ldots,\nseq,
\end{align}
where the time-discrete residual $\res^{n}:
\RR{\nstate}\times\paramspace\rightarrow\RR{\nstate}$ is defined as
\begin{align}\label{eq:disc_res}
\res^{n}:(\dstate;\gparam) \mapsto \alpha_0 \dstate - \Delta t \beta_0
	\velo(\dstate,t^n;\gparam) + \sum_{j=1}^{k} \alpha_j \states^{n-j}
	- \Delta t \sum_{j=1}^{k} \beta_j \velo (\states^{n-j}, t^{n-j}; \gparam).
\end{align}
Here, 
$\timestep \in \mathbb{R}_{+}$ denotes the time step, $\states^{k}$
denotes the numerical approximation to $\states(k\timestep; \param)$, and the
coefficients $\alpha_j$ and $\beta_j$, $j=0,\ldots, k$ with
$\sum_{j=0}^k\alpha_j=0$ define a particular linear multistep
scheme. These methods are implicit if $\beta_0\neq 0$. For notational simplicity, we have assumed a uniform time
step $\timestep$ and a fixed number of steps $k$ for each time instance;
the \RT{extensions} to nonuniform grids and a non-constant value of $k$ are
straightforward. 
We refer to Eq.~\eqref{eq:fomODeltaE} as the FOM O$\Delta$E.
\begin{remark}[Intrinsic solution-manifold
	dimensionality]\label{rem:intrinsic}
	Assuming the initial value problem \eqref{eq:fomODeltaE} has a unique
	solution for each parameter instance $\param\in\paramspace$, the
	intrinsic dimensionality of the solution manifold
	$\{\state(t;\param)\;|\;t\in[0,T],\ \param\in\paramspace\}$ is (at most)
	$\intrinsicDim=\nparam+1$, as the mapping $(t;\param)\mapsto \state$
	is unique in this case. This provides a practical lower bound on the
	dimension of a nonlinear trial manifold for exactly representing the
	dynamical-system state.
\end{remark}

\section{Model reduction on nonlinear manifolds}\label{sec:rom}

This section proposes two classes of residual-minimizing ROMs on nonlinear
manifolds. The first minimizes the (time-continuous) FOM ODE residual and is
analogous to classical Galerkin projection, while the second minimizes the
(time-discrete) FOM O$\Delta$E residual and is analogous to least-squares
Petrov--Galerkin (LSPG) projection
\cite{carlberg2011efficient,carlberg2013gnat,carlbergGalDiscOpt}.  Section
\ref{sec:nonlinearTrialManifold} introduces the notion of a nonlinear trial
manifold, Section \ref{sec:Galerkin} describes the \GalerkinName\ ROM resulting from
time-continuous residual minimization, and Section \ref{sec:LSPG} describes
the \LSPGName\ ROM resulting from time-discrete residual minimization. 

\subsection{Nonlinear trial manifold}\label{sec:nonlinearTrialManifold}
We begin by prescribing the subset of the original state space $\RR{\ndof}$ on
which the ROM techniques will seek approximate solutions. Rather than introducing a
classical affine
\textit{trial subspace} for this purpose, we instead introduce a continuously
differentiable nonlinear \textit{trial
manifold}. In particular, we seek approximate solutions $\aprxstate
\approx \states$ of the form
\begin{equation}\label{eq:state_approx}
	\aprxstate(t; \param) = \stateRef(\param)+ \rdbasisnl(\rdstate(t;\param)),
\end{equation}
where
$\aprxstate:\RRplus\times\paramspace\rightarrow \RO{\stateRef(\param)+} \manifold$ and
$\manifold \defeq\{
\rdbasisnl(\grdstate)\,|\,
\grdstate\in\RR{\dofrom}\}$
denotes the nonlinear trial manifold \OWN{from the extrinsic view}.
Here, 
$\stateRef:\paramspace\rightarrow\RR{\ndof}$ denotes a parameterized  reference
state and
$\rdbasisnl:\drdstate\mapsto\rdbasisnl(\drdstate)$
with
$\rdbasisnl: \mathbb{R}^{\dofrom} \rightarrow \mathbb{R}^{\dofhf}$ and
$\dofrom \leq \dofhf$ denotes the \OWN{parameterization function, which we
refer to in this work as the} \textit{decoder}, which comprises a
nonlinear mapping from low-dimensional
generalized coordinates $\rdstate:\RRplus\times\paramspace\rightarrow\RR{\dofrom}$ to the high-dimensional state approximation.
From application of the chain rule, the approximated velocity is then
\begin{equation}\label{eq:vel_approx}
	\difftime{\aprxstate}(t; \param) = \jacrdbasisnl(\rdstate(t;\param))
	\difftime{\rdstate}(t;\param),
\end{equation}
where $\jacrdbasisnl:\grdstate\mapsto \frac{d\rdbasisnl}{d
\drdstate}(\grdstate)$ with
$\jacrdbasisnl:\RR{\nstatered}\rightarrow\mathbb{R}^{\dofhf \times \dofrom}$
denotes the Jacobian of the decoder. This Jacobian defines the tangent space
to the manifold 
\begin{equation}\label{eq:tan_space}
	\tangentSpace{\rdstate} \equiv \range(\jacrdbasisnl(\rdstate)),
\end{equation}
where
$\range(\bm{A})$ denotes the range of matrix $\bm{A}$. From Remark
\ref{rem:intrinsic}, we observe that the nonlinear trial manifold dimension
must be greater than or equal to the intrinsic solution-manifold dimension,
i.e., $\dofrom\geq\intrinsicDim$, to be able to exactly \RO{represent} the
state.

\begin{remark}[Initial-condition satisfaction]\label{rem:IC}
Satisfaction of the initial conditions requires
	the initial generalized coordinates
	$\rdstate(0;\param)=\initrdstate(\param)$	to satisfy
	$\rdbasisnl(\initrdstate(\param))= \initstate(\param)$. This can be achieved
	for any choice of $\initrdstate(\param)$ provided the reference state is set
	to 
	\begin{equation}\label{eq:refStateIC}
	\stateRef(\param)= \initstate(\param)-\rdbasisnl(\initrdstate(\param)).
	\end{equation}
	However, doing so must ensure that the decoder can accurately
	represent deviations from this reference state \eqref{eq:refStateIC}. In the
	context of the proposed autoencoder-based trial manifold, Section
	\ref{sec:initialCondition} describes a strategy for computing the initial
	generalized coordinates $\initrdstate(\param)$ and defining training data
	for manifold construction such that the decoder accomplishes this.
\end{remark}

\begin{remark}[Linear trial subspace]
	The classical linear (or more precisely \textit{affine}) trial subspace corresponds to a specific case of
a nonlinear trial manifold; this occurs when the decoder is linear. In this
case, the decoder can be expressed as
	$\rdbasisnl:\drdstate\mapsto \rdbasis\drdstate$, where $\rdbasis \in
\RRstar{\dofhf \times \dofrom}$ denotes trial-basis matrix and
$\RRstar{n\times m}$ denotes the set of full-column-rank $n\times m$ matrices
(the non-compact Stiefel manifold). In this case, the approximated state and
velocity can be
expressed as
\begin{equation}
\aprxstate(t; \param) = \initstate(\param) + \rdbasis
	\rdstate(t;\param)\quad\text{and}\quad
	\difftime{\aprxstate}(t; \param) = \rdbasis
	\difftime{\rdstate}(t;\param),
\end{equation}
respectively,
such that the trial manifold is affine, i.e.,
$\manifold = \RO{\range(\rdbasis)}$, and the decoder
Jacobian is the constant matrix
$\jacrdbasisnl(\rdstate(t;\param)) = \jacrdbasisnl= \rdbasis$.
Note that this approach can enforce the initial condition by setting the initial
	generalized coordinates 
	$\initrdstate(\param)$ to zero and subsequently setting
	$\stateRef(\param)=\initstate(\param)$.
Common choices for computing the reduced-basis matrix $\rdbasis$ when the velocity is
linear in its first argument include balanced truncation \cite{moore1981principal},
rational interpolation \cite{baur2011interpolatory,GugercinIRKA}, the reduced-basis method
\cite{prud2002reliable,rozza2007reduced}, Rayleigh--Ritz eigenvectors
\cite{craig1968coupling}; common choices when the velocity is nonlinear
include POD \cite{POD} and balanced POD \cite{lall2002sab,willcox2002bmr,rowley2005mrf}.
\end{remark}

\subsection{\GalerkinNameCap\ ROM: time-continuous residual minimization}\label{sec:Galerkin}

We now derive the ROM corresponding to time-continuous residual minimization.
To do so, define the FOM ODE residual as
\begin{equation*}
\res:(\bm{v}, \dstate, \tau; \gparam) \mapsto \bm{v} - \velo(\dstate,\tau;\gparam)
\end{equation*}
with
$\res:\RR{\ndof}\times\RR{\ndof}\times\RRplus\times\paramspace\rightarrow\RR{\ndof}$.
It can be easily verified that the FOM ODE~\eqref{eq:govern_eq} is equivalent to 
\begin{equation}\label{eq:govern_eqRes}
	\res\left(\difftime{\states}, \states,t;\param\right)=\zero, \qquad \states(0;\param) = \initstate(\param).
\end{equation}

To derive the time-continuous residual-minimizing ROM, we substitute the
approximated state $\states \leftarrow \aprxstate$ defined in
Eq.~\eqref{eq:state_approx} and velocity $\difftime{\states} \leftarrow
\difftime{\aprxstate}$ defined in Eq.~\eqref{eq:vel_approx} into the FOM ODE
\eqref{eq:govern_eqRes} and minimize the $\ell^2$-norm of the residual,
i.e.,
\begin{equation}\label{eq:cont_opt_problem}
	\difftime{\rdstate}(t;\param)  =
	\underset{\gvelo\in\RR{\nstatered}}{\arg\min} \left
	\| \res\left(\jacrdbasisnl(\rdstate(t;\param)) \gvelo,
	\stateRef(\param) + \rdbasisnl(\rdstate(t;\param)), t;\param \right)
	\right\|_2^2
\end{equation}
with initial condition $\rdstate(0;\param)=\initrdstate(\param)$.
If the
Jacobian $\jacrdbasisnl(\rdstate(t;\param))$ has full column rank, then
Problem \eqref{eq:cont_opt_problem} is convex and has a unique solution that 
leads to the \textit{\GalerkinName} ODE
\begin{align} \label{eq:cont_rom}
	\difftime{\rdstate}(t;\param) =
	\jacrdbasisnl(\rdstate(t;\param))\pseudoinv
	\velo\left(\stateRef(\param)+\rdbasisnl(\rdstate(t;\param)), t; \param
	\right),\quad\rdstate(0;\param)=\initrdstate(\param),
\end{align}
where the superscript $\pseudoinvSymb$ denotes the Moore--Penrose
pseudoinverse.  

We note that although the state evolves on the nonlinear
manifold $\manifold$, the generalized coordinates evolve in the Euclidean
space $\RR{\nstatered}$, which facilitates time integration of the
\GalerkinName\ ODE \eqref{eq:cont_rom}.
Indeed, this can be accomplished using any
time integrator. If a linear multistep scheme is employed, then the
\GalerkinName\ O$\Delta$E corresponds to 
\begin{equation}\label{eq:GalerkinODeltaE}
\rdresn(\rdstate^n; \param) = \zero,
\end{equation}
where the \GalerkinName\ O$\Delta$E residual is
\begin{align}\label{eq:time_cont_res}
	\begin{split}
		\rdres^n:(\grdstate; \gparam) \mapsto &\alpha_0 \grdstate - \Delta t
		\beta_0 \jacrdbasisnl(\grdstate)\pseudoinv \velo(
		\stateRef(\gparam)+\rdbasisnl(\grdstate),t^n;\gparam) +\\
		&\sum_{j=1}^{k} \alpha_j \rdstate^{n-j} - \Delta t \sum_{j=1}^{k} \beta_j
		\jacrdbasisnl(\rdstate^{n-j})\pseudoinv \velo(\stateRef(\gparam)+\rdbasisnl(\rdstate^{n-j}),t^{n-j};\gparam).
	\end{split}
\end{align}
We note that the \GalerkinName\ O$\Delta$E is nonlinear if either the velocity
is nonlinear in its first argument or if the trial manifold is nonlinear.
\begin{remark}[\GalerkinNameCap\ projection as orthogonal projection onto the
	tangent space]\label{rem:tanSpace}
	Eq.~\eqref{eq:cont_opt_problem} can be written equivalently as 
\begin{equation}\label{eq:cont_opt_problem_vel}
	\difftime{\aprxstate}(t; \param)  =
	\underset{\bm{v}\in\tangentSpace{\rdstate(t;\param)}}{\arg\min} \left
	\| \bm{v} - \velo
	\left(
	\stateRef(\param) + \rdbasisnl(\rdstate(t;\param)), t;\param \right)
	\right\|_2^2,
\end{equation}
	where the tangent space is defined in Eq.~\eqref{eq:tan_space}.
	The solution to this minimization problem is $\difftime{\aprxstate}(t; \param)=
	\jacrdbasisnl(\rdstate(t;\param))\jacrdbasisnl(\rdstate(t;\param))\pseudoinv
	\velo \left( \stateRef(\param) + \rdbasisnl(\rdstate(t;\param)), t;\param
	\right) $. Thus, \GalerkinNameCap\ projection can be interpreted as
	computing the orthogonal (i.e.,
	$\ell^2$-optimal) projection  of the velocity onto
	the tangent space 
	$\tangentSpace{\rdstate(t;\param)}$
	of the trial manifold.
\end{remark}
\begin{remark}[Weighted $\ell^2$-norms for \GalerkinName\
	projection]\label{rem:normGal}
	We note that the minimization problem \eqref{eq:cont_opt_problem} could be
	posed in other norms. For example, if we consider residual minimization in
	the weighted $\ell^2$-norm
	$\|\cdot\|_\metric$ satisfying
	$\|\bm{x}\|_\metric\equiv(\bm{x},\bm{x})_\metric$ with
	$(\bm{x},\bm{y})_\metric\equiv \sqrt{\bm{x}^T\metric\bm{y}}$ 
	and $\metric\equiv\metricHalfT\metricHalf$, then Problem
	\eqref{eq:cont_opt_problem} becomes
\begin{equation}\label{eq:cont_opt_problem_metric}
	\difftime{\rdstate}(t;\param)  =
	\underset{\gvelo\in\RR{\nstatered}}{\arg\min} \left
	\| \res\left(\jacrdbasisnl(\rdstate(t;\param)) \gvelo,
	\stateRef(\param) + \rdbasisnl(\rdstate(t;\param)), t;\param \right)
	\right\|_\metric^2,
\end{equation}
leading to the \GalerkinName\ ODE
\begin{align} \label{eq:cont_rom_metric}
	\difftime{\rdstate}(t;\param) =
	[\metricHalf\jacrdbasisnl(\rdstate(t;\param))]\pseudoinv
	\metricHalf\velo\left(\stateRef(\param)+\rdbasisnl(\rdstate(t;\param)), t; \param
	\right),\quad\rdstate(0;\param)=\initrdstate(\param),
\end{align}
and \GalerkinName\ O$\Delta$E residual
\begin{align}\label{eq:GalerkinODeltaE_metric}
\rdres^n:(\grdstate; \gparam) \mapsto &\alpha_0 \grdstate - \Delta t
		\beta_0 [\metricHalf\jacrdbasisnl(\grdstate)]\pseudoinv \metricHalf\velo(
		\stateRef(\gparam)+\rdbasisnl(\grdstate),t^n;\gparam) +\\
		&\sum_{j=1}^{k} \alpha_j \rdstate^{n-j} - \Delta t \sum_{j=1}^{k} \beta_j
		[\metricHalf\jacrdbasisnl(\rdstate^{n-j})]\pseudoinv \metricHalf\velo(\stateRef(\gparam)+\rdbasisnl(\rdstate^{n-j}),t^{n-j};\gparam).
\end{align}
For notational simplicity, this work restricts attention to the $\ell^2$-norm;
future work will consider such weighted norms, e.g., to enable
	hyper-reduction.
\end{remark}

\begin{remark}[Alternative Galerkin projection]\label{rem:badGal}
	Refs.~\cite{hartman2017deep,kashima2016nonlinear} provide an alternative way
	to perform Galerkin projection on nonlinear manifolds constructed from
	autoencoders. In contrast to the proposed \GalerkinName\ ROM (and
	\LSPGName\ ROM introduced in Section \ref{sec:LSPG} below), these methods additionally
	require the existence of an \textit{encoder}
	$\rdbasisnlEnc: \mathbb{R}^{\dofhf}\rightarrow\mathbb{R}^{\dofrom}$ that
	satisfies $\rdbasisnl\circ \rdbasisnlEnc:\state\mapsto\tilde\state$ with
	$\tilde\state\approx\state$, which is trained on state-snapshot data. These
	methods 
	then form the low-dimensional system of ODEs
\begin{align} \label{eq:cont_rom_dumb}
	\difftime{\rdstate}(t;\param) =
	\rdbasisnlEnc\left(
	\velo\left(\stateRef(\param)+\rdbasisnl(\rdstate(t;\param)), t; \param
	\right)\right),\quad\rdstate(0;\param)=\rdbasisnlEnc(\initstate(\param)).
\end{align}
One problem with this approach is that it implicitly assumes that the velocity
$\velo$ can be represented on the manifold used to represent the state.
	This is \textit{kinematically inconsistent}, as the velocity resides in the
	tangent space to this manifold as derived in
	Eq.~\eqref{eq:vel_approx}, i.e.,
	$\difftime{\aprxstate}(t; \param)	\in
	\tangentSpace{\rdstate(t;\param)}$; the tangent space and the
	manifold coincide if and only if the trial manifold associates with a linear
	trial subspace.
	Thus, encoding the velocity is likely to produce
	a poor approximation of the reduced-state velocity 
	$\difftime{\rdstate}(t;\param)$.
	Instead, the proposed
	\GalerkinName\ ROM performs orthogonal projection of the velocity onto the
	tangent space (see Remark \ref{rem:tanSpace}).  We also note the approaches
	proposed in Refs.~\cite{hartman2017deep,kashima2016nonlinear} compute the
	initial state by encoding the initial condition, which is not guaranteed to
	satisfy the initial conditions as discussed in Remark \ref{rem:IC}.
\end{remark}

\subsection{\LSPGNameCap\ ROM: time-discrete residual minimization}\label{sec:LSPG}
Analogously to the previous section, we now derive the ROM corresponding to
time-discrete residual minimization. To do so, we simply substitute the
approximated state $\states \leftarrow \aprxstate$ defined in
Eq.~\eqref{eq:state_approx} into the FOM
O$\Delta$E \eqref{eq:fomODeltaE}
and minimize the $\ell^2$-norm of the residual, i.e.,
\begin{equation}\label{eq:disc_opt_problem}
	\rdstate^n(\param)  =
	\underset{\grdstate\in\RR{\nstatered}}{\arg\min} \left\|
	\res^{n}\left(
	\stateRef(\param)+\rdbasisnl(\grdstate);\param)\right)\right\|_2^2,
\end{equation}
which is solved sequentially for $n=1,\ldots,\nseq$ with initial condition
$\rdstate(0;\param)=\initrdstate(\param)$.
Necessary optimality conditions for Problem~\eqref{eq:disc_opt_problem}
correspond to stationarity of the objective function, i.e., the solution
$\rdstate^n$ satisfies the \LSPGName\ O$\Delta$E
\begin{align}\label{eq:lspg_proj}
	\rdtestbasis^{n}(\rdstate^{n};\param)\tran
	\res^{n}\left(
	\stateRef(\param)+\rdbasisnl(\rdstate^n);\param\right) = \zero,
\end{align}
where the test basis matrix
$\rdtestbasis^{n}:\RR{\nstatered}\times\paramspace\rightarrow\RR{\nstate\times\nstatered}$
is defined as
\begin{align}\label{eq:testBasisMatDef}
\rdtestbasis^{n}:(\grdstate;\gparam) &\mapsto \frac{\partial \res^n}{\partial
	\dstate}(
	\stateRef(\gparam)+\rdbasisnl(\grdstate);\gparam)\jacrdbasisnl(\grdstate)= \left( \alpha_0
	\identity - \Delta t \beta_0 \frac{\partial \velo}{\partial \dstate}
	\left( \stateRef(\gparam)+\rdbasisnl(\grdstate), t^n;\gparam \right) \right) \jacrdbasisnl (\grdstate).
\end{align}
As with the \GalerkinName\ O$\Delta$E, the \LSPGName\ O$\Delta$E is nonlinear
if either the velocity is nonlinear in its first argument or if the trial
manifold is nonlinear.
\begin{remark}[Weighted $\ell^2$-norms for \LSPGName\
	projection]\label{rem:normLSPG}
	As discussed in Remark \ref{rem:normGal}, the minimization problem
	in \eqref{eq:disc_opt_problem} could be posed in other norms. If the
	weighted $\ell^2$-norm $\|\cdot\|_\metric$ is employed, then Problem 
	\eqref{eq:disc_opt_problem} becomes
\begin{equation}\label{eq:disc_opt_problem_norm}
	\rdstate^n(\param)  =
	\underset{\grdstate\in\RR{\nstatered}}{\arg\min} \left\|
	\res^{n}\left(
	\stateRef(\param)+\rdbasisnl(\grdstate);\param)\right)\right\|_\metric^2,
\end{equation}
	and the test basis matrix in 
	the \LSPGName\ O$\Delta$E\eqref{eq:lspg_proj} becomes
\begin{align*}
\rdtestbasis^{n}:(\grdstate;\gparam) &\mapsto \metric\frac{\partial \res^n}{\partial
	\dstate}(
	\stateRef(\gparam)+\rdbasisnl(\grdstate);\gparam)\jacrdbasisnl(\grdstate).
\end{align*}
As with \GalerkinName\ projection, this work restricts attention to the
	$\ell^2$-norm for notational simplicity.
\end{remark}

\begin{remark}[Application to stationary problems]
	\LSPGNameCap\ projection can also be applied to stationary (i.e.,
	steady-state) problems.
	Stationary problems are characterized by computing $\states(\param)$ as the
	implicit solution to Eq.~\eqref{eq:govern_eq} with
	$\difftime{\states}=\zero$, i.e.,
\begin{equation}\label{eq:govern_eq_steady}
	\velo(\states;\param)=\zero,
\end{equation}
	where the (time-independent) velocity is equivalent to the stationary-problem
	residual. Substituting the approximated state
	$\state(\param)\leftarrow\aprxstate(\param) = \stateRef(\param)+
	\rdbasisnl(\rdstate(\param)) $ 
	into Eq.~\eqref{eq:govern_eq_steady}	
	and subsequently minimizing the $\ell^2$-norm
	of the residual yields
\begin{equation}\label{eq:disc_opt_problem_steady}
	\rdstate(\param)  =
	\underset{\grdstate\in\RR{\nstatered}}{\arg\min} \left\|
	\velo\left(
	\stateRef(\param)+\rdbasisnl(\grdstate);\param)\right)\right\|_2^2,
\end{equation}
which has the same nonlinear-least-squares form as \LSPGName\ projection applied to the dynamical
	system model \eqref{eq:disc_opt_problem}. In this case,  necessary
	optimality conditions for Problem \eqref{eq:disc_opt_problem_steady}
	correspond to stationarity of the objective function, i.e., the solution
	$\rdstate(\param)$ satisfies the system of algebraic
	equations 
\begin{align}\label{eq:lspg_proj_steady}
	\rdtestbasis(\rdstate(\param);\param)\tran
	\velo\left(
	\stateRef(\param)+\rdbasisnl(\rdstate(\param));\param\right) = \zero,
\end{align}
where the test basis matrix
$\rdtestbasis:\RR{\nstatered}\times\paramspace\rightarrow\RR{\nstate\times\nstatered}$
is defined as
\begin{align*}
\rdtestbasis:(\grdstate;\gparam) &\mapsto
\frac{\partial \velo}{\partial \dstate}
	\left( \stateRef(\gparam)+\rdbasisnl(\grdstate);\gparam \right)	
\jacrdbasisnl (\grdstate).
\end{align*}
Note that we do not consider a \GalerkinName\ projection applied to stationary problems,
	as the \GalerkinName\ ROM was derived by minimizing the time-continuous
	residual, and a Galerkin-like projection  of the form $ \jacrdbasisnl
	(\rdstate(\param))\tran \velo\left(
	\stateRef(\param)+\rdbasisnl(\rdstate(\param));\param\right) = \zero $ does
	not generally associate with any optimality property.
\end{remark}

\subsection{Quasi-Newton methods for implicit integrators}\label{sec:quasi}
When an implicit time integrator is employed such that $\beta_0\neq 0$ and the
trial manifold is nonlinear, then solving the \GalerkinName\ O$\Delta$E
\eqref{eq:time_cont_res} and \LSPGName\ O$\Delta$E \eqref{eq:lspg_proj} using
Newton's method is challenging, as the residual Jacobians involve high-order
derivatives of the decoder. For this purpose, we propose quasi-Newton methods that
approximate these Jacobians while retaining convergence of the nonlinear
solver to the solution of the O$\Delta$Es under certain conditions.

\textbf{\GalerkinNameCap}. We first consider the \GalerkinName\ case. 
If an implicit integrator is employed (i.e., $\beta_0\neq 0$), then the \GalerkinName\
	O$\Delta$E \eqref{eq:GalerkinODeltaE} corresponds to a system of algebraic
	equations. Applying Newton's method to solve this
	system can be challenging for nonlinear decoders, as the Jacobian of the
	residual requires computing a third-order tensor of second derivatives in
	this case. Indeed, the $i$th column of the Jacobian is
\begin{align*}
	\frac{\partial \rdresn}{\partial \drdstateArg{i}}: (\krdstate;\gparam)\mapsto \alpha_0
	\unitVecRedArg{i} - \Delta t \beta_0 \left( \frac{\partial \jacrdbasisnl
	\pseudoinv}{\partial \drdstateArg{i}}\left(\krdstate\right)\velo(
	\stateRef(\gparam)+\rdbasisnl(\krdstate), t^n;\gparam)  + \jacrdbasisnl(\krdstate)\pseudoinv
	\frac{\partial \velo }{\partial \dstate}(
	\stateRef(\gparam)+\rdbasisnl(\krdstate), t^n;\gparam) \jacrdbasisnl(\krdstate)\unitVecRedArg{i}\right)
\end{align*}
	for $i=1,\ldots,\nstatered$, where
	$\unitVecRedArg{i}\in\{0,1\}^{\nstatered}$ denotes the $i$th
	canonical unit vector. The gradient of the
	pseudo-inverse of the decoder Jacobian, which appears in the second term of
	the right-hand side, can be computed from the gradients of this
	Jacobian as
 \begin{align*} 
 \begin{split} 
\frac{\partial\jacrdbasisnl\pseudoinv}{\partial
	 \drdstateArg{i}}\left(\krdstate\right)
	 =&-\jacrdbasisnl(\krdstate)\pseudoinv
\frac{\partial\jacrdbasisnl}{\partial \drdstateArg{i}}\left(\krdstate\right)
\jacrdbasisnl(\krdstate)\pseudoinv
	 +\jacrdbasisnl(\krdstate)\pseudoinv\jacrdbasisnl(\krdstate)^{\pseudoinvSymb\,T}
\frac{\partial\jacrdbasisnl^T}{\partial \drdstateArg{i}}\left(\krdstate\right)
	 (\identity - \jacrdbasisnl(\krdstate)\jacrdbasisnl(\krdstate)\pseudoinv)\\
	 &+ 
	 (\identity- \jacrdbasisnl(\krdstate)\pseudoinv \jacrdbasisnl(\krdstate))
\frac{\partial\jacrdbasisnl^T}{\partial \drdstateArg{i}}\left(\krdstate\right)
	 \jacrdbasisnl(\krdstate)^{\pseudoinvSymb\,T}\jacrdbasisnl(\krdstate)\pseudoinv
	  \end{split} 
	  \end{align*} 
		The primary difficulty of computing this term is the requirement of
		computing second derivatives of the form
		$
		{\partial
	\jacrdbasisnl}/{\partial
	\drdstateArg{i}}=
		{\partial^2\rdbasisnl}/{\partial \drdstate\partial
		\drdstateArg{i}}$. For this reason, we propose to
		approximate the residual Jacobian as $\jacrdresApprox\approx{\partial
		\rdres}/{\partial \drdstate}
	$ by neglecting this term such that
\begin{align*}
	\jacrdresApprox: (\krdstate;\gparam)\mapsto \alpha_0
	\identity - \Delta t \beta_0  \jacrdbasisnl(\krdstate)\pseudoinv
	\frac{\partial \velo }{\partial \dstate}(
	\stateRef(\gparam)+\rdbasisnl(\krdstate), t^n;\gparam) \jacrdbasisnl(\krdstate)
\end{align*}
	The resulting quasi-Newton method to solve the \GalerkinName\ O$\Delta$E
	\eqref{eq:GalerkinODeltaE} takes the form
\begin{align} \label{eq:quasiNewtonGal}
\begin{split} 
	&\jacrdresApprox(\rdstateNK;\param)\searchDirNK =
	-\rdres(\rdstateNK;\param)\\
	&\rdstateNKp = \rdstateNK + \linesearchNK\searchDirNK,
\end{split} 
\end{align} 
	for $k=0,\ldots,K$. Here, $\rdstateNZero$ is the initial guess (often taken
	to be $\rdstate^{n-1}$ or a data-driven extrapolation
	\cite{carlberg2015decreasing}), and $\linesearchNK\in\RR{}$ is the step
	length.
	Convergence of the quasi-Newton iterations 
	\eqref{eq:quasiNewtonGal} to a root of  $\rdres(\cdot;\param)$
	is ensured if	$$\rdres(\rdstateNK;\param)^T
	\jacrdresApprox(\rdstateNK;\param)^T \frac{\partial \rdresn}{\partial
	\drdstate} (\rdstateNK;\param) \rdres(\rdstateNK;\param) > 0,\quad
	k=0,\ldots,K $$ and
	$\linesearchNK\in\RR{}$, $k=0,\ldots,K$ is computed to satisfy the strong
	Wolfe conditions, as this ensures that the search direction $\searchDirNK$
	is a descent direction of the function $\| \rdres(\grdstate; \gparam)
	\|_2^2$ evaluated at $(\grdstate; \gparam)=(\rdstateNK;\param)$ for
	$k=0,\ldots,K$
	\cite{martinez2000practical}.
	
\textbf{\LSPGNameCap}. We now consider the \LSPGName\ case. As in previous
works that considered applying LSPG on affine trial subspaces
\cite{carlberg2011efficient,carlberg2013gnat,carlbergGalDiscOpt}, we propose
to solve the nonlinear least-squares problem \eqref{eq:disc_opt_problem} using
the Gauss--Newton method, which leads to the iterations
\begin{align} \label{eq:GaussNewton}
\begin{split} 
	& \rdtestbasis^{n}(\rdstateNK;\param)\tran
	\rdtestbasis^{n}(\rdstateNK;\param) \searchDirNK =
	-
	\rdtestbasis^{n}(\rdstateNK;\param)\tran
\res^{n}\left(
	\stateRef(\param)+\rdbasisnl(\rdstateNK);\param\right)\\
	&\rdstateNKp = \rdstateNK + \linesearchNK\searchDirNK,
\end{split} 
\end{align} 
	for $k=0,\ldots,K$ with $\rdstateNZero$ a provided initial guess, and
	$\linesearchNK\in\RR{}$ is a step length chosen to satisfy the strong Wolfe
	conditions, which ensure global convergence to a local minimum of the
	\LSPGName\ objective function in \eqref{eq:disc_opt_problem} that satisfies
	the stationarity conditions associated with the LSPG O$\Delta$E
	\eqref{eq:lspg_proj}.
	
	To show the connection between the Gauss--Newton method and the quasi-Newton
	approach proposed for the \GalerkinName\ method, we define the discrete residual
	associated with \LSPGName\ projection as
\begin{equation} 
	\rdresLSPGn:(\grdstate;\gparam)\mapsto\rdtestbasis^{n}(\grdstate;\gparam)\tran
	\res^{n}\left(
	\stateRef(\gparam)+\rdbasisnl(\grdstate);\gparam\right)
\end{equation} 
such that the \LSPGName\ O$\Delta$E \eqref{eq:lspg_proj} can be expressed
simply as the system of algebraic equations
\begin{align}\label{eq:lspg_projsimple}
	\rdresLSPGn(\rdstate^n;\param)=\zero.
\end{align}
Solving Eq.~\eqref{eq:lspg_projsimple} using Newton's method requires
computing the residual Jacobian, whose $i$th column is
\begin{align*}
	\frac{\partial \rdresLSPGn}{\partial \drdstateArg{i}}: (\krdstate;\gparam)&\mapsto 
	\frac{\partial\rdtestbasis^{n}}{\partial\drdstateArg{i}}(\krdstate;\gparam)\tran
	\res^{n}\left(
	\stateRef(\gparam)+\rdbasisnl(\krdstate);\gparam\right)
	+
\rdtestbasis^{n}(\krdstate;\gparam)\tran
	\rdtestbasis^{n}(\krdstate;\gparam)\unitVecRedArg{i}\\
	&=
\frac{\partial \jacrdbasisnl
	}{\partial \drdstateArg{i}}\left(\krdstate\right)\tran
	\frac{\partial \res^{n}}{\partial \dstate}\left(
	\stateRef(\gparam)+\rdbasisnl(\krdstate);\gparam\right)\tran
\res^{n}\left(
	\stateRef(\gparam)+\rdbasisnl(\krdstate);\gparam\right)
+
\rdtestbasis^{n}(\krdstate;\gparam)\tran
	\rdtestbasis^{n}(\krdstate;\gparam)\unitVecRedArg{i}.
\end{align*}
Thus, it is clear that the Gauss--Newton iterations \eqref{eq:GaussNewton} are
equivalent to employing a quasi-Newton method to solve the \LSPGName\
O$\Delta$E \eqref{eq:lspg_projsimple} with an approximated 
residual Jacobian 
$\jacrdresLSPGApprox\approx{\partial
		\rdresLSPGn}/{\partial \drdstate}$ defined as
$\jacrdresLSPGApprox:(\krdstate;\gparam) \mapsto \rdtestbasis^{n}(\krdstate;\gparam)\tran
	\rdtestbasis^{n}(\krdstate;\gparam)$, which is obtained from neglecting the
	term containing second derivatives of the decoder, namely ${\partial
	\jacrdbasisnl}/{\partial
	\drdstateArg{i}}={\partial^2\rdbasisnl}/{\partial \drdstate\partial
		\drdstateArg{i}}$.

\section{Analysis}\label{sec:analysis}
We now perform analysis of the proposed \GalerkinName\ and \LSPGName\
projection methods. For notational simplicity, this section omits the
dependence of operators on parameters $\param$.
\begin{proposition}[Affine trial manifold recovers classical Galerkin and
	LSPG]\label{thm:affine}
	If the trial manifold is affine,
	then \LSPGName\ projection is equivalent to classical linear-subspace LSPG projection. 
	If additionally the decoder mapping associates with an orthgonal matrix,
	then \GalerkinName\ projection is equivalent to classical linear-subspace Galerkin
	projection.
	\begin{proof}
	If the trial manifold is affine, then the decoder
		can be expressed as the linear mapping
		\begin{equation} \label{eq:linearDecoder}
			\rdbasisnl:\drdstate\mapsto\rdbasis
		\drdstate 
\end{equation} 
		with $\rdbasis\in\RR{\nstate\times\nstatered}$.\\
		\noindent \underline{\GalerkinNameCap}.
		Substituting the linear decoder \eqref{eq:linearDecoder} into the
		\GalerkinName\ ODE \eqref{eq:cont_rom} yields
\begin{align} \label{eq:cont_romGalOne}
	\difftime{\rdstate}(t\paramSemiProof) =
	\rdbasis\pseudoinv
	\velo\left(\stateRef\paramParProof + \rdbasis\rdstate(t\paramSemiProof),
	t\paramSemiProof
	\right),\quad \rdstate(0\paramSemiProof)=\zero.
\end{align}
		Similarly, substituting the linear decoder \eqref{eq:linearDecoder}
		into
the \GalerkinName\ O$\Delta$E Eq.~\eqref{eq:GalerkinODeltaE} yields the same
		system, but with the residual defined as
		\begin{equation} \label{eq:galModResidual}
			\rdres^n:\grdstate\gparamSemiProof \mapsto\rdbasis\pseudoinv \res^{n}
			(\stateRef\gparamParProof +
	 \rdbasis \grdstate^n\gparamSemiProof).
	  \end{equation} 
		If additionally the trial-basis matrix is orthogonal, i.e.,
		then 
		$\rdbasis\pseudoinv = \rdbasis\tran$ and the ODE \eqref{eq:cont_romGalOne}
		is equivalent to the
		classical Galerkin ODE
\begin{align} \label{eq:cont_romGal}
	\difftime{\rdstate}(t\paramSemiProof) =
	\rdbasis\tran
	\velo\left(\stateRef\paramParProof + \rdbasis\rdstate(t\paramSemiProof),
	t\paramSemiProof
	\right),\quad \rdstate(0\paramSemiProof)=\zero,
\end{align}
		while the O$\Delta$E residual \eqref{eq:galModResidual} is equivalent to the
		classical Galerkin O$\Delta$E residual
 \begin{equation} 
	 \rdres^n:\grdstate\gparamSemiProof \mapsto\rdbasis\tran \res^{n}
	 (\stateRef\gparamParProof +
	 \rdbasis \grdstate^n\gparamSemiProof).
	  \end{equation} 
		\noindent \underline{\LSPGNameCap}.
Substituting the linear decoder \eqref{eq:linearDecoder} into the \LSPGName\
		O$\Delta$E \eqref{eq:lspg_proj} yields the classical LSPG O$\Delta$E
\begin{align}\label{eq:lspg_projLinear}
	\rdtestbasis^{n}(\rdstate^{n}\paramSemiProof)\tran
	\res^{n}\left(\stateRef\paramParProof +
	\rdbasis\rdstate^n\paramSemiProof\right) = \zero
\end{align}
with classical LSPG test basis 
\begin{align*}
\rdtestbasis^{n}:\grdstate\gparamSemiProof &\mapsto 
	\left( \alpha_0
	\identity - \Delta t \beta_0 \frac{\partial \velo}{\partial \dstate}
	\left( \stateRef\gparamParProof + \rdbasis\grdstate, t^n\gparamParProof \right) \right)
	\rdbasis.
\end{align*}

	\end{proof}
\end{proposition}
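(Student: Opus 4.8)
The plan is to obtain both equivalences by direct substitution: identify an affine trial manifold with a linear decoder and insert it into the \GalerkinName\ and \LSPGName\ equations derived in Sections~\ref{sec:Galerkin}--\ref{sec:LSPG}. First I would record the structural consequence of the hypothesis: a trial manifold is affine exactly when the decoder is the linear map $\rdbasisnl:\drdstate\mapsto\rdbasis\drdstate$ for some full-column-rank $\rdbasis\in\RR{\nstate\times\nstatered}$, in which case $\manifold=\range(\rdbasis)$, the decoder Jacobian is the constant matrix $\jacrdbasisnl(\grdstate)\equiv\rdbasis$, and (to satisfy the initial condition, as in the remark on linear trial subspaces) one may take $\initrdstate=\zero$ and $\stateRef=\initstate$. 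Two elementary facts then carry the argument: $\rdbasis\pseudoinv\rdbasis=\identity$ whenever $\rdbasis$ has full column rank, and $\rdbasis\pseudoinv=\rdbasis\tran$ when the columns of $\rdbasis$ are additionally orthonormal, i.e., $\rdbasis\tran\rdbasis=\identity$.

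For the \LSPGName\ claim I would substitute $\rdbasisnl(\grdstate)=\rdbasis\grdstate$ into the minimization problem~\eqref{eq:disc_opt_problem}, which collapses to $\rdstate^n=\arg\min_{\grdstate\in\RR{\nstatered}}\|\res^n(\stateRef+\rdbasis\grdstate)\|_2^2$---exactly the defining problem of classical linear-subspace LSPG over the affine trial space $\stateRef+\range(\rdbasis)$. Equivalently, substituting into the test-basis definition~\eqref{eq:testBasisMatDef} replaces its $\jacrdbasisnl(\grdstate)$ factor by the constant matrix $\rdbasis$, so the \LSPGName\ O$\Delta$E~\eqref{eq:lspg_proj} becomes the classical LSPG O$\Delta$E and the Gauss--Newton iterations of Section~\ref{sec:quasi} reduce to the classical ones; full column rank of $\rdbasis$ is all that is needed here, and no orthogonality hypothesis enters.

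For the \GalerkinName\ claim I would substitute the linear decoder into the \GalerkinName\ ODE~\eqref{eq:cont_rom}, obtaining $\difftime{\rdstate}(t)=\rdbasis\pseudoinv\velo(\stateRef+\rdbasis\rdstate(t),t)$, and then use $\rdbasis\pseudoinv=\rdbasis\tran$ (from orthonormality of the columns of $\rdbasis$) to recover the classical Galerkin ODE. The same substitution into the \GalerkinName\ O$\Delta$E residual~\eqref{eq:time_cont_res} yields the classical Galerkin O$\Delta$E residual $\rdbasis\tran\res^n(\stateRef+\rdbasis\grdstate)$; the only step that is not a pure substitution is reconciling the reduced-coordinate history terms $\sum_{j=1}^k\alpha_j\rdstate^{n-j}$ of~\eqref{eq:time_cont_res} with the corresponding terms of $\rdbasis\tran\res^n(\stateRef+\rdbasis\grdstate)$, which requires $\rdbasis\pseudoinv\rdbasis=\identity$ together with the linear-multistep identity $\sum_{j=0}^k\alpha_j=0$ to cancel the spurious $\rdbasis\tran\stateRef$ contributions.

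I do not expect a genuine obstacle; the argument is essentially bookkeeping. The two points demanding care are (i) invoking the right hypothesis in the right place---full column rank of $\rdbasis$ already suffices for the LSPG equivalence and for $\rdbasis\pseudoinv\rdbasis=\identity$, whereas orthonormality of the columns of $\rdbasis$ is precisely what collapses the Moore--Penrose pseudoinverse in the \GalerkinName\ ODE to a transpose---and (ii) using $\sum_{j=0}^k\alpha_j=0$ when matching the O$\Delta$E residuals in the \GalerkinName\ case.
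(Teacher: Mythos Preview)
Your proposal is correct and follows essentially the same direct-substitution approach as the paper: write the affine decoder as $\rdbasisnl:\drdstate\mapsto\rdbasis\drdstate$ so that $\jacrdbasisnl\equiv\rdbasis$, plug into the \LSPGName\ O$\Delta$E/test basis and into the \GalerkinName\ ODE and O$\Delta$E, and invoke $\rdbasis\pseudoinv=\rdbasis\tran$ under orthonormality. If anything, your treatment is slightly more careful than the paper's, which simply asserts the form~\eqref{eq:galModResidual} without spelling out that the reference-state contributions $\sum_{j=0}^k\alpha_j\,\rdbasis\pseudoinv\stateRef$ vanish via $\sum_{j=0}^k\alpha_j=0$; you correctly flag this as the one non-trivial bookkeeping step.
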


The \GalerkinName\ O$\Delta$E
\eqref{eq:GalerkinODeltaE}--\eqref{eq:time_cont_res} was derived using a
project-then-discretize approach, as we formulated the residual-minimization
problem \eqref{eq:cont_opt_problem} at the time-continuous level, and
subsequently discretized the equivalent ODE \eqref{eq:cont_rom} in time. We
now derive conditions under which an equivalent model could have been derived
using a discretize-then-project approach, i.e., by substituting the
approximated state $\states \leftarrow \aprxstate$ defined in
Eq.~\eqref{eq:state_approx} into the FOM O$\Delta$E \eqref{eq:fomODeltaE} and
subsequently premultiplying the overdetermined system by a test basis.

\begin{theorem}[Commutativity of time discretization and \GalerkinName\
	projection]\label{prop:comm}
For \GalerkinName\ projection, time discretization and projection are commutative if 
	either (1) the 
	trial manifold corresponds to an affine subspace, or (2) 
	the nonlinear trial manifold is
	twice continuously differentiable;
	$\|\rdstate^{n-j} - \rdstate^{n}\| = \bigO(\Delta t)$ for all
	$n\innatno{\ntime}$, $j\innatno{k}$; and
	the limit
	$\Delta t\rightarrow 0$ is taken.
\end{theorem}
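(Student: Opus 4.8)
The plan is to confront the two constructions directly. The project-then-discretize route has already produced the \GalerkinName\ O$\Delta$E residual $\rdresn$ of Eq.~\eqref{eq:time_cont_res}. The discretize-then-project route instead substitutes $\aprxstate=\stateRef+\rdbasisnl(\grdstate)$ (with the history states $\states^{n-j}$ replaced by $\stateRef+\rdbasisnl(\rdstate^{n-j})$) into the FOM O$\Delta$E~\eqref{eq:fomODeltaE} and premultiplies the resulting overdetermined system by a Galerkin test basis; the natural choice is $\jacrdbasisnl(\grdstate)\pseudoinv$, since this is the premultiplier that makes the projected system square and that collapses to $\rdbasis\pseudoinv$ in the affine case (any further left-multiplication by an invertible matrix, e.g.\ $\jacrdbasisnl(\grdstate)\tran$, defines the same root). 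This yields $\bm{r}^{n}_{\mathrm{DP}}:\grdstate\mapsto\jacrdbasisnl(\grdstate)\pseudoinv\,\res^{n}(\stateRef+\rdbasisnl(\grdstate))$, and ``commutativity'' means that $\rdresn$ and $\bm{r}^{n}_{\mathrm{DP}}$ define the same $\rdstate^{n}$: exactly under (1), and in the $\Delta t\to 0$ limit under (2).

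\emph{Case (1).} With $\rdbasisnl:\grdstate\mapsto\rdbasis\grdstate$ and $\jacrdbasisnl\equiv\rdbasis$ of full column rank, $\rdbasis\pseudoinv\rdbasis=\identity$. Expanding $\bm{r}^{n}_{\mathrm{DP}}$, using $\rdbasis\pseudoinv\rdbasis\grdstate=\grdstate$ and $\rdbasis\pseudoinv\rdbasis\rdstate^{n-j}=\rdstate^{n-j}$, and using $\sum_{j=0}^{k}\alpha_j=0$ to annihilate the reference-state contribution $\big(\sum_{j}\alpha_j\big)\rdbasis\pseudoinv\stateRef$, reduces $\bm{r}^{n}_{\mathrm{DP}}$ term by term to $\rdresn$; this is exactly the computation in the proof of Proposition~\ref{thm:affine}.

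\emph{Case (2).} I would form $\bm{r}^{n}_{\mathrm{DP}}(\grdstate)-\rdresn(\grdstate)$ and show it is $\bigO(\Delta t^{2})$ near the root. The current-time velocity terms $\mp\Delta t\beta_0\jacrdbasisnl(\grdstate)\pseudoinv\velo(\stateRef+\rdbasisnl(\grdstate),t^{n})$ are identical and cancel. Two pieces remain. The \emph{state} piece, after $\sum_{j}\alpha_j=0$ removes $\stateRef$, is $\jacrdbasisnl(\grdstate)\pseudoinv\big[\alpha_0\rdbasisnl(\grdstate)+\sum_{j=1}^{k}\alpha_j\rdbasisnl(\rdstate^{n-j})\big]-\big[\alpha_0\grdstate+\sum_{j=1}^{k}\alpha_j\rdstate^{n-j}\big]$; a second-order Taylor expansion of the twice continuously differentiable decoder about $\grdstate$ with $\|\rdstate^{n-j}-\grdstate\|=\bigO(\Delta t)$ gives $\rdbasisnl(\rdstate^{n-j})=\rdbasisnl(\grdstate)+\jacrdbasisnl(\grdstate)(\rdstate^{n-j}-\grdstate)+\bigO(\Delta t^{2})$, and summing against $\alpha_j$, using $\sum_{j}\alpha_j=0$ once more and then $\jacrdbasisnl(\grdstate)\pseudoinv\jacrdbasisnl(\grdstate)=\identity$ (valid because the decoder Jacobian has full column rank), collapses this piece to $\bigO(\Delta t^{2})$. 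The \emph{history-velocity} piece is $-\Delta t\sum_{j=1}^{k}\beta_j\big[\jacrdbasisnl(\grdstate)\pseudoinv-\jacrdbasisnl(\rdstate^{n-j})\pseudoinv\big]\velo(\stateRef+\rdbasisnl(\rdstate^{n-j}),t^{n-j})$; since $\rdbasisnl\in C^{2}$, the map $\grdstate\mapsto\jacrdbasisnl(\grdstate)\pseudoinv$ is $C^{1}$ on the full-rank set, so the bracket is $\bigO(\|\rdstate^{n-j}-\grdstate\|)=\bigO(\Delta t)$ and, with the explicit $\Delta t$ and the bounded velocity, is $\bigO(\Delta t^{2})$. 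Hence $\bm{r}^{n}_{\mathrm{DP}}(\grdstate)-\rdresn(\grdstate)=\bigO(\Delta t^{2})$; since each residual is itself $\bigO(\Delta t)$ near the root (again by $\sum_{j}\alpha_j=0$ and the small-increment hypothesis), the discrepancy is one order higher than the scheme's truncation error, so the two O$\Delta$Es coincide as $\Delta t\to 0$.

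The main obstacle I expect is not a single estimate but the careful bookkeeping around the pseudoinverse: establishing $\jacrdbasisnl(\grdstate)\pseudoinv\jacrdbasisnl(\grdstate)=\identity$ and the $C^{1}$ dependence of $\jacrdbasisnl(\cdot)\pseudoinv$ on its argument—both resting on the standing full-column-rank assumption on the decoder Jacobian—and then pinning down the precise sense in which ``time discretization and \GalerkinName\ projection commute in the limit,'' namely that the two O$\Delta$E residuals agree up to terms one order higher in $\Delta t$ than the truncation error. A secondary point needing care is verifying that the velocity and its arguments stay bounded along the trajectory, so that the $\bigO$ constants can be taken uniformly in $n$.
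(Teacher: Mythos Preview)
Your proposal is correct and reaches the same conclusion, but the route differs from the paper's in a meaningful way. You fix the discretize-then-project test operator up front to be $\jacrdbasisnl(\grdstate)\pseudoinv$ and then, in Case~(2), estimate the difference $\bm{r}^{n}_{\mathrm{DP}}-\rdresn$ directly, showing it is $\bigO(\Delta t^{2})$ via second-order Taylor expansion of $\rdbasisnl$ and $C^{1}$ dependence of $\jacrdbasisnl(\cdot)\pseudoinv$. The paper instead leaves the test basis $\testBasisGal(\grdstate)$ as a free parameter, writes the equivalence condition $\rdresn=\constantSymbGal\,\rdresDisc^n$ for some invertible $\constantSymbGal$, and matches terms; in Case~(1) this yields that affinity of the decoder is actually \emph{necessary} for exact (non-asymptotic) equivalence and characterizes all admissible $\testBasisGal$ (e.g.\ $\testBasisGal=[\rdbasis\pseudoinv]\tran$ or $\testBasisGal=\scalarConstant\rdbasis$), while in Case~(2) it Taylor-expands about $\rdstate^{n}$ and arrives at the condition $\jacrdbasisnl(\rdstate^n)\pseudoinv=[\testBasisGal^T\jacrdbasisnl(\rdstate^n)]^{-1}\testBasisGal^T$, which your choice $\testBasisGal=[\jacrdbasisnl(\rdstate^{n})\pseudoinv]\tran$ satisfies. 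Your argument is more elementary and yields a quantitative $\bigO(\Delta t^{2})$ discrepancy rate; the paper's buys a full characterization of the test bases that achieve commutativity and, in the non-asymptotic case, the converse (affine is required, not just sufficient).
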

\begin{proof}
	 The discrete residual
characterizing	
	the \GalerkinName\ O$\Delta$E \eqref{eq:GalerkinODeltaE}, which
	was derived using a project-then-discretize approach,
	 is defined as $\rdres^n$ in Eq.~\eqref{eq:time_cont_res}.\\
\noindent Substituting the approximated state $\states \leftarrow \aprxstate$ defined in
	Eq.~\eqref{eq:state_approx} into the FOM O$\Delta$E \eqref{eq:fomODeltaE}
	yields
$\res^{n}(\stateRef+\rdbasisnl(\rdstate^n)\paramSemiProof) =
	\zero$.
	This is an overdetermined system of equations and thus may not have a
	solution. As such, we perform projection by enforcing orthgonality of this
	residual to a test basis $\testBasisGal:\RR{\nstatered}\rightarrow\RR{\nstate\times\nstatered}$ such
	that the discretize-then-project \GalerkinName\ O$\Delta$E corresponds to 
	$\rdresDisc^n(\rdstate^n)=\zero$ with 
 \begin{equation} 
	 \rdresDisc^n:\grdstate\mapsto\testBasisGal(\grdstate)^T\res^{n}(\stateRef+\rdbasisnl(\grdstate)\paramSemiProof).
	  \end{equation} 
	The discretize-then-project \GalerkinName\ ROM is equivalent to the proposed
	\GalerkinName\ ROM if and only if there exists a full-rank matrix
	$\constantSymbGal:\RR{\nstatered}\timesparamspaceProof\rightarrow\RRstar{\nstatered\times\nstatered}$
	such that
	\begin{equation}\label{eq:matchingGal}
\rdres^n(\grdstate\gparamSemiProof)=\constantAnalysisGal\rdresDisc^n(\grdstate\gparamSemiProof).
	\end{equation}
	\noindent \textit{Case 1.} 	We now consider the general (non-asymptotic)
	case.  
	Eq.~\eqref{eq:matchingGal} holds for any sequence of approximate
	solutions $\rdstate^n$, $n=1,\ldots,\ntime$ if and only if
	each term in this expansion matches, i.e.,
		\begin{align}\label{eq:requirementOneGal}
		\begin{split}
			\grdstate=&
			\constantAnalysisGal\testBasisTransposeAnalysisGal\rdbasisnl(\grdstate)\\
 \jacrdbasisnl(\grdstate)\pseudoinv \velo(
			\stateRef+\rdbasisnl(\grdstate),t^n\gparamSemiProof)=&
			\constantAnalysisGal\testBasisTransposeAnalysisGal\velo(
	 \stateRef+\rdbasisnl(\grdstate),t^n\gparamSemiProof)\\
			\rdstate^{n-j}=&\constantAnalysisGal\testBasisTransposeAnalysisGal\rdbasisnl(\rdstate^{n-j})\\
\jacrdbasisnl(\rdstate^{n-j})\pseudoinv \velo(
			\stateRef+\rdbasisnl(\rdstate^{n-j}),t^{n-j}\gparamSemiProof)=&
			\constantAnalysisGal\testBasisTransposeAnalysisGal \velo (
	 \stateRef+\rdbasisnl(\rdstate^{n-j}), t^{n-j}\gparamSemiProof),
			\end{split}
		\end{align}
where we have used $\sum_{j=0}^k\alpha_j=0$.
	We first consider the first and third conditions of
	\eqref{eq:requirementOneGal}.
	Because the inverse of a nonlinear operator---if it
		exists---must also be nonlinear, a necessary condition for these two
		requirements is that the trial manifold corresponds is affine
		such that the decoder satisfies \eqref{eq:linearDecoder}.
		Substituting Eq.~\eqref{eq:linearDecoder} into the first and third of
		the conditions of \eqref{eq:requirementOneGal} yields
		\begin{align}\label{eq:requirementTwoGal}
		\begin{split}
			\grdstate=&
			\constantAnalysisGal\testBasisTransposeAnalysisGal\rdbasis \grdstate \\
			\rdstate^{n-j}=&\constantAnalysisGal\testBasisTransposeAnalysisGal\rdbasis \rdstate^{n-j}
			\end{split}
		\end{align}
		for $j=1,\ldots,k$. A necessary and sufficient conditions for these to
		hold is
\begin{equation} \label{eq:constantDefGal}
	\constantAnalysisGal= \constantDefAnalysisGal.
\end{equation} 
We now consider the second and fourth conditions of
\eqref{eq:requirementOneGal}.
Substituting Eqs.~\eqref{eq:linearDecoder} and \eqref{eq:constantDefGal} into
these conditions yields
		\begin{align}\label{eq:requirementOneLinearGal}
		\begin{split}
 \rdbasis\pseudoinv \velo(\stateRef\gparamParProof +
			\rdbasis\grdstate,t^n\gparamSemiProof)=&
			\constantDefAnalysisGal\testBasisTransposeAnalysisGal\velo(\stateRef\gparamParProof +
	 \rdbasis\grdstate,t^n\gparamSemiProof)\\
\rdbasis\pseudoinv \velo(\stateRef\gparamParProof +
			\rdbasis\rdstate^{n-j},t^{n-j}\gparamSemiProof)=&
			\constantDefAnalysisGal\testBasisTransposeAnalysisGal \velo (\stateRef\gparamParProof +
	 \rdbasis\rdstate^{n-j}, t^{n-j}\gparamSemiProof)
			\end{split}
		\end{align}
		for $j=1,\ldots,k$. Noting that $\rdbasis\pseudoinv =
		[\rdbasis\tran\rdbasis]^{-1}\rdbasis\tran$, a necessary and sufficient
		condition for \eqref{eq:requirementOneLinearGal} to hold is 
		$
\rdbasis\pseudoinv = 
\constantDefAnalysisGal\testBasisTransposeAnalysisGal.
$
Because $\rdbasis$ has no dependence on the generalized state, the test basis
must also be independent of the generalized state, which yields
\begin{equation}\label{eq:finalEquivCondition}
	\rdbasis\pseudoinv =
	[\testBasisAnalysisGalNo^T\rdbasis]^{-1}\testBasisAnalysisGalNo^T.
\end{equation}
Thus, without using any asymptotic arguments, the proposed \GalerkinName\ ROM
could have been derived using a discretize-then-project approach if and only
if the test basis $\testBasisAnalysisGalNo$ applied to the overdetermined FOM
O$\Delta$E acting on the trial manifold satisfies
Eq.~\eqref{eq:finalEquivCondition}.
Examples of test bases that satisfy this condition include
$\testBasisAnalysisGalNo = [\rdbasis\pseudoinv]\tran$ and
$\testBasisAnalysisGalNo = \scalarConstant\rdbasis$ for some nonzero scalar
$\scalarConstant\in\RR{}$.
\\
\textit{Case 2.} We now consider asymptotic arguments.
	Assuming the nonlinear trial manifold is
	twice continuously differentiable, and
	$\|\rdstate^{n-j} - \rdstate^{n}\| = \bigO(\Delta t)$, then for $\grdstate$
	in a neighborhood of $\rdstate^n$ such that 
	$\|\grdstate - \rdstate^{n}\| = \bigO(\Delta t)$, we have
\begin{align} \label{eq:limiting}
	\begin{split}
		\jacrdbasisnl(\rdstate^{n-j})\pseudoinv &=
		\jacrdbasisnl(\rdstate^{n})\pseudoinv + \bigO(\Delta t)\\
	\jacrdbasisnl(\grdstate)\pseudoinv &=
		\jacrdbasisnl(\rdstate^{n})\pseudoinv + \bigO(\Delta t)\\
	\rdbasisnl(\rdstate^{n-j}) &= \rdbasisnl(\rdstate^{n}) + \jacrdbasisnl(\rdstate^{n})[
\rdstate^{n-j}-\rdstate^{n}
	]
	+ \bigO(\Delta t)\\
	\rdbasisnl(\grdstate) &= \rdbasisnl(\rdstate^{n}) + \jacrdbasisnl(\rdstate^{n})[
\grdstate-\rdstate^{n}
	]
	+ \bigO(\Delta t).
	\end{split}
\end{align} 
Substituting these expressions into the definition of the
discretize-then-project \GalerkinName\ residual $\rdresDisc^n$ and enforcing
equivalence of each term in  the matching conditions
\eqref{eq:matchingGal} and taking the limit $\Delta t\rightarrow 0$ yields
		\begin{align}\label{eq:requirementOneGalAsymp}
		\begin{split}
			\grdstate=&
			\constantAnalysisGal\testBasisTransposeAnalysisGal
			\jacrdbasisnl(\rdstate^{n})
			\grdstate\\
 \jacrdbasisnl(\rdstate^n)\pseudoinv \velo(
			\stateRef+\rdbasisnl(\grdstate),t^n\gparamSemiProof)=&
			\constantAnalysisGal\testBasisTransposeAnalysisGal\velo(
	 \stateRef+\rdbasisnl(\grdstate),t^n\gparamSemiProof)\\
			\rdstate^{n-j}=&\constantAnalysisGal\testBasisTransposeAnalysisGal
			\jacrdbasisnl(\rdstate^{n})
			\rdstate^{n-j}\\
\jacrdbasisnl(\rdstate^{n})\pseudoinv \velo(
			\stateRef+\rdbasisnl(\rdstate^{n-j}),t^{n-j}\gparamSemiProof)=&
			\constantAnalysisGal\testBasisTransposeAnalysisGal \velo (
	 \stateRef+\rdbasisnl(\rdstate^{n-j}), t^{n-j}\gparamSemiProof),
			\end{split}
		\end{align}
		where we have used $\sum_{j=0}^k\alpha_j=0$.
A necessary and sufficient condition for the first and
third conditions to hold is
\begin{equation} 
\constantAnalysisGal = 
	[\testBasisTransposeAnalysisGal
			\jacrdbasisnl(\rdstate^{n})]^{-1}.
	\end{equation} 
	Then, a necessary and sufficient condition for the second and fourth
	conditions to hold is
	 \begin{equation} \label{eq:finalEquivConditionAsy}
\jacrdbasisnl(\rdstate^n)\pseudoinv = 
[\testBasisTransposeAnalysisGal
			\jacrdbasisnl(\rdstate^{n})]^{-1}
\testBasisTransposeAnalysisGal.
		  \end{equation} 
Examples of test bases that satisfy Eq.~\eqref{eq:finalEquivConditionAsy}
include 
$\testBasisAnalysisGal = [\jacrdbasisnl(\rdstate^{n})\pseudoinv]\tran$ and 
$\testBasisAnalysisGalNo = \scalarConstant\jacrdbasisnl(\rdstate^{n})$ for
some nonzero scalar
		$\scalarConstant\in\RR{}$.
\end{proof}
Theorem \ref{prop:comm} shows that \GalerkinName\ can be derived using a
discretize-then-project approach for nonlinear trial manifolds. This shows
that previous analysis \cite[Theorem 3.4]{carlbergGalDiscOpt} related to
commutativity of (residual-minimizing) Galerkin projection and time
discretization extends to the case of nonlinear trial manifolds.

We now show that the limiting equivalence result reported in \cite[Section
5]{carlbergGalDiscOpt} between Galerkin and LSPG projection for linear
subspaces also extends to nonlinear trial manifolds.

\begin{theorem}[Limiting equivalence]\label{thm:equivalence}
	\GalerkinNameCap\ and \LSPGName\ are equivalent if 
	either (1) the trial
	manifold corresponds to an affine subspace
	and either an explicit scheme is
	employed or the limit $\Delta t\rightarrow 0$ is taken, or (2) the
	nonlinear manifold is twice continuously differentiable; $\|\rdstate^{n-j} -
	\rdstate^{n}\| = \bigO(\Delta t)$ for all $n\innatno{\ntime}$,
	$j\innatno{k}$; and the limit $\Delta t\rightarrow 0$ is taken.
\end{theorem}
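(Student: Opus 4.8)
The plan is to show that, under the stated hypotheses, the \GalerkinName\ O$\Delta$E residual \eqref{eq:time_cont_res} and the \LSPGName\ O$\Delta$E residual \eqref{eq:lspg_proj} characterize the same reduced solution at each time step, by exhibiting an invertible matrix $\constantSymb$ relating the two residuals. I would handle the two cases separately but with the same underlying idea.

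\emph{Case 1 (affine manifold).} With the linear decoder \eqref{eq:linearDecoder}, Proposition \ref{thm:affine} already reduces both methods to their classical linear-subspace counterparts: \GalerkinName\ becomes the classical Galerkin O$\Delta$E with residual $\rdbasis\pseudoinv\res^n(\stateRef+\rdbasis\grdstate)$, and \LSPGName\ becomes the classical LSPG O$\Delta$E with test basis $\rdtestbasis^n = (\alpha_0\identity - \Delta t\beta_0\,\partial\velo/\partial\dstate)\rdbasis$. For an explicit scheme $\beta_0 = 0$, so $\rdtestbasis^n = \alpha_0\rdbasis$ and the LSPG stationarity condition is $\alpha_0\rdbasis\tran\res^n = \zero$, which up to the nonzero factor $\alpha_0$ (recall $\sum_j\alpha_j = 0$ and $\alpha_0 \neq 0$ for a consistent scheme) is exactly the classical Galerkin condition $\rdbasis\tran\res^n = \zero$ — here I would additionally note that the pseudoinverse $\rdbasis\pseudoinv = [\rdbasis\tran\rdbasis]^{-1}\rdbasis\tran$ differs from $\rdbasis\tran$ only by the invertible left factor $[\rdbasis\tran\rdbasis]^{-1}$, so the residuals agree up to an invertible transformation and define the same root. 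For the limit $\Delta t\to 0$, the $\Delta t\beta_0$ term in the LSPG test basis vanishes asymptotically, reducing to the same situation. This reproduces \cite[Section 5]{carlbergGalDiscOpt}.

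\emph{Case 2 (nonlinear manifold, asymptotic).} Here I would reuse the asymptotic expansions \eqref{eq:limiting} that were established in the proof of Theorem \ref{prop:comm}: twice continuous differentiability of the decoder together with $\|\rdstate^{n-j} - \rdstate^n\| = \bigO(\Delta t)$ gives $\jacrdbasisnl(\rdstate^{n-j})\pseudoinv = \jacrdbasisnl(\rdstate^n)\pseudoinv + \bigO(\Delta t)$. In the \GalerkinName\ O$\Delta$E residual \eqref{eq:time_cont_res}, every decoder-Jacobian-pseudoinverse factor can therefore be replaced by the common value $\jacrdbasisnl(\rdstate^n)\pseudoinv$ up to $\bigO(\Delta t)$, so that the whole residual equals $\jacrdbasisnl(\rdstate^n)\pseudoinv\res^n(\stateRef + \rdbasisnl(\grdstate))$ plus terms that vanish with $\Delta t$ (using $\sum_j\alpha_j = 0$ to cancel the leading $\grdstate$-independent pieces, exactly as in \eqref{eq:requirementOneGalAsymp}). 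On the \LSPGName\ side, the test basis \eqref{eq:testBasisMatDef} satisfies $\rdtestbasis^n(\grdstate) = \alpha_0\jacrdbasisnl(\rdstate^n) + \bigO(\Delta t)$, so the LSPG stationarity condition $\rdtestbasis^n\tran\res^n = \zero$ becomes $\alpha_0\jacrdbasisnl(\rdstate^n)\tran\res^n(\stateRef + \rdbasisnl(\grdstate)) = \zero$ in the limit. Since $\jacrdbasisnl(\rdstate^n)\pseudoinv = [\jacrdbasisnl(\rdstate^n)\tran\jacrdbasisnl(\rdstate^n)]^{-1}\jacrdbasisnl(\rdstate^n)\tran$ and the bracketed Gram matrix is invertible (full column rank of the decoder Jacobian), the \GalerkinName\ residual equals an invertible matrix times the \LSPGName\ residual in the limit, so both enforce $\jacrdbasisnl(\rdstate^n)\tran\res^n = \zero$ and yield the same reduced solution. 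This is precisely the $\constantSymb = [\testBasisTransposeAnalysis\jacrdbasisnl(\rdstate^n)]^{-1}$ type relation identified in Case 2 of Theorem \ref{prop:comm}, with the LSPG test basis playing the role of $\testBasisAnalysisGalNo$.

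\emph{Main obstacle.} The delicate point is making the ``$\Delta t\to 0$'' equivalence statement precise: the two O$\Delta$E systems are not literally identical for finite $\Delta t$, so I would phrase the conclusion as the two residual operators agreeing up to $\bigO(\Delta t)$ after left-multiplication by an invertible, $\bigO(1)$ matrix — hence the discrete solutions coincide in the limit — and I would lean on the proof of Theorem \ref{prop:comm} (which already carried out the analogous expansion and the $\sum_j\alpha_j = 0$ cancellations) to avoid redoing that bookkeeping. The other point requiring a word of care is the full-column-rank assumption on $\jacrdbasisnl(\rdstate^n)$, needed both for the pseudoinverse identity and for well-posedness of the Gauss--Newton/quasi-Newton iterations; I would state it explicitly as a standing assumption inherited from Section \ref{sec:Galerkin}.
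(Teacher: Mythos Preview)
Your proposal is correct and follows essentially the same route as the paper: establish equivalence by relating the two O$\Delta$E residuals via an invertible matrix, reuse the asymptotic expansions \eqref{eq:limiting} from Theorem~\ref{prop:comm} in Case~2 to reduce the LSPG test basis to $\alpha_0\jacrdbasisnl(\rdstate^n)$ in the limit, and then invoke the pseudoinverse identity $\jacrdbasisnl\pseudoinv = [\jacrdbasisnl\tran\jacrdbasisnl]^{-1}\jacrdbasisnl\tran$ to close the argument. Your Case~1 is slightly more streamlined than the paper's---you invoke Proposition~\ref{thm:affine} directly and then appeal to the known classical equivalence, whereas the paper re-derives the term-by-term matching conditions before observing they coincide with those of Theorem~\ref{prop:comm}---but the substance is the same.
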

\begin{proof}
	The proof follows similar steps to those applied in the proof of Theorem
	\ref{prop:comm}.
	The discrete residual $\rdresLSPGn$ defined in \eqref{eq:lspg_projsimple},
	which characterizes the \LSPGName\ O$\Delta$E \eqref{eq:lspg_proj}, can be
	written as
	\begin{align} \label{eq:LSPGODeltaEResFull}
 \begin{split} 
	 \rdresLSPGn:\grdstate\gparamSemiProof\mapsto&
	\alpha_0\rdtestbasis^{n}(\grdstate\gparamSemiProof)\tran\rdbasisnl(\grdstate) - \Delta t \beta_0
	\rdtestbasis^{n}(\grdstate\gparamSemiProof)\tran\velo(
	 \stateRef+\rdbasisnl(\grdstate),t^n\gparamSemiProof) + \\
	 & \sum_{j=1}^{k} \alpha_j 
	 \rdtestbasis^{n}(\grdstate\gparamSemiProof)\tran\rdbasisnl(\rdstate^{n-j})
	 - \Delta t \sum_{j=1}^{k}\beta_j \rdtestbasis^{n}(\grdstate\gparamSemiProof)\tran \velo (
	 \stateRef+\rdbasisnl(\rdstate^{n-j}), t^{n-j}\gparamSemiProof)
 \end{split} 
	  \end{align} 
		where we have used $\sum_{i=0}^k\alpha_i=0$. \LSPGNameCap\ projection is
		equivalent to \GalerkinName\ projection if and only if the O$\Delta$Es
		characterizing the two methods are equivalent, which is equivalent
		to requiring the existence of a full-rank matrix
		$\constantSymb:\RR{\nstatered}\timesparamspaceProof\rightarrow\RRstar{\nstatered\times\nstatered}$ such that
		\begin{equation}\label{eq:conditionsMatchL}
		\rdresn(\grdstate\gparamSemiProof)=\constantAnalysis\rdresLSPGn(\grdstate\gparamSemiProof).
		\end{equation}
			
		\noindent\textit{Case 1.} We now consider the general (non-asymptotic)
		case. Eq.~\eqref{eq:conditionsMatchL} holds for any sequence of
		approximate solutions 
		$\rdstate^n$, $n=1,\ldots,\ntime$
if and only if each term in this expansion matches, i.e.,
		\begin{align}\label{eq:requirementOne}
		\begin{split}
			\grdstate=&
			\constantAnalysis\testBasisTransposeAnalysis\rdbasisnl(\grdstate)\\
 \jacrdbasisnl(\grdstate)\pseudoinv \velo(
			\stateRef+\rdbasisnl(\grdstate),t^n\gparamSemiProof)=& \constantAnalysis\testBasisTransposeAnalysis\velo(
	 \stateRef+\rdbasisnl(\grdstate),t^n\gparamSemiProof)\\
			\rdstate^{n-j}=&\constantAnalysis\testBasisTransposeAnalysis\rdbasisnl(\rdstate^{n-j})\\
\jacrdbasisnl(\rdstate^{n-j})\pseudoinv \velo(
			\stateRef+\rdbasisnl(\rdstate^{n-j}),t^{n-j}\gparamSemiProof)=&
			\constantAnalysis\testBasisTransposeAnalysis \velo (
	 \stateRef+\rdbasisnl(\rdstate^{n-j}), t^{n-j}\gparamSemiProof)
			\end{split}
		\end{align}
		for $j=1,\ldots,k$.
		Comparing Conditions \eqref{eq:requirementOne} with
		\eqref{eq:requirementOneGal}, we see that the requirements are the same as
		in Theorem \ref{prop:comm}, but with 
		$\testBasis$ replacing $\testBasisGal$ and  $\constantSymb$ replacing
		$\constantSymbGal$. Thus, we arrive at the same result: necessary and
		sufficient conditions for equivalence are that
the trial manifold corresponds to an affine
		subspace such that the decoder satisfies \eqref{eq:linearDecoder}
		and  the test basis $\testBasisAnalysis$ satisfies
\begin{equation} \label{eq:lspgGenCond}
\rdbasis\pseudoinv = \constantDefAnalysis\testBasisTransposeAnalysis.
\end{equation} 
Note that necessary conditions for Eq.~\eqref{eq:lspgGenCond} to hold are
$\range(\testBasisAnalysis) = \range(\rdbasis)$.
Examples of test basis that satisfy this condition include
$\testBasisAnalysis = [\rdbasis\pseudoinv]\tran$ and 
$\testBasisAnalysis = \scalarConstant\rdbasis$ for some nonzero scalar
		$\scalarConstant\in\RR{}$.
However, unlike in Theorem \ref{prop:comm}, we are not free to choose the
test basis; rather, it is defined as 
 \begin{equation} \label{eq:lspgTestOverall}
\rdtestbasis^{n}:\grdstate\mapsto\testBasisAnalysisLongLinear.
	  \end{equation}
		For the necessary condition
$\range(\testBasisAnalysis) = \range(\rdbasis)$ to hold, the second term in 
Eq.~\eqref{eq:lspgTestOverall} must be zero. This occurs if and only if either
(1) an explicit scheme is
	employed such that $\beta_0=0$, or (2) the limit $\Delta t\rightarrow 0$ is
	taken. In both cases, 
	$
	\testBasisAnalysis=\scalarConstant\rdbasis$
	holds with
		$\scalarConstant=\alpha_0$.\\
		\noindent
\noindent\textit{Case 2.} We now consider asymptotic arguments.
Assuming the nonlinear trial manifold is
	twice continuously differentiable, and
	$\|\rdstate^{n-j} - \rdstate^{n}\| = \bigO(\Delta t)$, then for $\grdstate$
	in a neighborhood of $\rdstate^n$ such that 
	$\|\grdstate - \rdstate^{n}\| = \bigO(\Delta t)$, we obtain the expressions
	\eqref{eq:limiting}.
Substituting these expressions into the definition of the $\rdresLSPGn$ in
Eq.~\eqref{eq:LSPGODeltaEResFull},
enforcing
equivalence of each term in  the matching conditions
\eqref{eq:conditionsMatchL}, and taking the limit $\Delta t\rightarrow 0$ yields
		\begin{align}\label{eq:requirementOneAsymp}
		\begin{split}
			\grdstate=&
			\constantAnalysis\testBasisTransposeAnalysis
			\jacrdbasisnl(\rdstate^{n})
			\grdstate\\
 \jacrdbasisnl(\rdstate^n)\pseudoinv \velo(
			\stateRef+\rdbasisnl(\grdstate),t^n\gparamSemiProof)=&
			\constantAnalysis\testBasisTransposeAnalysis\velo(
	 \stateRef+\rdbasisnl(\grdstate),t^n\gparamSemiProof)\\
			\rdstate^{n-j}=&\constantAnalysis\testBasisTransposeAnalysis
			\jacrdbasisnl(\rdstate^{n})
			\rdstate^{n-j}\\
\jacrdbasisnl(\rdstate^{n})\pseudoinv \velo(
			\stateRef+\rdbasisnl(\rdstate^{n-j}),t^{n-j}\gparamSemiProof)=&
			\constantAnalysis\testBasisTransposeAnalysis \velo (
	 \stateRef+\rdbasisnl(\rdstate^{n-j}), t^{n-j}\gparamSemiProof),
			\end{split}
		\end{align}
		where we have used $\sum_{j=0}^k\alpha_j=0$.  
	A necessary and sufficient condition for the first and
	third conditions to hold is 	
 \begin{equation} 
\constantAnalysis = 
	 [\testBasisTransposeAnalysis\jacrdbasisnl(\rdstate^{n})]^{-1}.
	  \end{equation} 
		Then, 
	a necessary and sufficient condition for the second and fourth
	conditions to hold is
	 \begin{equation} \label{eq:finalEquivConditionAsyL}
\jacrdbasisnl(\rdstate^n)\pseudoinv = 
[\testBasisTransposeAnalysis
			\jacrdbasisnl(\rdstate^{n})]^{-1}
\testBasisTransposeAnalysis.
		  \end{equation} 
		As in Case 1 above, a necessary condition for \eqref{eq:finalEquivConditionAsyL}
		to hold is that $\range(\testBasisAnalysis) =
		\range(\jacrdbasisnl(\rdstate^n))$. Due to the definition of the test
		basis, this requires the second term on the right-hand side of Eq.~\eqref{eq:testBasisMatDef}
		to be zero. This is already satisfied by the stated assumption that the
		limit $\Delta t\rightarrow 0$ is taken, in which case $
		\testBasisAnalysis=\scalarConstant\jacrdbasisnl (\grdstate)$ holds with
		$\scalarConstant=\alpha_0$. It can be easily verified that this test basis
		satisfies the condition \eqref{eq:finalEquivConditionAsyL}.
\end{proof}

\OWN{We now derive \textit{a posteriori} local discrete-time error bounds for both
the manifold Galerkin and manifold LSPG projection methods in the context of
linear multistep methods, and demonstrate that manifold LSPG projection
sequentially minimizes the error bound in time. For notational simplicity, we
suppress the second argument of the velocity $\velo$, as the time superscript
appearing in the second argument always matches that of the first argument.

\begin{theorem}[Error bound]\label{thm:error_bound}
	If the velocity $\velo$ is Lipschitz continuous, i.e.,
	there exists a constant $\lipschitz>0$ such that
	$\norm{\velo(\bm{x})-\velo(\bm{y})}\leq\lipschitz\norm{\bm{x}-\bm{y}}$
	for all $\bm{x},\,\bm{y}\in\RR{\nstate}$, and
	the time step is sufficiently small such that
	$ \Delta t < {\abs{\alpha_0}}/{\abs{\beta_0}\lipschitz}$,
	then 
\begin{align} 
	\label{eq:boundGal}&\norm{\stateFOMArg{n} \negstateROMGalArg{n}}\leq\frac{1}{\hconstant}
	\norm{\resnROMArg{\stateROMGalArg{n}}} +
\sum_{j=1}^{k}\gammaconstantj\norm{
\stateFOMArg{n-j}\negstateROMGalArg{n-j}
		}\\
	\label{eq:boundLSPG}&\norm{\stateFOMArg{n} \negstateROMLSPGArg{n}}\leq\frac{1}{\hconstant}
	\min_{\dstate\in\stateRef+ \manifold}\norm{\resnROMArg{\dstate}} +
\sum_{j=1}^{k}\gammaconstantj\norm{
\stateFOMArg{n-j}\negstateROMLSPGArg{n-j}
		},
\end{align} 
	where $\rdstateGal^n$ denotes the manifold Galerkin solution satisfying 
	\eqref{eq:GalerkinODeltaE} and $\rdstateLSPG^n$
	denotes the manifold LSPG solution satisfying \eqref{eq:disc_opt_problem}.
	Here, $\hconstant\defeq\abs{\alpha_0} - \abs{\beta_0}\lipschitz\Delta t$
and $\gammaconstantj\defeq\left(\abs{\alpha_j} + 
	\abs{\beta_j}\lipschitz
	\Delta
	t\right)/\hconstant$.
\end{theorem}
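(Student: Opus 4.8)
\section*{Proof proposal}

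The plan is to obtain both bounds from a single residual--error identity produced by comparing the full-order O$\Delta$E against the reduced-order trajectory. Following the convention stated just before the theorem, let $\resnROMArg{\cdot}$ denote the full-order time-discrete residual \eqref{eq:disc_res} in which the history states $\states^{n-j}$ have been replaced by the corresponding reduced-order states --- the manifold Galerkin states $\stateROMGalArg{n-j}$ for \eqref{eq:boundGal}, the manifold LSPG states $\stateROMLSPGArg{n-j}$ for \eqref{eq:boundLSPG} --- so that $\norm{\resnROMArg{\cdot}}$ is a computable (a posteriori) quantity. Note that the manifold Galerkin O$\Delta$E \eqref{eq:GalerkinODeltaE} enforces vanishing only of the \emph{projected} residual \eqref{eq:time_cont_res}, so in general $\resnROMArg{\stateROMGalArg{n}}\neq\zero$; similarly, by \eqref{eq:disc_opt_problem}, $\resnROMArg{\stateROMLSPGArg{n}}$ is the minimal-norm residual over the trial manifold.

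For the Galerkin bound, set $\bm{e}^m\defeq\stateFOMArg{m}\negstateROMGalArg{m}$. The FOM satisfies $\res^n(\stateFOMArg{n})=\zero$ with the FOM history; subtracting this identity from the analogous expression $\resnROMArg{\stateROMGalArg{n}}$, using the definition \eqref{eq:disc_res} together with $\sum_{j=0}^k\alpha_j=0$, gives
\begin{equation*}
	\alpha_0\bm{e}^n = -\resnROMArg{\stateROMGalArg{n}} + \Delta t\beta_0\bigl(\velo(\stateFOMArg{n})-\velo(\stateROMGalArg{n})\bigr) - \sum_{j=1}^{k}\Bigl[\alpha_j\bm{e}^{n-j} - \Delta t\beta_j\bigl(\velo(\stateFOMArg{n-j})-\velo(\stateROMGalArg{n-j})\bigr)\Bigr].
\end{equation*}
Taking $\ell^2$ norms, applying the triangle inequality, and bounding each velocity difference via $\norm{\velo(\stateFOMArg{m})-\velo(\stateROMGalArg{m})}\leq\lipschitz\norm{\bm{e}^m}$ yields
\begin{equation*}
	\abs{\alpha_0}\norm{\bm{e}^n} \leq \norm{\resnROMArg{\stateROMGalArg{n}}} + \abs{\beta_0}\lipschitz\Delta t\,\norm{\bm{e}^n} + \sum_{j=1}^{k}\bigl(\abs{\alpha_j}+\abs{\beta_j}\lipschitz\Delta t\bigr)\norm{\bm{e}^{n-j}}.
\end{equation*}
The time-step hypothesis $\Delta t<\abs{\alpha_0}/(\abs{\beta_0}\lipschitz)$ ensures $\hconstant=\abs{\alpha_0}-\abs{\beta_0}\lipschitz\Delta t>0$, so moving the $\norm{\bm{e}^n}$ term to the left-hand side and dividing by $\hconstant$ produces exactly \eqref{eq:boundGal}, with $\gammaconstantj=(\abs{\alpha_j}+\abs{\beta_j}\lipschitz\Delta t)/\hconstant$.

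For the LSPG bound, repeat the identical computation with $\stateROMGalArg{\cdot}$ replaced by $\stateROMLSPGArg{\cdot}$, obtaining $\norm{\stateFOMArg{n}\negstateROMLSPGArg{n}}\leq\hconstant^{-1}\norm{\resnROMArg{\stateROMLSPGArg{n}}}+\sum_{j=1}^k\gammaconstantj\norm{\stateFOMArg{n-j}\negstateROMLSPGArg{n-j}}$. It then remains only to observe that, by the optimality property \eqref{eq:disc_opt_problem} and $\manifold=\{\rdbasisnl(\grdstate)\,|\,\grdstate\in\RR{\dofrom}\}$, the manifold LSPG iterate $\stateROMLSPGArg{n}=\stateRef+\rdbasisnl(\rdstateLSPG^n)$ attains $\min_{\dstate\in\stateRef+\manifold}\norm{\resnROMArg{\dstate}}$, whence $\norm{\resnROMArg{\stateROMLSPGArg{n}}}=\min_{\dstate\in\stateRef+\manifold}\norm{\resnROMArg{\dstate}}$; substituting gives \eqref{eq:boundLSPG} and simultaneously exhibits the claimed sequential-in-time minimization of the bound's residual term.

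The argument is essentially bookkeeping; the one point requiring genuine care is the consistent treatment of history dependence --- the FOM identity $\res^n(\stateFOMArg{n})=\zero$ uses the FOM history, whereas $\resnROMArg{\cdot}$ uses the ROM history, and it is exactly this mismatch that generates the recursive terms $\sum_j\gammaconstantj\norm{\stateFOMArg{n-j}-\stateROMArg{n-j}}$ rather than a bound involving only a single one-step residual. A secondary subtlety is to avoid treating the Galerkin step as making the \emph{full} residual vanish (only its projection does), so that $\resnROMArg{\stateROMGalArg{n}}$ is genuinely the quantity that appears in \eqref{eq:boundGal}.
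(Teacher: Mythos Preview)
Your proposal is correct and matches the paper's proof essentially step for step: subtract the FOM and ROM residual identities, apply the triangle inequality together with Lipschitz continuity of $\velo$, absorb the implicit $\norm{\bm{e}^n}$ term using the time-step restriction, and for LSPG invoke the optimality \eqref{eq:disc_opt_problem}. The only cosmetic difference is that the paper organizes the estimate as a two-sided bound (lower-bounding $\norm{\bm{e}^n - \tfrac{\Delta t\beta_0}{\alpha_0}(\velo(\stateFOMArg{n})-\velo(\stateROMArg{n}))}$ via the reverse triangle inequality) rather than isolating $\alpha_0\bm{e}^n$ first as you do; the algebra is identical, and your remark about $\sum_{j=0}^k\alpha_j=0$ is not actually needed here.
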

\begin{proof}
	We begin by defining linear multistep residual operators associated with the
	FOM sequence of solutions $\{\stateFOMArg{j}\}_{j=1}^n$ and an (arbitrary)
	sequence of approximate solutions $\{\stateROMArg{j}\}_{j=1}^n$, i.e.,
	\begin{align}
		\resnFOM:\dstate&\mapsto\alpha_0 \dstate - \Delta t \beta_0
		\velo(\dstate) + \sum_{j=1}^{k} \alpha_j \stateFOMArg{n-j}
	- \Delta t \sum_{j=1}^{k} \beta_j \velo (\stateFOMArg{n-j})\\
		\resnROM:\dstate&\mapsto
		\alpha_0 \dstate - \Delta t \beta_0
		\velo(\dstate) + \sum_{j=1}^{k} \alpha_j \stateROMArg{n-j}
	- \Delta t \sum_{j=1}^{k} \beta_j \velo (\stateROMArg{n-j}).
	\end{align}
We note that $\stateROMArg{j} = \stateROMGalArg{j}$, $j=1,\ldots,n$ in the case of manifold
Galerkin projection, and 
$\stateROMArg{j} = \stateROMLSPGArg{j}$, $j=1,\ldots,n$ in the case of manifold
LSPG projection.
Subtracting  $\resnROMArg{\stateROMArg{n}}$ from $\resnFOMArg{\stateFOMArg{n}}$ and
noting from 
Eq.~\eqref{eq:fomODeltaE} that $\resnFOMArg{\stateFOMArg{n}} =
\zero$ yields
\begin{align*} 
\begin{split} 
	-\resnROMArg{\stateROMArg{n}} &= \alpha_0\left(\stateFOMArg{n} -
	\stateROMArg{n}\right) - \Delta
	t\beta_0\left(\velo(\stateFOMArg{n})-\velo(\stateROMArg{n})\right)
	 + \sum_{j=1}^{k} \alpha_j
	 \left(\stateFOMArg{n-j}-\stateROMArg{n-j}\right)
	- \Delta t \sum_{j=1}^{k} \beta_j \left(\velo (\stateFOMArg{n-j})
	-\velo (\stateROMArg{n-j})\right).
\end{split} 
\end{align*} 
Rearranging this expression gives
\begin{align} \label{eq:IandII}
\begin{split} 
	&\underbrace{\stateFOMArg{n} - \stateROMArg{n} - 
	\frac{\Delta
	t\beta_0}{\alpha_0}\left(\velo(\stateFOMArg{n})-\velo(\stateROMArg{n})\right)}_\text{(I)}
	 =\\
	&\qquad\underbrace{-\frac{1}{\alpha_0}\resnROMArg{\stateROMArg{n}} -  \frac{1}{\alpha_0}\sum_{j=1}^{k} \alpha_j
	 \left(\stateFOMArg{n-j}-\stateROMArg{n-j}\right)
	+ \frac{\Delta t}{\alpha_0} \sum_{j=1}^{k} \beta_j \left(\velo (\stateFOMArg{n-j})
	-\velo (\stateROMArg{n-j})\right)}_\text{(II)}.
\end{split} 
\end{align} 
We proceed by bounding $\norm{\text{(I)}}$ from below, and $\norm{\text{(II)}}$ from above.
To bound $\norm{\text{(I)}}$ from below, we apply the reverse triangle to obtain
 \begin{equation} \label{eq:Iboundone}
	 \norm{\text{(I)}}\geq
	 \abs{
	 \norm{\stateFOMArg{n} - \stateROMArg{n}} - 
	 \norm{\underbrace{
\frac{\Delta
	 t\beta_0}{\alpha_0}\left(\velo(\stateFOMArg{n})-\velo(\stateROMArg{n})\right)
		 }_\text{(I.a)}}
	 }.
 \end{equation}
 We now use Lipschitz continuity of the velocity $\velo$ to bound
 $\norm{\text{(I.a)}}$ from above as
\begin{equation} \label{eq:Iboundtwo}
\norm{
	\text{(I.a)}
}\leq \frac{\Delta t
		 \abs{\beta_0}\lipschitz}{\abs{\alpha_0}}\norm{\stateFOMArg{n} -
		 \stateROMArg{n}}.
\end{equation} 
If time-step-restriction condition $ \Delta t < {\abs{\alpha_0}}/{\abs{\beta_0}\lipschitz}$ holds,
then we can combine inequalities \eqref{eq:Iboundone} and \eqref{eq:Iboundtwo}
as
\begin{equation} \label{eq:Ilowerbound}
\norm{\text{(I)}}\geq
	 \frac{\hconstant}{\abs{\alpha_0}}\norm{\stateFOMArg{n} - \stateROMArg{n}}.
\end{equation} 

To bound $\norm{\text{(II)}}$ in Eq.~\eqref{eq:IandII} from above, we apply the triangle
inequality and employ Lipschitz continuity of the velocity $\velo$, which
gives
\begin{equation} \label{eq:IIupperbound}
	\norm{
		\text{(II)}
		}\leq
	\frac{1}{\abs{\alpha_0}}\norm{\resnROMArg{\stateROMArg{n}}}
	+ \frac{1}{\abs{\alpha_0}}\sum_{j=1}^{k}\left(\abs{\alpha_j} + 
	\abs{\beta_j}\lipschitz
	\Delta
	t\right)\norm{
\stateFOMArg{n-j}-\stateROMArg{n-j}
		}.
\end{equation} 
Combining Eq.~\eqref{eq:IandII} with inequalities \eqref{eq:Ilowerbound} and
\eqref{eq:IIupperbound} gives
\begin{equation} \label{eq:finalGenResult}
	\norm{\stateFOMArg{n} - \stateROMArg{n}}\leq\frac{1}{\hconstant}
	\norm{\resnROMArg{\stateROMArg{n}}} +
\sum_{j=1}^{k}\gammaconstantj\norm{
\stateFOMArg{n-j}-\stateROMArg{n-j}
		}.
\end{equation} 
The manifold-Galerkin error bound \eqref{eq:boundGal} results from substituting
$\stateROMArg{j} = \stateROMGalArg{j}$, $j=1,\ldots,n$ in \eqref{eq:finalGenResult}, while
the manifold-LSPG error bound \eqref{eq:boundLSPG} results from substituting
$\stateROMArg{j} = \stateROMLSPGArg{j}$, $j=1,\ldots,n$ in \eqref{eq:finalGenResult}
and noting that the manifold LSPG solution $\stateROMLSPGArg{j}$ satisifies the
time-discrete residual-minimization property \eqref{eq:disc_opt_problem}.
\end{proof}

This result illustrates that the time-discrete residual minimization property
of manifold LSPG projection allows its approximation to sequentially minimize
the error bound, as the first term on the right-hand side of bound \eqref{eq:boundLSPG}
corresponds to the new contribution to the error bound at time instance $t^n$,
while the second term on the right-hand side comprises the recursive term in
the bound.
}
\section{Nonlinear trial manifold based on deep convolutional autoencoders}\label{sec:manifold}
This section describes the approach we propose for constructing the decoder
$\rdbasisnl:\RR{\nstatered}\rightarrow\RR{\nstate}$ that defines the nonlinear
trial manifold. As described in the introduction, any nonlinear-manifold
learning method equipped with a continuously differentiable mapping from the
generalized coordinates (i.e., the latent state) to an approximation of the
state (i.e., the data) is compatible with the \GalerkinName\ and \LSPGName\
methods proposed in Section \ref{sec:rom}. Here, we
pursue deep convolutional autoencoders for this purpose, as they are very
expressive and scalable; there is also high-performance software available
for their construction.
When the nonlinear trial manifold is constructed using the proposed
deep convolutional autoencoder,
we refer to the \GalerkinNameCap\ ROM as
the \textit{\GalerkinNameDeepCap}\ ROM and the \LSPGNameCap\ ROM as the
\textit{\LSPGNameDeepCap} ROM.\footnote{This is analogous to the naming
convention employed for linear-subspace ROMs, as linear-subspace Galerkin
projection with POD is referred to as a
POD--Galerkin ROM.}

Section \ref{sec:autoenc} describes the mathematical structure of
autoencoders, which provides a neural-network model for the decoder
$\rdbasisnl(\cdot; \nnparam):\mathbb{R}^{\dofrom} \rightarrow
\mathbb{R}^{\dofhf}$, where $\nnparam$ denotes the neural-network parameters
to be computed during training; Section \ref{sec:autoencoder} describes the
proposed autoencoder architecture, which is tailored to spatially distributed
dynamical-system states; Section \ref{sec:initialCondition} describes how we
propose to satisfy the initial condition using the proposed autoencoder in
line with Remark \ref{rem:IC}.

\subsection{Autoencoder}\label{sec:autoenc}

An autoencoder is a type of feedforward neural network that aims to learn the
identity mapping, i.e., $\autoenc:\gvec\mapsto\gaprxvec$ with
$\gaprxvec\approx \gvec$ and $\autoenc:\RR{\ninput}\rightarrow\RR{\ninput}$.
Autoencoders achieve this using an architecture composed of two parts: the
encoder $\encoder:\gvec\mapsto\latcode$ with
$\encoder:\RR{\ninput}\rightarrow\RR{\nlatent}$ and $\nlatent\ll\ninput$, which 
maps a high-dimensional vector $\gvec$ to a low-dimensional 
\textit{code} $\latcode$;
and the decoder
$\decoder:\latcode\mapsto\gaprxvec$ with
$\decoder:\RR{\nlatent}\rightarrow\RR{\ninput}$, which maps the code
$\latcode$ to an
approximation of the original high-dimensional vector $\gaprxvec$. Thus, the resulting
autoencoder takes the form
\begin{equation*}
\autoenc:\gvec \mapsto \decoder \circ \encoder(\gvec).
\end{equation*}
If $\autoenc(\gvec)\approx\gvec$ over a data set
$\gvec\in\{\gvec^{(i)}\}_{i}$, then the low-dimensional codes $\encoder(\gvec^{(i)})$
contain sufficient information to recover accurate approximations
 of the data $\gaprxvec^{(i)}\approx\gvec^{(i)}$ via the application of the
 decoder $\decoder$. In this work, we propose to employ the autoencoder
 decoder $\decoder$ for the decoder $\rdbasisnl$ used to define the
 nonlinear trial manifold introduced in Section
 \ref{sec:nonlinearTrialManifold}.

 In feedforward networks, each network layer typically corresponds to a vector or tensor, whose values are computed by applying an affine transformation to
 the previous layer followed by a nonlinear activation function. An encoder
 with $\nenclayer$ layers takes the form
\begin{equation*}
\encoder:(\gvec; \encparam) \mapsto \enclayer_{\nenclayer} (\cdot; \encparaml{\nenclayer}) \circ \enclayer_{\nenclayer-1}(\cdot;\encparaml{{\nenclayer-1}}) \circ \cdots \circ \enclayer_1 (\gvec; \encparaml{1}),
\end{equation*}
where $\enclayer_i(\cdot;\encparaml{i}): \mathbb{R}^{\dofenclayer{i-1}}
\rightarrow \mathbb{R}^{\dofenclayer{i}}$, $i=1,\ldots,\nenclayer$ denotes the
function applied at layer $i$ of the neural network; $\encparaml{i}$,
$i=1,\ldots,\nenclayer$ denotes the weights and the biases employed at layer
$i$ with $\encparam \equiv (\encparaml{1},\ldots,\encparaml{\nenclayer})$; and
$\dofenclayer{i}$, $i=1,\ldots,\nenclayer$ denotes the dimensionality of the
output at layer $i$. The input has dimension
$\dofenclayer{0} = \ninput$ and the final layer produces a code with dimension
$\dofenclayer{\nenclayer} = \nlatent$. The nonlinear activation function is
applied in layers 1 to $\nenclayer$ to a function of the weights, biases, and the
outputs from the previous layer 
such that
\begin{equation*}
\enclayer_{i}:(\gvec; \encparaml{i}) \mapsto \encactivation_i ( \encaffinetrans_i (\encparaml{i}, \gvec)),
\end{equation*}
where $\encactivation_i$ is an element-wise nonlinear activation function. For fully-connected layers as in the traditional multilayer
perceptron (MLP), $\encaffinetrans_i (\encparaml{i}, \gvec) =
\encparaml{i}[1, \gvec\tran]\tran$ with $\encparaml{i} \in
\mathbb{R}^{\dofenclayer{i} \times (\dofenclayer{i-1}+1)}$ a real-valued
matrix. For convolutional layers, $\encaffinetrans_i$ corresponds to a convolution
operator with $\encparaml{i}$ providing the
convolutional-filter weights.

A decoder with $\ndeclayer$ layers also corresponds to a feedforward network;
it takes the form
\begin{equation*}
\decoder:(\latcode; \decparam) \mapsto \declayer_{\ndeclayer} (\cdot;
	\decparaml{\ndeclayer}) \circ
	\declayer_{\ndeclayer-1}(\cdot;\decparaml{{\ndeclayer-1}}) \circ \cdots
	\circ \declayer_1 (\latcode; \decparaml{1})
\end{equation*}
with $\declayer_{i} (\cdot; \decparaml{i}) : \mathbb{R}^{\dofdeclayer{i-1}}
\rightarrow \mathbb{R}^{\dofdeclayer{i}}$, $i=1,\ldots,\ndeclayer$, and
$\decparam \equiv (\decparaml{1},\ldots,\decparaml{\ndeclayer})$. The
dimension of the input is $\dofdeclayer{0} = \nlatent$ and the dimension of
the final layer (i.e., the output layer) is $\dofdeclayer{\ndeclayer} =
\ninput$. Again, the nonlinear activation is applied to the output of the
previous layer such that
\begin{equation*}
\declayer_{i}:(\gvec; \decparaml{i}) \mapsto \decactivation_i(
	\decaffinetrans_i (\decparaml{i}, \gvec)).
\end{equation*}
As in the encoder, 
$\decaffinetrans_i (\decparaml{i}, \gvec) =
\decparaml{i}[1, \gvec\tran]\tran$ with $\decparaml{i} \in
\mathbb{R}^{\dofdeclayer{i} \times (\dofdeclayer{i-1}+1)}$ a real-valued
matrix
for fully-connected layers, while $\decaffinetrans_i$ corresponds to a
\RO{convolution} operator with $\decaffinetrans_i$ providing transposed
convolutional-filter weights for convolutional layers. 

Because MLP autoencoders are fully connected, the number of parameters
(corresponding to the edge weights and biases) \RO{can} be extremely large when the
number of inputs $\ninput$ is large; in such scenarios, these models typically
require a large amount of training data. As this work aims to enable
model reduction for large-scale dynamical systems, directly applying an
MLP autoencoder to the state such that $\ninput=\nstate$ is not 
practical in many scenarios. To address this, alternative neural-network architectures
have
been devised that make use of \textit{parameter sharing} to reduce the total
number of parameters in the model, and thus the amount of data needed for
training. In the context of dynamical systems characterized by a state that
can be represented as spatially distributed data, we propose to apply
\textit{convolutional autoencoders}. Such methods  are
applicable to multi-channel spatially distributed input data and
employ parameter sharing such that they
can be trained with less data.
Further, such
models tend to generalize well to unseen test data
\cite{lecun2015deep,lecun1998gradient} because they exploit three key
properties of natural signals: local connectivity, parameter sharing, and
equivariance to translation \cite{goodfellow2016deep,lecun2015deep}.

\subsection{Deep convolutional autoencoders for spatially distributed
states}\label{sec:autoencoder}

Many dynamical systems are characterized by a state that can be represented as
spatially distributed data, e.g., spatially discretized
partial-differential-equations
models. In such cases, there is a \textit{restriction operator}
$\restrictionOp$ that maps the
state to a tensor representing spatially distributed data, i.e., 
\begin{equation}\label{eq:restrictionOP}
\restrictionOp:\RR{\nstate}\rightarrow\RR{\nspaceDim{1}\times\cdots\times\nspaceDim{\nDim}\times\nchannel},
\end{equation}
where $\nspaceDim{i}$ denotes the number of discrete points in spatial
dimension $i$; $\nDim\in\{1,2,3\}$ denotes the spatial dimension,
and $\nchannel$ denotes the number of \textit{channels}. For images,
typically $\nchannel=3$ (i.e., red, green\RO{, and} blue). For dynamical system models
the number of channels $\nchannel$ is equal to the number of state variables
defined at a given spatial location; for example, $\nchannel$ is equal to the
number of conserved variables when the dynamical system arises from the
spatial discretization of a conservation law. We also write the associated
\textit{prolongation operator}, which aims to provide the inverse mapping such that
\begin{equation}\label{eq:prolongationOp}
\prolongationOp:\RR{\nspaceDim{1}\times\cdots\times\nspaceDim{\nDim}\times\nchannel}\rightarrow\RR{\nstate}.
\end{equation}
For coarse discretizations of the spatial domain, it is possible to ensure
$\restrictionOp\circ\prolongationOp(\cdot)$ corresponds to the
identity map; for fine discretizations, it is possible to ensure
$\prolongationOp\circ\restrictionOp(\cdot)$ corresponds to the identity
map; for cases where underlying grid provides an isomorphic representation of
the state, it is possible to achieve both \cite{carlberg2018recovering}.

After reformatting the state from a vector to a tensor by applying the
restriction operator $\restrictionOp$, we apply an invertible affine
scaling operator 
$\scalingOp:\RR{\nspaceDim{1}\times\cdots\times\nspaceDim{\nDim}\times\nchannel}
\rightarrow\RR{\nspaceDim{1}\times\cdots\times\nspaceDim{\nDim}\times\nchannel}$
for data-standardization purposes.
Section \ref{sec:standardization} defines the specific
elements of this operator, which are computed from
the training data.


\begin{sidewaysfigure}[p]
\centering
    \includegraphics[width=\linewidth]{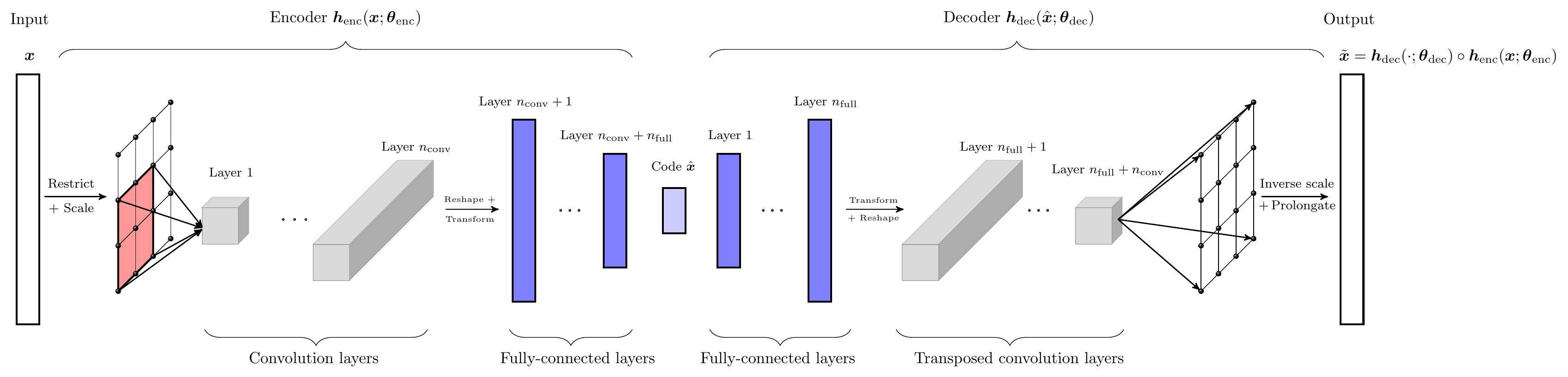}
\caption{Network architecture of the deep convolutional autoencoder, which takes a state of dynamical systems as an input and produces an approximated state as an output. The encoder extracts low-dimensional code $\latcode$ from the discrete representation of the state $\gvec$ by applying the restriction operator (Eq. \eqref{eq:restrictionOP}) and a set of convolutional layers (gray boxes), followed by a set of fully-connected layers (blue rectangles). The decoder approximately reconstructs the high-dimensional vector $\autoenc(\gvec;\nnparam)$ by performing the inverse operations of the encoder, applying fully-connected layers (blue rectangles), followed by the transposed-convolutional layers (gray boxes), then applying the prolongation operator (Eq. \eqref{eq:prolongationOp}) to the resulting quantity.}
\label{fig:network}
\end{sidewaysfigure}

Figure \ref{fig:network} depicts the architecture of the proposed deep
convolutional autoencoder. Before the first convolutional layer, the autoencoder applies the
restriction operator $\restrictionOp$ and scaling operator $\scalingOp$, while
after the last convolutional layer, the autoencoder applies the inverse scaling
operator $\scalingOp^{-1}$ and prolongation operator $\prolongationOp$.
\RT{We note that the restriction and prolongation operators
are (deterministic) operations that simply reshape the state into an appropriate
format for the autoencoder, while the scaling operator is defined explicitly
from the range of the training data; thus, these quantities are not subject to training.}
We consider the combination of convolutional
layers (gray boxes) and fully-connected layers (blue rectangles).
\ref{app:conv} provides a description of convolutional layers, including
hyperparameters and parameters that are subject to optimization.  The encoder
network is composed \RO{of} restriction and scaling, followed \RO{by} a sequence of
$\nconv$ convolutional layers and
$\nfull$ fully-connected layers.  The decoder network is
composed of \RO{a} sequence of $\nfull$ fully-connected layers and $\nconv$ transposed-convolutional layers with no nonlinear
activation in the last layer, followed by inverse scaling and prolongation. As a result, the dimension of the input to
the first convolutional layer is $\dofenclayer{0}=
\nchannel\prod_{i=1}^d\nspaceDim{i}$. The dimension $\dofenclayer{\nconv}$ of
the output of encoder layer $\nconv$ (i.e., the final convolutional layer) is
determined by the hyperparameters defining the kernels used in the
convolutional layers (i.e., depth, stride, zero-padding).  Similarly, the
dimension $\dofdeclayer{\nfull}$ of the output of decoder layer $\nfull$
(i.e., the final fully connected layer) is determined by the hyperparameters
defining the kernels in the subsequent convolutional layers.  The dimension of
the output of the final decoder layer is $\dofdeclayer{\nlayer} =
\dofenclayer{0}= \nchannel\prod_{i=1}^d\nspaceDim{i}$.

A particular instance of the network architecture can be defined by specifying
the number of convolutional layers $\nconv$, the number of fully-connected
layers $\nfull$, the number of units in each layer, and the types of nonlinear
activations. Such architecture-related parameters are typically considered
hyperparameters, as they are not subject to optimization during training.

We propose to set the decoder for the proposed manifold ROMs to the decoder
arising from the deep convolutional autoencoder architecture described in
Figure \ref{fig:network}, i.e., $\rdbasisnl=\decoder$. When the 
\GalerkinNameCap\ and 
\LSPGNameCap\ ROMs employ this choice of decoder, we 
refer to them as \textit{\GalerkinNameDeepCap}\ and \textit{\LSPGNameDeepCap}\
ROMs, respectively.

\subsection{Initial condition satisfaction}\label{sec:initialCondition}
As discussed in Remark \ref{rem:IC}, the initial generalized coordinates
$\initrdstate(\param)$ can be ensured to satisfy the initial conditions by
employing a reference state defined by Eq.~\eqref{eq:refStateIC}; however,
this implies that the decoder must be able to accurately represent deviations
from this reference state.  To accomplish this for the proposed autoencoder,
we propose (1) to train the autoencoder with snapshot data centered on the
initial condition along with the zero vector (as described in Section
\ref{sec:snapshots}), and (2) to set the initial generalized coordinates to an
encoding of zero, i.e., $\initrdstate(\param) = \encoder(\zero)$ for all
$\param\in\paramspace$.  Then, setting the reference state according to
Eq.~\eqref{eq:refStateIC} leads to a reference state of $\stateRef(\param)=
\initstate(\param)-\rdbasisnl(\encoder(\zero))$. Further, if
$\decoder(\encoder(\zero))\approx \zero$, as is encouraged by including the
zero vector in training, then the reference state comprises a perturbation of
the initial condition, and the decoder $\decoder$ must only represent
deviations from this perturbation. This is consistent with the training of the
autoencoder, as the snapshots have been
centered on the initial condition.

\section{Offline training}\label{sec:offline}
This section describes the offline training process used to train the deep
convolutional autoencoder proposed in Section \ref{sec:autoencoder}.  The
approach employs precisely the same snapshot data used by POD. Section
\ref{sec:snapshots} describes the (snapshot-based) data collection procedure,
which is identical to that employed by 
POD. Section \ref{sec:standardization} describes how the data
are scaled to improve numerical stability of autoencoder training.  Section
\ref{sec:grad_learning} summarizes the gradient-based-optimization approach
employed for training the autoencoder (i.e., computing the optimal parameters
$\nnparamOpt$) given the scaled training data.
Algorithm \ref{alg:ae_train} provides a summary of offline training.

\begin{algorithm}[H]
    \caption{Offline training}
    \label{alg:ae_train}
    \algorithmicrequire{Training-parameter instances $\paramspacetrain$;
	restriction operator $\restrictionOp$; prolongation operator $\prolongationOp$;
	autoencoder architecture; SGD hyperparameters (adaptive learning-rate strategy; initial parameters $\nnparam^{(0)}$; 
		 number of minibatches $\nbatch$; maximum number of epochs $\nepoch$;
		 early-stopping criterion)}\\
    \algorithmicensure{Encoder
		$\encoder$; decoder $\rdbasisnl$}
    \begin{algorithmic}[1] 
				\State Solve the FOM O$\Delta$E \eqref{eq:fomODeltaE} for
		$\param\in\paramspacetrain$ and form the
			snapshot matrix $\snapshotMat$ (Eq.~\eqref{eq:snapshotMat}).
		\State Compute the scaling operator $\scalingOp$
		(Eq.~\eqref{eq:scalingDef}), which completes the definition of the
		autoencoder $\autoenc(\gvec; \nnparam) = \decoder(\cdot;\decparam)\circ
		\encoder(\gvec; \encparam)$.
			\State Train the autoencoder by executing Algorithm \ref{alg:ae_sgd}
			in \ref{app:SGD} with inputs corresponding to the snapshot matrix
			$\snapshotMat$; the autoencoder $\autoenc(\gvec; \nnparam)$; and SGD hyperparameters.
			This returns the encoder $\encoder$ and decoder $\rdbasisnl$.
\end{algorithmic}
\end{algorithm}

\subsection{Snapshot-based data collection}\label{sec:snapshots}
The first step of offline training is snapshot-based data collection. This
requires solving the FOM O$\Delta$E \eqref{eq:fomODeltaE} for
training-parameter instances
$\param\in\paramspacetrain \equiv\{\paramtrain^{i}
\}_{i=1}^{\ntrain} \subset \paramspace$ and assembling the snapshot matrix
\begin{equation} \label{eq:snapshotMat}
	\snapshotMat \defeq \left[\snapshotMatArg{\paramtrain^{1}}\ \cdots\
	\snapshotMatArg{\paramtrain^{\ntrain}}
	\right] \in \mathbb{R}^{\dofhf \times \nsnap}
\end{equation} 
with $\nsnap\defeq\nseq\ntrain$ and
$
	\snapshotMatArg{\param} \defeq [
	\state^{1}(\param) - \state^{0}(\param)\ 
	\cdots\ 
	\state^{\nseq}(\param) - \state^{0}(\param)].
$
		 
\begin{remark}[Proper orthogonal decomposition]\label{rem:pod}
POD employs the snapshot matrix
	$\snapshotMat$ to compute a trial basis matrix
	$\rdbasis$ used to define an affine trial subspace $\stateRef(\param) +
	\range(\rdbasis)$. To do so, POD computes the
	singular value decomposition (SVD) and sets the trial basis matrix to be equal to the first $\nstatered$
	left singular \OWN{vectors}, i.e.,
	\begin{align}
	\snapshotMat &= \bm{U} \bm{\Sigma} \bm{V}\tran,\qquad
		\rdbasisi_i = \bm{u}_i,\ \RO{i =1,\ldots,\dofrom}.
\end{align}
The resulting POD trial basis matrix $\rdbasis$ satisfies the minimization
	problem
	\begin{equation}\label{eq:PODopt} 
	\underset{\rdbasis\in\compactStiefel{\nstatered}{\ndof}}{\text{minimize}}
	\sum_{i=1}^{\nsnap}\|\stateSnapshot{i} -
	\rdbasis\rdbasis^T\stateSnapshot{i}\|_2^2,
\end{equation} 
	where $\stateSnapshot{i}$ denotes the $i$th column of
	$\snapshotMat$
	and
	$\compactStiefel{k}{n}$ and denotes the set of orthogonal $k\times n$
	matrices (the compact Stiefel manifold).  The solution is unique up to
	a rotation.
	This technique is equivalent (up to the data-centering process) to principal
	component analysis \cite{hotelling1933analysis}, an unsupervised
	machine-learning method for linear dimensionality reduction.
\end{remark}

\subsection{Data standardization}\label{sec:standardization}

As described in Section \ref{sec:autoencoder}, the first layer of the proposed
autoencoder applies a restriction operator $\restrictionOp$, which
reformats the input vector into a tensor compatible with convolutional
layers, followed by an affine scaling operator
$\scalingOp$; the last layer applies the inverse of this scaling operator
$\scalingOp^{-1}$ and subsequently applies the prolongation operator
$\prolongationOp$ to reformat the data into a vector. We now define
the scaling operator from the training data to ensure that all elements of the
training data lie between zero and one. This scaling improves numerical
stability of the gradient-based optimization for training
\cite{hsu2003practical,sarle1997neural}, which will be described in Section
\ref{sec:grad_learning}. We adopt a standard scaling procedure also
followed, e.g., by Ref.~\cite{gonzalez18}. Namely, 
defining the restriction of the $i$th snapshot as
$\stateSnapshotRestrict{i}\defeq\restrictionOp(\stateSnapshot{i})\in\RR{\nspaceDim{1}\times\cdots\times\nspaceDim{\nDim}\times\nchannel}$,
we set the elements of the scaling operator $\scalingOp$ to
\RO{
\begin{equation} \label{eq:scalingDef}
	\scalingOpEl{i_1}{i_\nDim}{j}:\stateElementNo\mapsto \frac{\stateElementNo -
	\minstateArg{j}}{\maxstateArg{j} -
	\minstateArg{j}}
\end{equation} 
}
with
\begin{align*} 
	\maxstateArg{j}&\defeq\max_{k\innatno{\nsnap},\;i_1\innatno{\nspaceDim{1}},\ldots,\;i_\nDim\innatno{\nspaceDim{\nDim}}}\stateSnapshotRestrictEl{k}{i_1}{i_\nDim}{j}, \quad \text{and} \\
	\minstateArg{j}&\defeq\min_{k\innatno{\nsnap},\;i_1\innatno{\nspaceDim{1}},\ldots,\;i_\nDim\innatno{\nspaceDim{\nDim}}}\stateSnapshotRestrictEl{k}{i_1}{i_\nDim}{j}.
	\end{align*} 

\subsection{Autoencoder training}\label{sec:grad_learning}

Once the autoencoder architecture has been defined (including the restriction,
prolongation and scaling operators), it takes the form $\autoenc(\gvec;
\nnparam) = \decoder(; \decparam) \circ \encoder(\gvec; \encparam)$, where the
undefined parameters $\nnparam$ correspond to convolutional-filter weights and
weights and biases for the fully connected layers.  We compute optimal values
of these parameters $\nnparamOpt$ using a standard approach from deep
learning: stochastic gradient descent (SGD) with minibatching and early
stopping \cite{bottou2018optimization}. \ref{app:SGD} provides
additional details, where Algorithm \ref{alg:ae_sgd} provides the training
algorithm.

\section{Numerical experiments}\label{sec:numexp}

This section assesses the performance of the proposed \GalerkinNameDeep\ and
\LSPGNameDeep\ ROMs, which employ nonlinear trial manifolds, compared to
POD--Galerkin and POD--LSPG ROMs, which employ affine trial subspaces.  We
consider two advection-dominated benchmark problems: 1D Burgers' equation and
a chemically reacting flow.  We employ the numerical PDE tools and ROM
functionality provided by \texttt{pyMORTestbed}
\cite{zahr2015progressive}, and we construct the autoencoder using
\texttt{TensorFlow} \cite{tensorflow2015-whitepaper}.

For both benchmark problems, the 
\GalerkinNameDeep\ and
\LSPGNameDeep\ ROMs
employ a \OWN{10}-layer convolutional autoencoder corresponding to the architecture
depicted in Figure \ref{fig:network}.
The encoder  $\encoder$ consists of $\nenclayer=\OWN{5}$ layers  with $\nconv=4$ convolutional
layers, followed by $\nfull=\OWN{1}$ fully-connected layer. 
The decoder $\decoder$ consists of 
$\nfull=\OWN{1}$ fully-connected layer, followed by \RO{$\nconv=4$} transposed-convolution
layers.  The latent code of the autoencoder is of dimension $\nstatered$
(i.e., $\dofenclayer{\nenclayer} = \dofdeclayer{0} = \nstatered$), which will
vary during the experiments to define different reduced-state dimensions.
Table \ref{tab:autoenc_arch} specifies attributes of the kernels used in
convolutional and transposed-convolutional layers.

For the nonlinear activation functions $\encactivation_i$, $i=1,\ldots,\nenclayer$ and $\decactivation_i$, $i=1,\ldots,\ndeclayer-1$, we use exponential linear units (ELU) \cite{clevert2015fast}, which is defined as 
\begin{align*}
\activation(\gvec) = \left \{  \begin{array}{cl} \gvec & \text{if } \gvec \geq 0 \\ \exp(\gvec)-1 & \text{otherwise} \end{array} \right.,
\end{align*}
and an identity activation function in the output at layer $\ndeclayer$ in
decoder (as is common practice). \RT{This choice of activation ensures that
the resulting decoder $\rdbasisnl$ is continuously differentiable everywhere, and is twice
continuously differentiable almost everywhere.} We employ the
$\ell^2$-loss function defined in Eq.~\eqref{eq:mse} in the minimization
problem \eqref{eq:cost_func}, which is equivalent to
the loss function minimized by POD (see Remark \ref{rem:pod}). We apply the
Adam optimizer \cite{kingma2014adam}, which is compatible with the stochastic
gradient descent (SGD) algorithm reported in Algorithm \ref{alg:ae_sgd}; here,
the adaptive learning rate strategy computes rates for different parameters
using estimates of first and second moments of the gradients. 

\begin{table}[!h]
	\caption{Parameters of the autoencoder architecture described in Figure
	\ref{fig:network} applied to both benchmark problems. The encoder 
	consists of $\nenclayer=\OWN{5}$ layers  with $\nconv=4$ convolutional layers,
	followed by $\nfull=\OWN{1}$ fully-connected layers.  The decoder 
	consists of $\nfull=\OWN{1}$ fully-connected layers, followed by \RO{$\nconv=4$}
	transposed-convolution layers. For the parameterized 1D Burgers' equation,
	\RO{we employ 1D convolutional kernel filters with the kernel length 25 at every convolutional layer and transposed-	convolutional layer,} and apply length-\OWN{2} stride ($s=\OWN{2}$) encoder layer 1 and the decoder layer \OWN{5},
	and length-\OWN{4} stride ($s=\OWN{4}$) for other convolutional layers and transposed-convolutional layers. For the chemically reacting flow, \OWN{we employ
	$5\times 5$ convolutional kernel filters at every convolutional layer and
	transposed-convolutional layer,} and apply length-2
	stride ($s=2$). \RO{For the definition of stride, we refer to \ref{app:conv}.} For zero-padding, we use half padding \cite{dumoulin2016guide,goodfellow2016deep}. \RT{With these settings, $\dofenclayer{\nconv} = \dofdeclayer{\nfull} = 128$ for the 1D Burgers' equation and $\dofenclayer{\nconv} = \dofdeclayer{\nfull} = 512$ for the chemically reacting flow.}}
\label{tab:autoenc_arch}
\begin{center}
\begin{tabular}{c c}
\begin{tabular}{|c c c |}
\multicolumn{3}{c}{Encoder network}\vspace{1mm}\\
\hline
\multicolumn{3}{|c|}{Convolution layers}\\
\hline
Layer & \multicolumn{2}{c|}{Number of filters}\\
\hline
1 & \multicolumn{2}{c|}{8}\\
2 & \multicolumn{2}{c|}{16}\\
3 & \multicolumn{2}{c|}{32}\\
4 & \multicolumn{2}{c|}{64}\\
\hline
\hline
\multicolumn{3}{|c|}{Fully-connected layers}\\
\hline
Layer & Input dimension & Output dimension  \\
\hline
\OWN{5} & \OWN{$\dofenclayer{\nconv}$} & \OWN{$\dofrom$}  \\
\hline
\end{tabular} & 
\begin{tabular}{|c c c |}
\multicolumn{3}{c}{Decoder network}\vspace{1mm}\\
\hline
\multicolumn{3}{|c|}{Fully-connected layers}\\
\hline
Layer & Input dimension & Output dimension  \\
\hline
\OWN{1} & \OWN{$\dofrom$} & \OWN{$\dofdeclayer{\nfull}$}\\
\hline
\hline
\multicolumn{3}{|c|}{Transposed-convolutional layers}\\
\hline
Layer & \multicolumn{2}{c|}{Number of filters} \\
\hline
2 & \multicolumn{2}{c|}{64}\\
3 & \multicolumn{2}{c|}{32}\\
4 & \multicolumn{2}{c|}{16}\\
5 & \multicolumn{2}{c|}{1}\\
\hline
\end{tabular}
\end{tabular}
\end{center}
\end{table}

Using the same snapshots as that to train the autoencoder, we also compute a
POD basis $\rdbasis$ following the steps discussed in Remark \ref{rem:pod}.

We compare the performance of four ROMs: 1) POD--Galerkin: linear-subspace
Galerkin projection (Eq.\ \eqref{eq:cont_romGal}) with the POD basis defining
$\rdbasis$, 2) POD-LSPG: linear-subspace LSPG projection (Eq.\
\eqref{eq:lspg_projLinear}) with the POD basis defining $\rdbasis$, 3)
\GalerkinNameDeepCap{}: \GalerkinName\ projection (Eq.\
\eqref{eq:cont_rom}) with the deep convolutional decoder defining
$\rdbasisnl$, and 4) \LSPGNameDeepCap{}: \LSPGName\ projection (Eq.\
\eqref{eq:lspg_proj}) with the deep convolutional decoder defining
$\rdbasisnl$. All ROMs enforce the initial condition by setting the reference
state according to Remark \ref{rem:IC}; this implies $\stateRef(\param) =
\initstate(\param)$ for the linear-subspace ROMs. 

To solve the O$\Delta$Es arising at each time instance, we 
apply Newton's method for POD--Galerkin, the Gauss--Newton method for
POD--LSPG, and the quasi-Newton methods proposed in Section \ref{sec:quasi}
for the \GalerkinNameDeep\ and \LSPGNameDeep.  We terminate the (quasi)-Newton
iterations when the residual norm drops below $10^{-6}$ of its initial guess
at that time instance; the initial guess corresponds to the solution at the
previous time instance.

To assess the ROM accuracy, we compute the relative $\ell^2$-norm
of the state error
\begin{equation}\label{eq:rel_err}
\text{relative error} = \left. \sqrt{\sum_{n=1}^{\nseq}  \|\states^n(\param)-\aprxstate^n(
	\param) \|_2^2} \middle/  \sqrt{\sum_{n=1}^{\nseq}
	\|\states^n(\param)\|_2^2} \right..
\end{equation}
We also include the projection error of the solution 
\begin{equation}\label{eq:proj_err}
\text{projection error} =\left. \sqrt{\sum_{n=1}^{\nseq}
\|(\identity - \rdbasis\rdbasis^T)(\states^n(\param)-\states^0(\param))]\|_2^2} \middle/  \sqrt{\sum_{n=1}^{\nseq}
	\|\states^n(\param)\|_2^2} \right.,
\end{equation}
onto both (1) the POD basis, in which case $\rdbasis$ is the POD basis
employed by POD--Galerkin and POD--LSPG, and (2) the optimal basis, in which
case $\rdbasis=\rdbasis^\star$, which consists of the first $\nstatered$ left
singular vectors of the snapshot matrix collected at the online-parameter
instance
$\snapshotMatArg{\param}$. 
\OWN{We refer to the} former metric \OWN{as the \textit{POD projection error}, as it} provides a lower bound for the POD--Galerkin and POD--LSPG
relative errors; \OWN{we refer to} the latter metric \OWN{as the \textit{optimal
projection error}, as it} provides an $\ell^2$-norm counterpart to
the Kolmogorov $\nstatered$-width. \OWN{Finally, we compute the projection of
the FOM solution on the trial manifold
\begin{equation}\label{eq:disc_opt_problem_optimal}
	\aprxstate_\star^n(
	\param) =
	\underset{\gstate\in
	\stateRef(\param)+ \manifold
	}{\arg\min} \left\|
\state^{n}-
\gstate
 \right\|_2
\end{equation}
and compute its relative error by employing
$\aprxstate^n\leftarrow\aprxstate_\star^n$
in Eq.~\eqref{eq:rel_err}; we refer to this as
the \textit{manifold projection error}.
}

\subsection{1D Burgers' equation}\label{sec:burg}
We first consider a parameterized inviscid Burgers'
equation~\cite{rewienski2003trajectory}, as it comprises a very simple
benchmark problem for which linear subspaces are ill suited due to its slowly
decaying Kolmogorov $n$-width. The governing system of partial differential
equations (with initial and boundary conditions) is
\begin{equation}\label{eq:burg_1d_cont}
\begin{split}
\frac{\partial \eqvar(\xoned,t;\param)}{\partial t} + \frac{\partial f (\eqvar(\xoned,t;\param))}{\partial \xoned} &= 0.02 e^{\paramelem{2} \xoned},  \quad \forall  x \in [0,100], \: \forall t \in [0, T] \vspace{2mm}\\
\eqvar(0,t;\param) &= \paramelem{1}, \quad  \forall t \in [0,T] \vspace{2mm}\\
\eqvar(\xoned,0) &= 1, \quad \forall \xoned \in [0,100],
\end{split}
\end{equation}
where the flux is $f(\eqvar) = 0.5 \eqvar^2$ and there are $\nparam=2$
parameters; thus, the intrinsic solution-manifold dimension is
$\intrinsicDim=3$ (see Remark \ref{rem:intrinsic}). We set the parameter domain to $\paramspace =[4.25, 5.5] \times
[0.015,0.03]$ and the final time to $T=35$.  We apply Godunov's scheme with
256 control volumes to spatially discretize Eq.\ \eqref{eq:burg_1d_cont},
which results in a system of parameterized ODEs of the form
\eqref{eq:govern_eq} with $\dofhf = 256$ spatial degrees of freedom and
initial condition $\initstate(\param)=\initstate=\ones$. For time
discretization, we use the backward-Euler scheme, which corresponds to a
linear multistep scheme with $k=1$, $\alpha_0=\beta_0=1$, $\alpha_1=-1$, and
$\beta_1=0$ in Eq.\ \eqref{eq:disc_res}. We consider a uniform time step
$\timestep = 0.07$, resulting in $\nseq = 500$ time instances.

\begin{figure}[!t]
\centering
    \begin{subfigure}[b]{0.45 \linewidth}
  \centering
    \includegraphics[width=\linewidth]{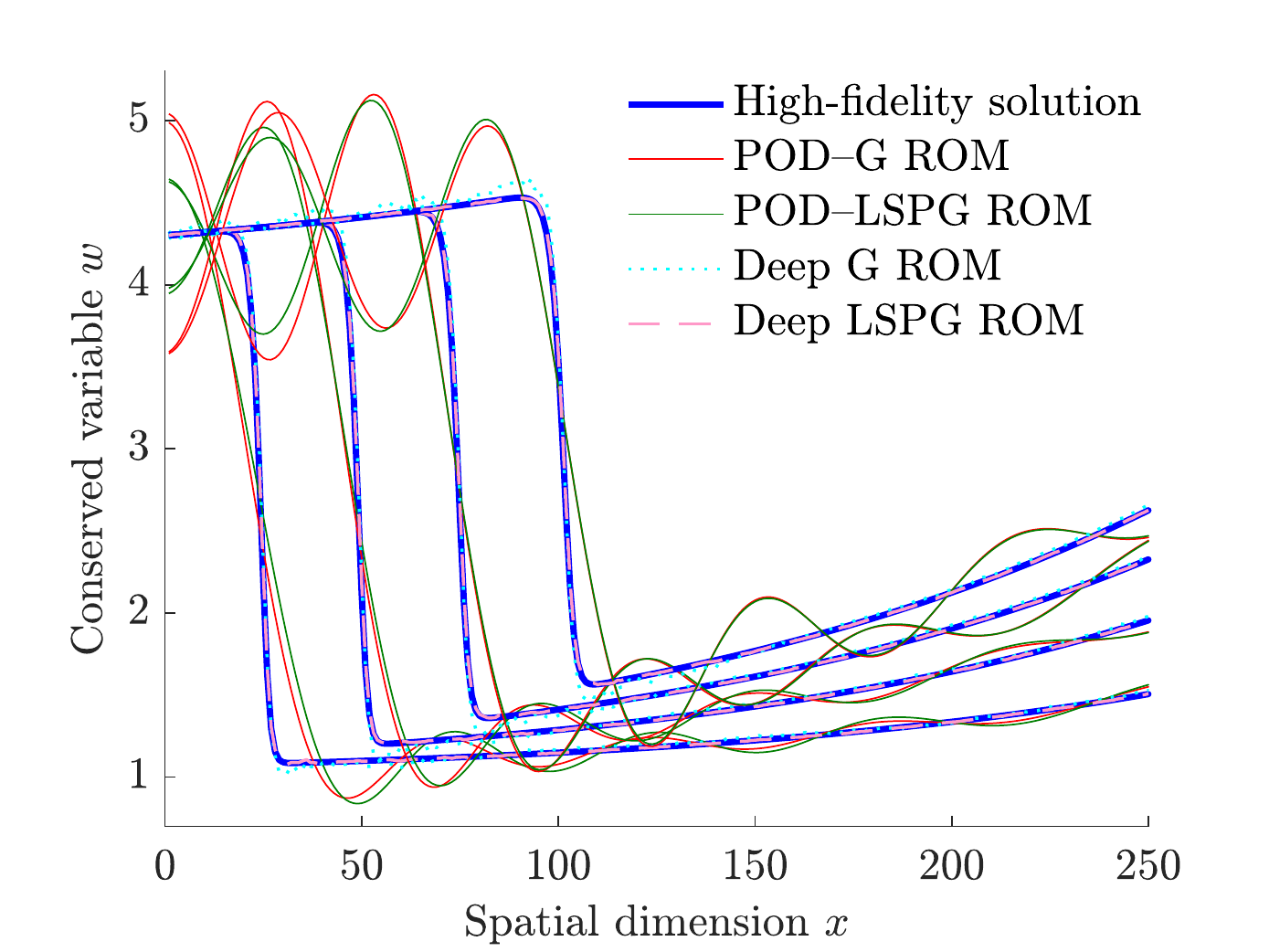}
			\caption{Online-parameter instance $\paramtest^{1} = \left(4.3, 0.021\right)$ with $\dofrom=10$}
  \end{subfigure}\hspace{5mm}
      \begin{subfigure}[b]{0.45 \linewidth}
  \centering
    \includegraphics[width=\linewidth]{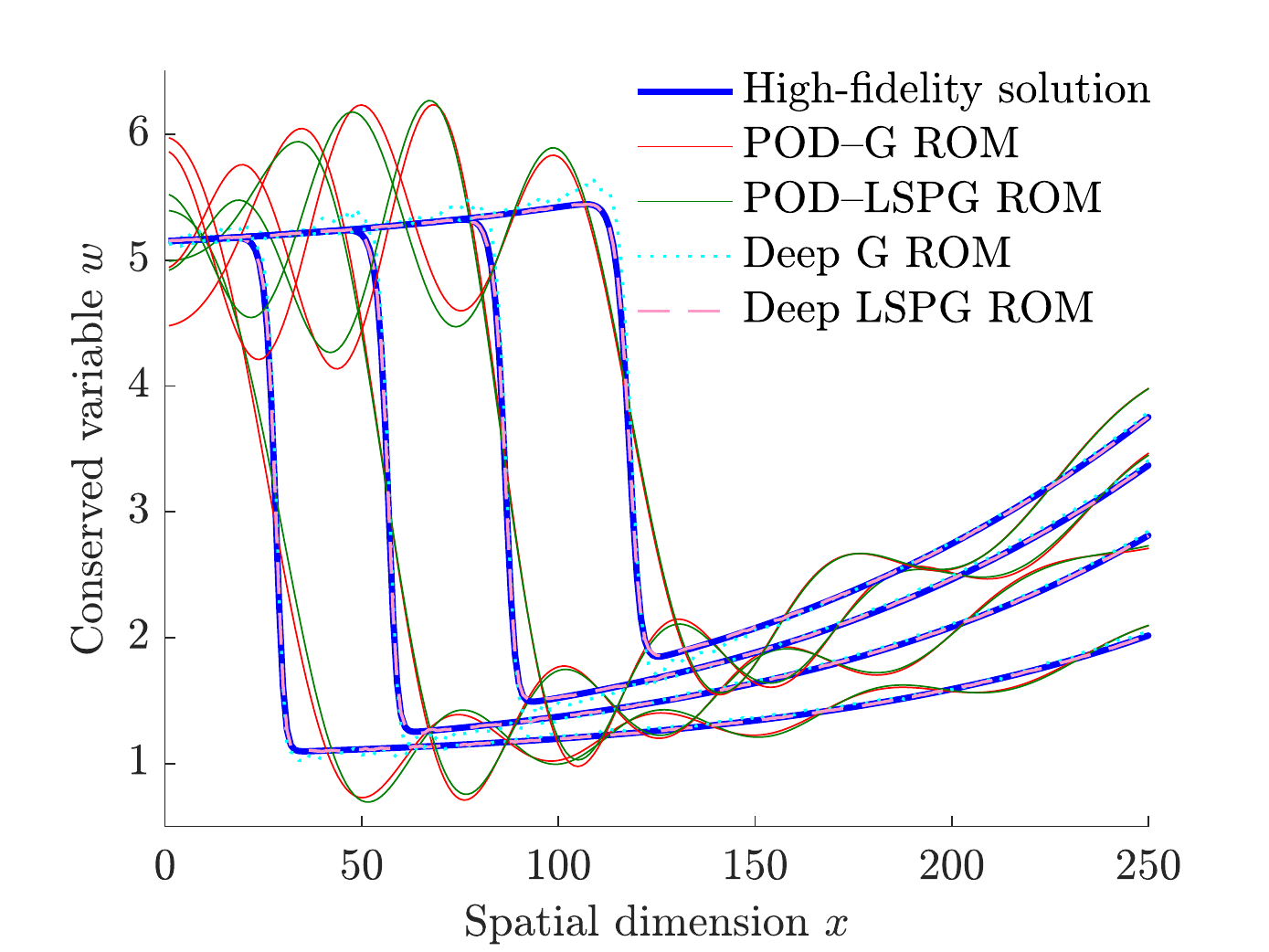}
			\caption{Online-parameter instance $\paramtest^{2} = \left(5.15, 0.0285\right)$ with $\dofrom=10$}
  \end{subfigure}\\
    \begin{subfigure}[b]{0.45 \linewidth}
  \centering
    \includegraphics[width=\linewidth]{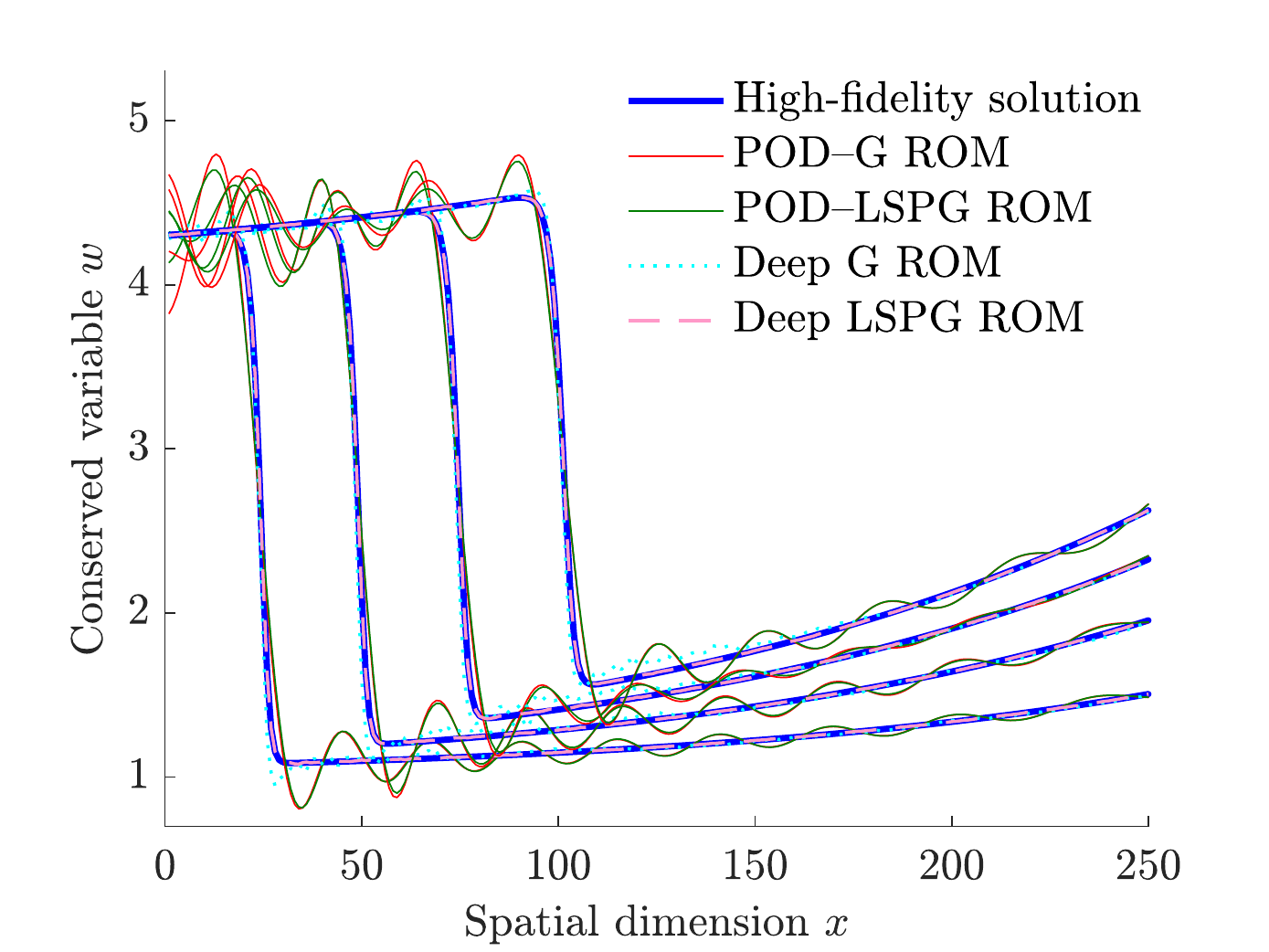}
			\caption{Online-parameter instance $\paramtest^{1} = \left(4.3, 0.021\right)$ with $\dofrom=20$}
  \end{subfigure}\hspace{5mm}
    \begin{subfigure}[b]{0.45 \linewidth}
  \centering
    \includegraphics[width=\linewidth]{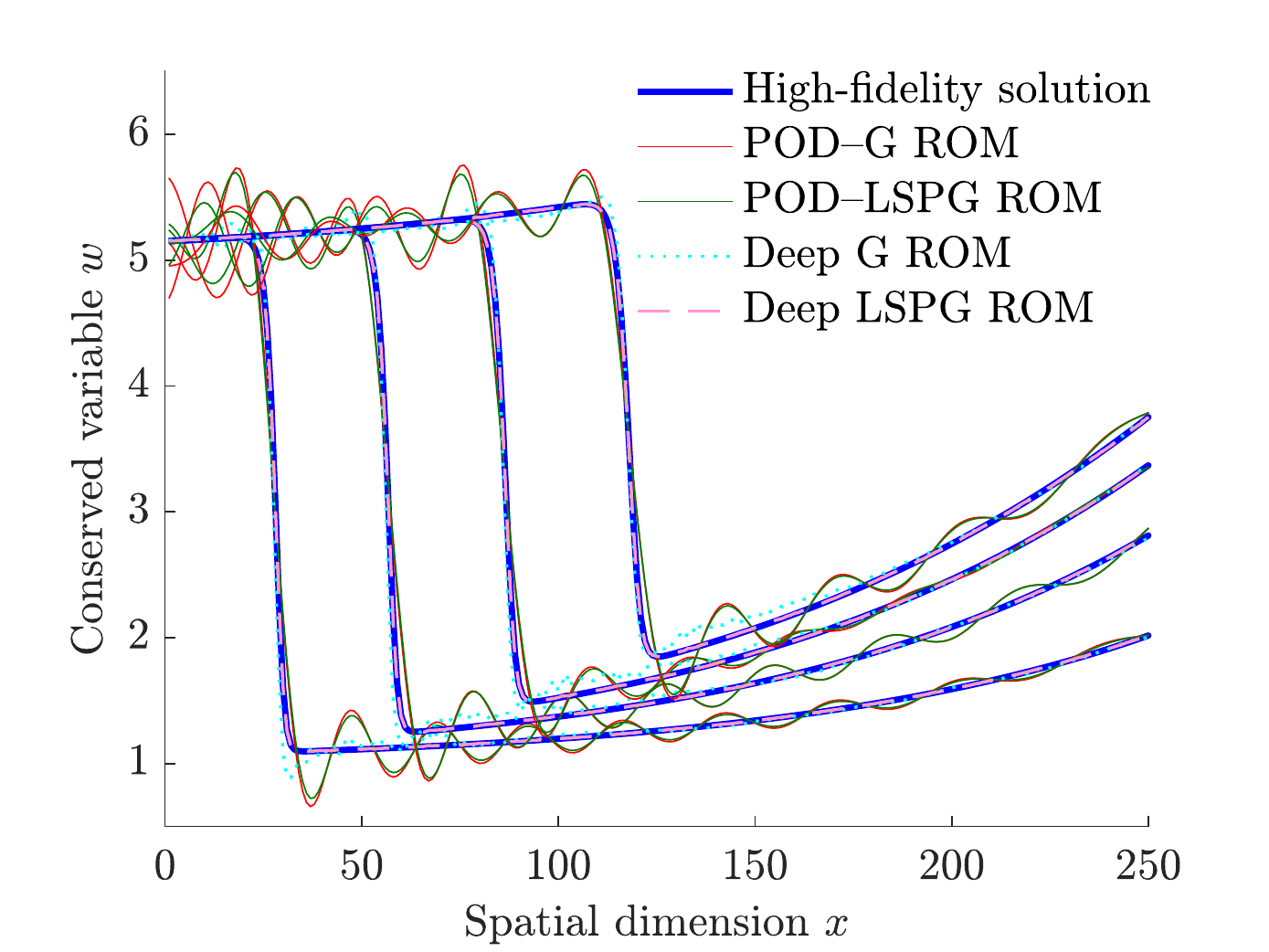}
			\caption{Online-parameter instance $\paramtest^{2} = \left(5.15,
			0.0285\right)$ with $\dofrom=20$}
  \end{subfigure}
\caption{\textit{1D Burgers' equation.} Online solutions at four different time
	instances $t=\{3.5, 7.0, 10.5, 14\}$ computed by solving the FOM
	(High-fidelity solution, solid blue line), POD--ROMs with
	Galerkin projection (POD--G ROM, solid red line) and LSPG projection
	(POD--LSPG ROM, solid green line), and the nonlinear-manifold ROMs equipped
	with the deep convolutional decoder associated with time-continuous residual
	minimization (Deep G ROM, dotted cyan
	line) and with time-discrete residual minimization (Deep LSPG ROM, dashed
	magenta line). All ROMs employ a reduced dimension of  $\dofrom=10$ (left) and $\dofrom = 20$ (right).}
\label{fig:burger_snapshot_fig}
\end{figure}

\begin{figure}[!t]
\centering
	\begin{subfigure}[b]{0.46 \linewidth}
	\centering
     	\includegraphics[width=\linewidth]{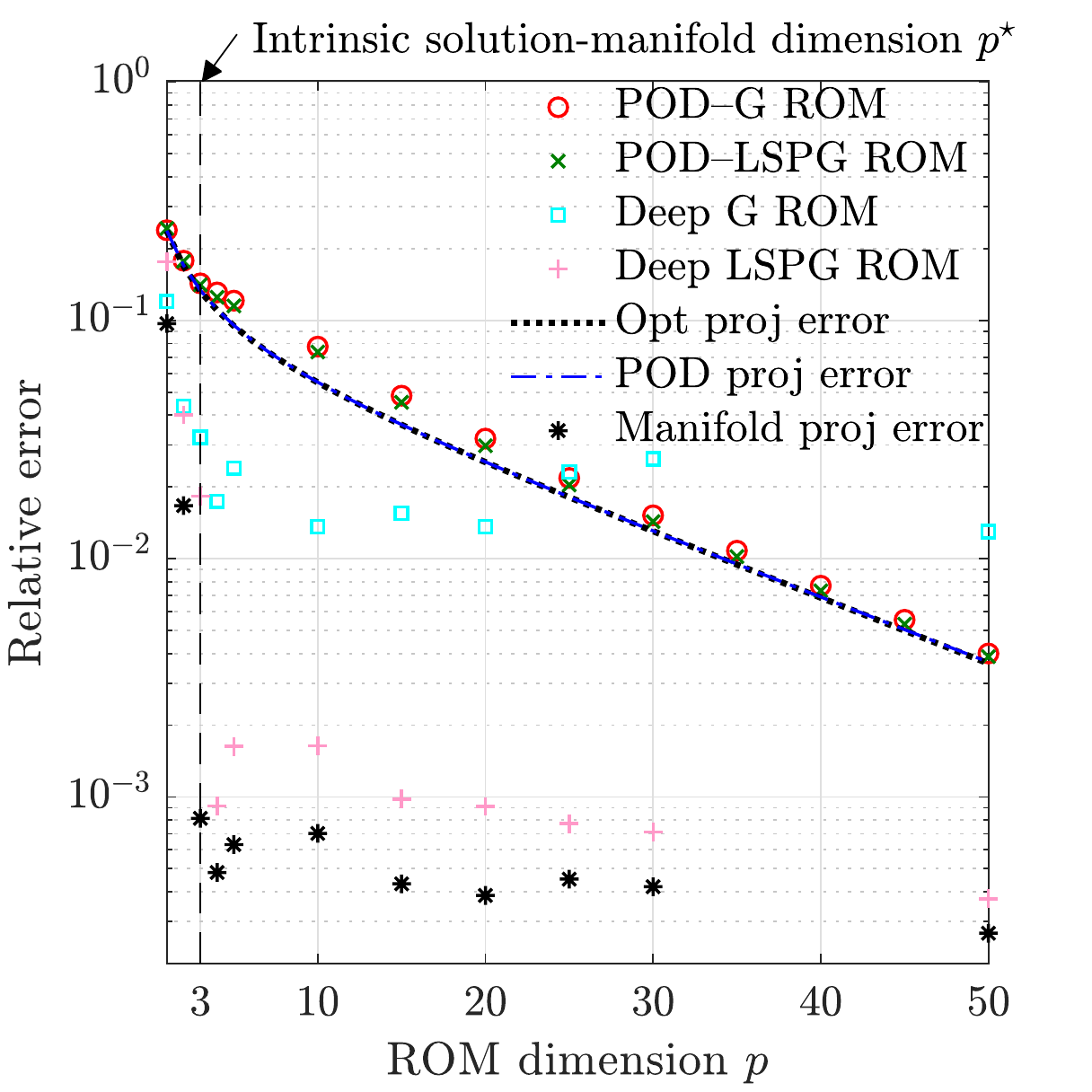}
     	\caption{Online-parameter instance 1, $\paramtest^{1} = \left(4.3, 0.021\right)$}
  	\end{subfigure}
    	\begin{subfigure}[b]{0.46 \linewidth}
	\centering
    	\includegraphics[width=\linewidth]{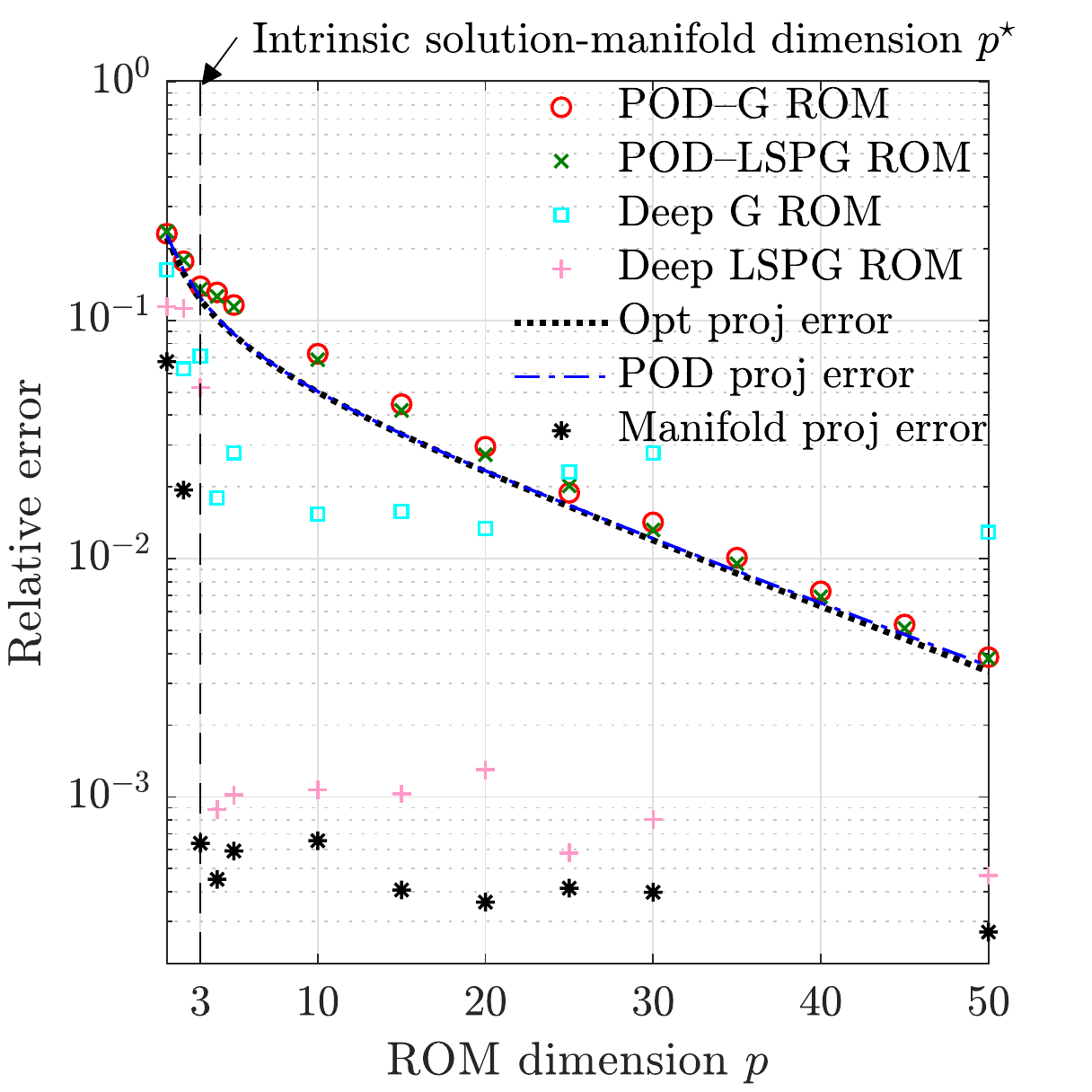}
	\caption{Online-parameter instance 2, $\paramtest^{2} = \left(5.15, 0.0285\right)$}
  	\end{subfigure}
\caption{\textit{1D Burgers' equation.} Relative errors of the ROM solutions
	(Eq.\ \eqref{eq:rel_err})  at online-parameter instances $\paramtest^{1}$
and $\paramtest^{2}$ for varying
	dimensions of the ROMs.
	The figure shows the relative errors of POD--ROMs with
	Galerkin projection (POD--G ROM, red circle) and LSPG projection (POD--LSPG
	ROM, green cross), and the nonlinear-manifold ROMs equipped with the decoder
	with time-continuous optimality (Deep G ROM, cyan square) and time-discrete
	optimality (Deep LSPG ROM, magenta plus sign). The figure also reports the
	projection error (Eq.~\eqref{eq:proj_err}) of the solution 
	onto the  POD basis
	(POD proj error, dashed blue line) and the 
	optimal basis (Opt proj error, \OWN{dotted} black line), \OWN{and reports the manifold projection error of the solution (Eq.~\eqref{eq:disc_opt_problem_optimal}, Manifold proj error, black asterisk)}. The vertical dashed line
	indicates the intrinsic solution-manifold dimension $\intrinsicDim$ (see
	Remark \ref{rem:intrinsic}).}
\label{fig:err_burg}
\end{figure}

For offline training, we set the training-parameter instances to
$\paramspacetrain=\{(4.25 + (1.25/9)i,\ 0.015+(0.015/7)j\OWN{)}\}_{i=0,\ldots,9;\
j=0,\ldots 7}$, resulting in $\ntrain=80$ training-parameter instances.
The restriction operator $\restrictionOp$ and prolongation operator
$\prolongationOp$ correspond to reshaping operators (without interpolation);
the restriction operator reshapes the state vector into a tensor corresponding
to the finite-volume grid such that $\nDim=1$, $\nspaceDim{1} = 256$, and $\nchannel=1$ in definitions \eqref{eq:restrictionOP}
and \eqref{eq:prolongationOp}.
Then, we apply Algorithm
\ref{alg:ae_train} with inputs specified above and the following SGD
hyperparameters: the fraction of snapshots to use for validation
$\nvalidationNNfrac=0.1$; Adam optimizer learning-rate strategy with an initial
uniform learning rate $\learningrate = 10^{-4}$; initial parameters $\nnparam^{(0)}$ computed via \OWN{Xavier initialization \cite{glorot2010understanding}}
for weights and
zero for biases; 
the number of minibatches determined by a fixed
batch size of $\nminibatchArg{\iterToBatch{i}} = 20$, $i=1,\ldots,\nbatch$; 
a maximum number of epochs $\nepoch = 1000$; and early-stopping enforced if
the loss on the validation set fails to decrease over 100 epochs.
 For the online stage, we consider two online-parameter instances $\paramtest^{1} =
 \left(4.3, 0.021\right)$, and  $\paramtest^{2}= \left(5.15, 0.0285\right)$,
 which are not included in $\paramspacetrain$.

Figure \ref{fig:burger_snapshot_fig} reports solutions at four different time
indices computed by using FOM O$\Delta$E and the four considered ROMs.  All
ROMs employ the same reduced dimension of  $\dofrom=10$ in Figure
\ref{fig:burger_snapshot_fig} (left)   and $\dofrom=20$ in Figure
\ref{fig:burger_snapshot_fig} (right).  These results demonstrate that
nonlinear-manifold ROMs \GalerkinNameDeepCap\ and \LSPGNameDeepCap\ produce
extremely accurate solutions, while the linear-subspace ROMs---constructed
using the same training data---exhibit significant errors. This is due to the
fundamental ill-suitedness of linear trial subspaces to advection-dominated
problems.

Figure \ref{fig:err_burg} reports the convergence of the relative error as a
function of reduced dimension $\nstatered$.  
These results illustrate the promise of employing nonlinear trial manifolds.
First, we note that employing a linear trial subspace immediately introduces
significant errors: the projection error onto the optimal basis of dimension
$\dofrom=\intrinsicDim=3$ is over 10\%; the optimal nonlinear trial manifold
(corresponding to the solution manifold) of the same dimension yields zero
error. Even with a reduced dimension $\nstatered=50$, the relative projection
error onto the optimal basis has not yet reached 0.1\%.  Second, we note that 
with a reduced dimension of only $\nstatered=5=\intrinsicDim+2$, the
\LSPGNameDeepCap\ ROM realizes relative errors near 0.1\%, while
linear-subspace ROMs (including the projection error with the optimal basis)
exhibit relative errors near 10\%. Thus, the proposed convolutional
autoencoder is very close to achieving the optimal performance of any
nonlinear trial manifold; the dimension it requires to realize sub-0.1\%
errors is only two larger than the intrinsic solution-manifold dimension
$\intrinsicDim=3$. We also observe that the POD--Galerkin and POD--LSPG ROMs
are also nearly able to achieve optimal performance for linear-subspace ROMs,
as their relative errors are close to the projection error onto the optimal
basis; unfortunately, this error remains quite large relative to the
\GalerkinNameDeep\ and \LSPGNameDeep\ ROMs. This
highlights that the proposed nonlinear-manifold ROMs are able to overcome the
fundamental limitations of linear-subspace ROMs on problems exhibiting a
slowly decaying Kolmogorov $n$-width.  We also observe that the POD basis is
very close to the optimal basis, which implies that the training data are
sufficient to accurately represent the online solution.

Additionally, we note that \LSPGNameDeepCap\ outperforms \GalerkinNameDeepCap\
for this problem, likely due to the fact that the residual-minimization
problem is defined over a finite time step rather \RO{than} time-instantaneously;
similar results have been shown in the case of linear trial subspaces, e.g.,
in Refs.~\cite{carlbergGalDiscOpt}. 

\OWN{
Finally, to illustrate the dependence of the proposed methods on the amount of
training data, we vary the number of training-parameter instances in the set
$\ntrain \in \{5,10,20,40,80,120\}$, and we use the resulting $\nsnap = \ntime
\ntrain$ snapshots to train both the autoencoder for the \GalerkinNameDeep\ and
\LSPGNameDeep\ ROMs and to compute the POD basis for the POD--Galerkin and POD--LSPG
ROMs. We fix the reduced dimension to $\dofrom = 5$. For offline training, we
define the maximum 
number of epochs and the early-stopping strategy in a manner that 
 the Adam optimizer employs the same number of gradient
computations to train each model. Figure
\ref{fig:burger_nparam} reports the relative error of the four ROMs for 
the diferent number of training-parameter instances.
This figure demonstrates that the
proposed \GalerkinNameDeep\ and \LSPGNameDeep\ ROMs yield accurate
results---around 2\% relative error for \GalerkinNameDeep\ and less than 1\%
relative error for \LSPGNameDeep---with only $\ntrain = 5$ parameter
instances. This highlights that---for this example---the proposed methods do
not require an excessive amount of training data relative to standard
POD--based ROMs.  This lack of demand for a large amount of data is likely due
to the use of convolutional layers in the autoencoder, which effectively
employ parameter sharing to reduce significantly the total number of
parameters in the autoencoder, and thus the amount of data needed for
training.
}

\begin{figure}[!h]
\centering
    \begin{subfigure}[b]{0.45 \linewidth}
    \centering
    \includegraphics[width=\linewidth]{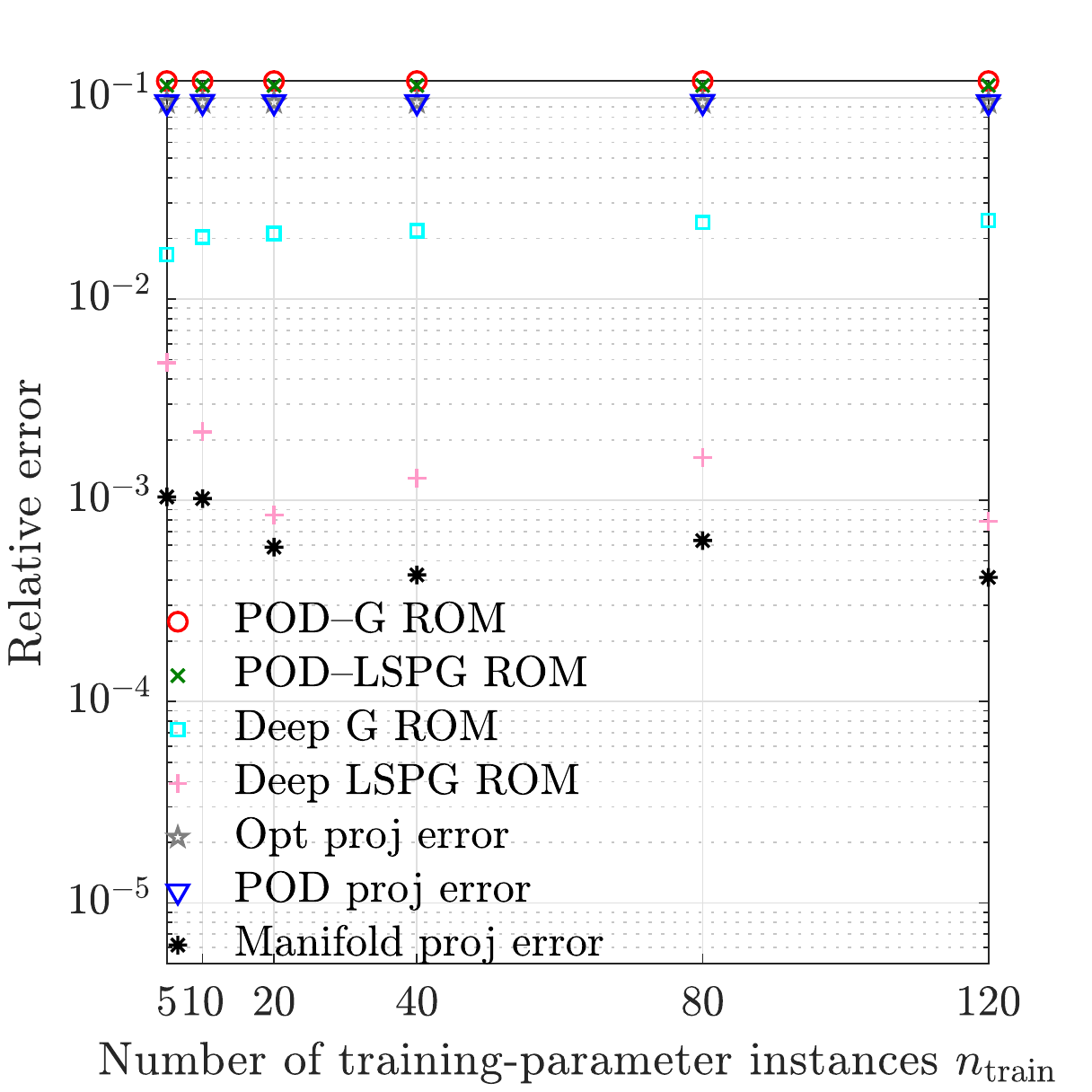}
    \caption{\OWN{Online-parameter instance $\paramtest^{1} = \left(4.3, 0.021\right)$ with $\dofrom=5$}}
    \end{subfigure}
    \begin{subfigure}[b]{0.45 \linewidth}
    \centering
    \includegraphics[width=\linewidth]{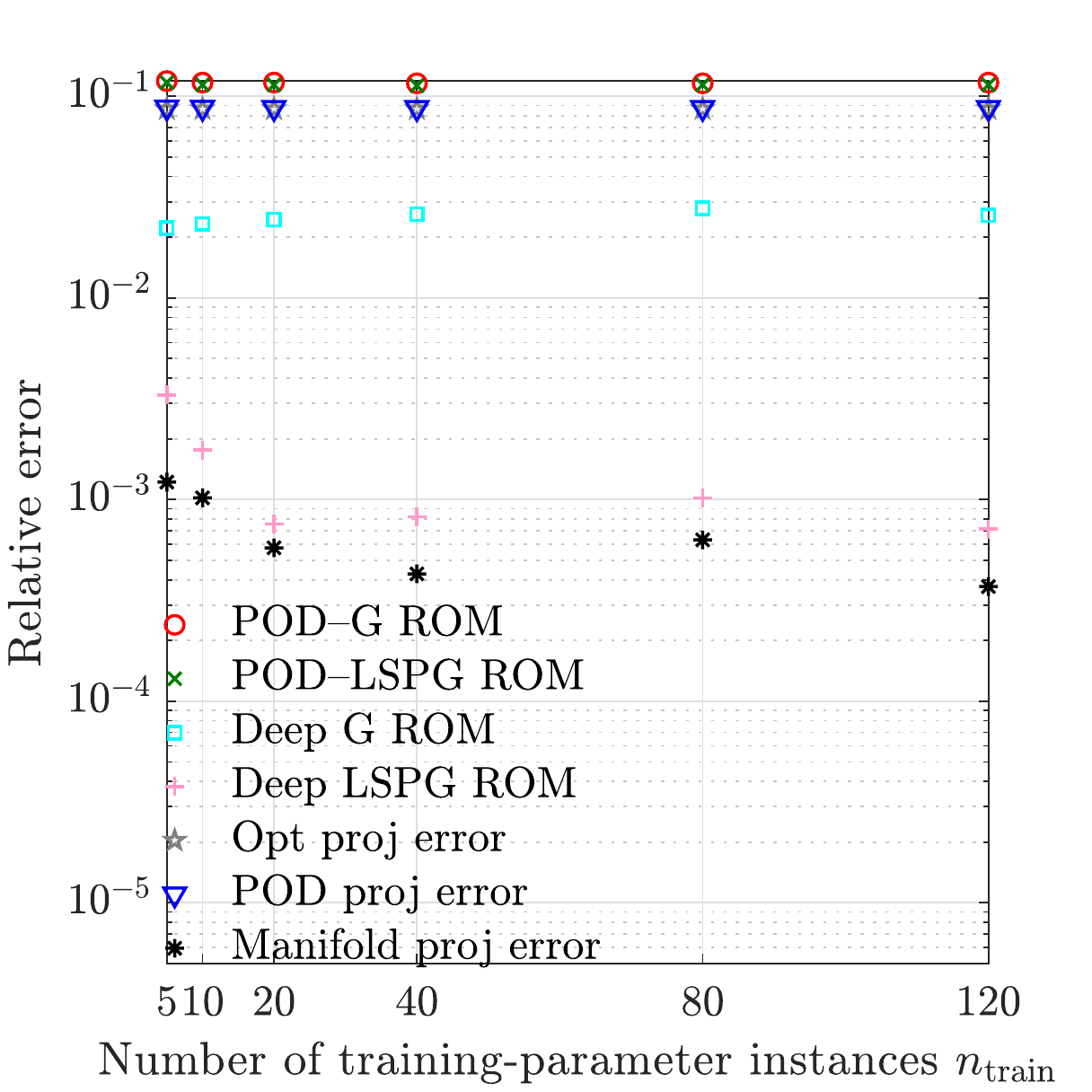}
    \caption{\OWN{Online-parameter instance $\paramtest^{2} = \left(5.15, 0.0285\right)$ with $\dofrom=5$}}
    \end{subfigure}
	\caption{\OWN{\textit{1D Burgers' equation: varying amount of training data.} Relative errors of the ROM
	solutions (Eq.\ \eqref{eq:rel_err})  at online-parameter instances
$\paramtest^{1}$ and
	$\paramtest^{2}$ for a varying the number training-parameter instances
	$\ntrain \in \{5,10,20,40,80,120\}$. The figure shows the relative errors of POD--ROMs with
	Galerkin projection (POD--G ROM, red circle) and LSPG projection (POD--LSPG
	ROM, green cross), and the nonlinear-manifold ROMs equipped with the decoder
	with time-continuous optimality (Deep G ROM, cyan square) and time-discrete
	optimality (Deep LSPG ROM, magenta plus sign). 
	The figure also reports the
	projection error (Eq.~\eqref{eq:proj_err}) of the solution 
	onto the  POD basis
	(POD proj error, blue triangle) and the 
	optimal basis (Opt proj error, gray pentagram), and reports the manifold projection error of the solution (Eq.~\eqref{eq:disc_opt_problem_optimal}, Manifold proj error, black asterisk).}}
\label{fig:burger_nparam}
\end{figure}

\subsection{Chemically reacting flow}
We now consider a model of the reaction of a premixed $\fuel$-air
flame at constant uniform pressure 
\cite{buffoni2010projection}. The evolution of the flame is modeled by the
nonlinear convection--diffusion--reaction equation
\begin{equation}\label{eq:adr}
\frac{\partial \eqvarvec(\xtwod, t; \param)}{\partial t} =  \nabla \cdot (\diffusivity \nabla \eqvarvec(\xtwod, t;\param)) - \velocity \cdot \nabla \eqvarvec(\xtwod, t;\param) + \reactionvec(\eqvarvec(\xtwod, t;\param);\param) \qquad \text{in } \spatialspace \times \paramspace \times [0, T], 
\end{equation}
where $\nabla$ denotes the gradient with respect to physical space,
$\diffusivity$ denotes the molecular diffusivity, $\velocity$ denotes the
velocity field, and 
\begin{equation*}
\eqvarvec(\xtwod, t;\param) \equiv [\eqvar_T(\xtwod, t;\param), \eqvar_{\fuel}(\xtwod, t;\param), \eqvar_{\oxi}(\xtwod, t;\param)), \eqvar_{\product}((\xtwod, t;\param))]\tran \in \mathbb{R}^{4}
\end{equation*}
denotes the thermo-chemical composition vector consisting of the temperature
$\eqvar_{T}(\xtwod, t;\param)$
and the mass fractions of chemical species
$\fuel, \oxi$,
and $\product$, i.e.,
$\eqvar_i(\xtwod, t;\param)$ for $i \in \{\fuel,
\oxi, \product \}$. 
The nonlinear reaction source term $\reactionvec(\eqvarvec(\xtwod,
t;\param);\param) \equiv [\reaction_{T}(\eqvarvec;\param),
\reaction_{\fuel}(\eqvarvec;\param), \reaction_{\oxi}(\eqvarvec;\param),
\reaction_{\product}(\eqvarvec;\param)]\tran$ is of Arrhenius type and
is defined as
\begin{equation*}
\begin{split}
\reaction_{T}(\eqvarvec;\param) &= Q \reaction_{\product}(\eqvarvec;\param)\\
\reaction_{i}(\eqvarvec;\param) &= -\nu_i\left(
	\frac{W_i}{\rho}\right)\left(\frac{\rho \eqvar_{\fuel}}{W_{\fuel}}
	\right)^{\nu_{\fuel}} \left(\frac{\rho \eqvar_{\oxi}}{W_{\oxi}}
	\right)^{\nu_{\oxi}} A \exp\left(- \frac{E}{R \eqvar_T} \right), \quad i
	\in\{
		\fuel, \oxi, \product\},
\end{split}
\end{equation*}
where $(\nu_{\fuel}, \nu_{\oxi}, \nu_{\product}) = (2,1,-2)$ denote
stoichiometric coefficients, $(W_{\fuel}, W_{\oxi}, W_{\product}) = (2.016,
31.9, 18)$ denote molecular weights with units g$\cdot$mol$^{-1}$, $\rho =
1.39 \times 10^{-3}$ g $\cdot$ cm$^{-3}$ denotes the density mixture, $R =
8.314$ J $\cdot$ mol$^{-1} \cdot$ K$^{-1}$ denotes the universal gas constant,
and $Q = 9800$K denotes the heat of the reaction. The $\nparam=2$ parameters
correspond to $\param=(A,E)$, which 
are the
pre-exponential factor $A$ and the activation energy $E$; we set the
corresponding parameter domain to
$\paramspace = [2.3375 \times 10^{12}, 6.5\times 10^{12}] \times
[5.625 \times 10^3,9\times 10^{3}]$.
\RT{Thus, }the intrinsic solution-manifold dimension is
$\intrinsicDim=3$ (see Remark \ref{rem:intrinsic}).
We set the molecular diffusivity to $\kappa
= 2\;$cm$^2\cdot$s$^{-1}$, and the velocity field set to be constant and
divergence-free with $\velocity = [50\; \text{cm}\cdot\text{s}^{-1},\; 0]^T$.
We set the final time to $T=0.06\;\text{s}$.

\begin{figure}[h]
\centering
\includegraphics{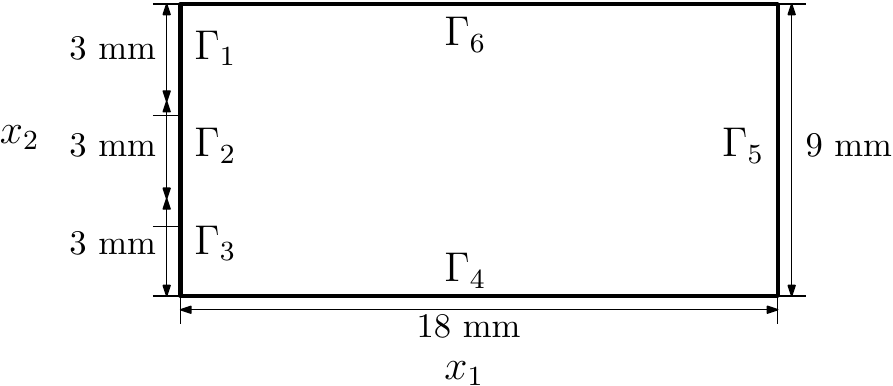}
\caption{\textit{Chemically reacting flow}. The geometry of the spatial domain $\spatialspace$.}
\label{fig:geo_chem}
\end{figure}

Figure \ref{fig:geo_chem} reports the geometry of the spatial domain.  On the
inflow boundary $\Gamma_2$, we impose Dirichlet boundary conditions
$\eqvar_{\fuel}=0.0282$, $\eqvar_{\oxi} = 0.2259$ and $\eqvar_{\product}=0$
for the chemical-species mass fractions and $\eqvar_{T}=950$K for the
temperature. On boundaries $\Gamma_1$ and $\Gamma_3$, we impose homogeneous
Dirichlet boundary conditions for the chemical-species mass fractions, and we
set the temperature $\eqvar_T = 300\;\text{K}$. On $\Gamma_4, \Gamma_5,$ and
$\Gamma_6$, we impose  homogeneous Neumann conditions on the temperature and
mass fractions. We consider a uniform initial condition corresponding to a
domain that is empty of chemical species such that
$\eqvar_{\fuel}=\eqvar_{\oxi}=\eqvar_{\product}=0$ and we set the temperature
to $\eqvar_T = 300\;\text{K}$.

We employ a finite-difference method 
with 65 grid points in the horizontal direction and 32 grid
points in the vertical direction
to spatially discretize Eq.\ \eqref{eq:adr},
which results in a system of parameterized ODEs of the form
\eqref{eq:govern_eq} with $\dofhf = 8192$ degrees of freedom.

For time discretization, we employ the second order backward difference scheme
(BDF2), which corresponds to a linear multistep scheme with 
$k=2$, $\alpha_0=1$, $\alpha_1=-\frac{4}{3}$, $\alpha_2=\frac{1}{3}$,
$\beta_0=\frac{2}{3}$, and $\beta_1 = \beta_2 = 0$ in Eq.\
\eqref{eq:disc_res}.  We consider a uniform time step $\timestep = 10^{-4}$,
resulting in $\nseq = 600$ time instances. 

For offline training, we set the training-parameter instances to
$\paramspacetrain=\{(2.3375 \times 10^{12} + (3.2725\times 10^{12}\OWN{/7)i},\ 5.625
\times 10^3+(3.375\times 10^3/7)j\OWN{)}\}_{i=0,\ldots,7;\
j=0,\ldots 7}$, resulting in $\ntrain=64$ training-parameter instances.  The
restriction operator $\restrictionOp$ and prolongation operator
$\prolongationOp$ correspond to reshaping operators (without interpolation);
the \OWN{restriction} operator reshapes the state vector into a tensor corresponding
to the finite-difference grid such that $\nDim=2$, $\nspaceDim{1} = 64$,
$\nspaceDim{2}=32$, and $\nchannel=4$ in definitions \eqref{eq:restrictionOP}
and \eqref{eq:prolongationOp}. We emphasize that each of the 4 unknown
variables is considered a different channel for the input data.
Then, we apply Algorithm \ref{alg:ae_train} with inputs specified above and
the 
following SGD
hyperparameters: the fraction of snapshots to use for validation
$\nvalidationNNfrac=0.1$; Adam optimizer learning-rate strategy with an initial
uniform learning rate $\learningrate = 10^{-4}$; initial parameters
parameters $\nnparam^{(0)}$ computed via He initialization \cite{he2015delving}
 for weights and
zero for biases; 
number of minibatches determined by a fixed
batch size of $\nminibatchArg{\iterToBatch{i}} = 20$, $i=1,\ldots,\nbatch$; 
a maximum number of epochs $\nepoch = 5000$; and early-stopping enforced if
the loss on the validation set fails to decrease over 500 epochs.
For the online stage, we consider \RT{parameter instances
$\paramtest^{1} =
(2.5\times10^{12},\, 5.85\times10^{3})$ and $\paramtest^{2} =
(3.2\times10^{12},\, 7.25\times10^{3})$ that are} not included in
$\paramspacetrain$.

\begin{figure}[!t]
\centering
\includegraphics[width=0.475\linewidth]{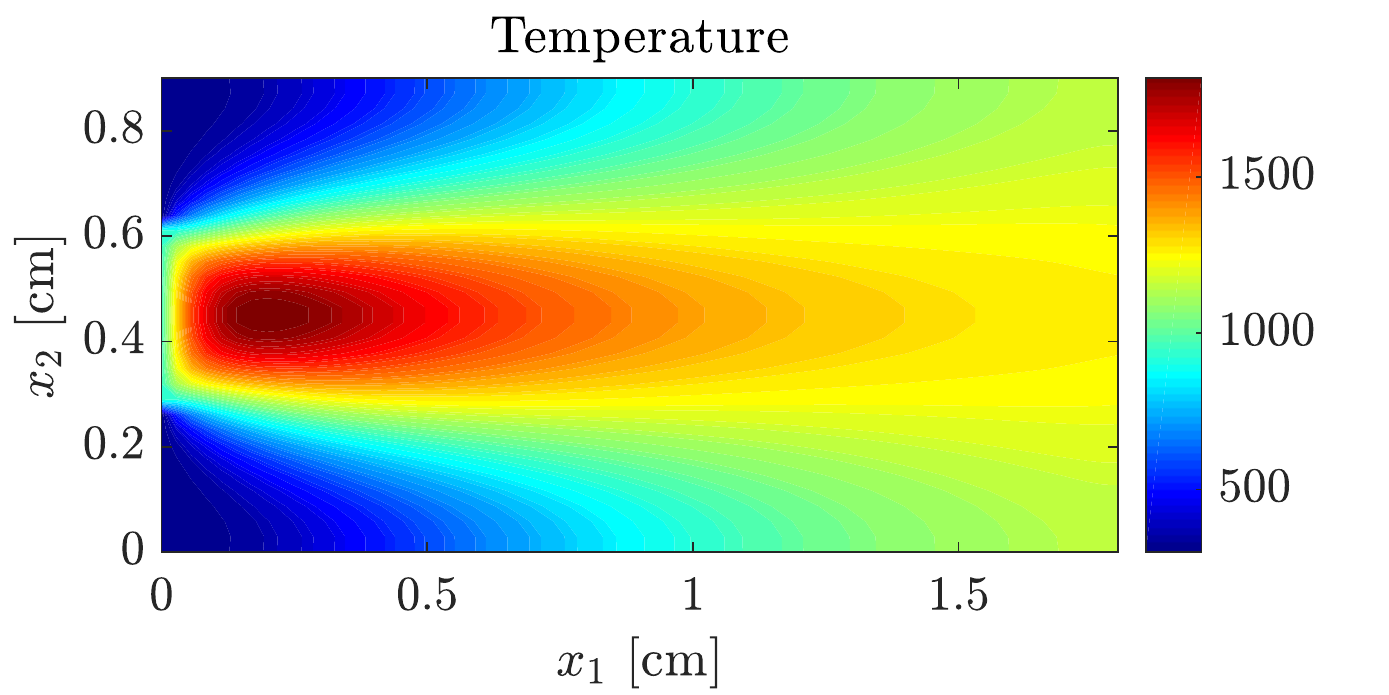}
\includegraphics[width=0.475\linewidth]{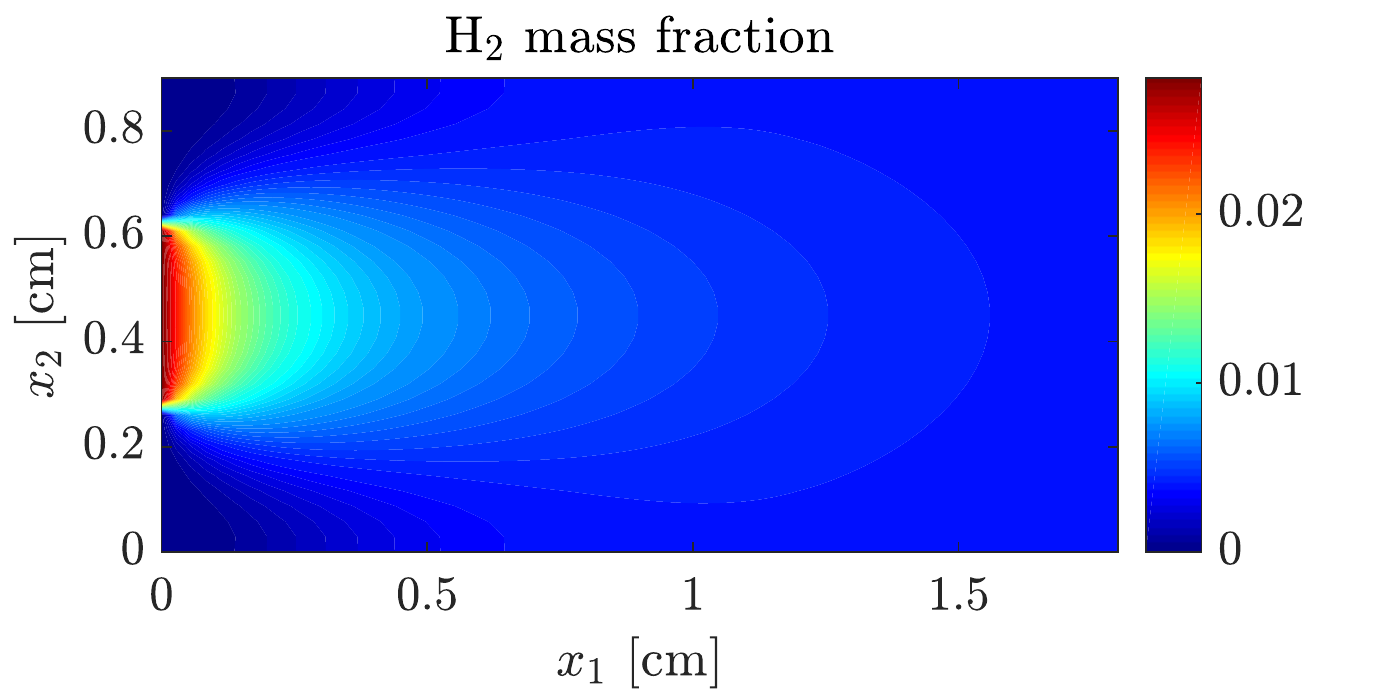}\vspace{3mm}\\
\includegraphics[width=0.475\linewidth]{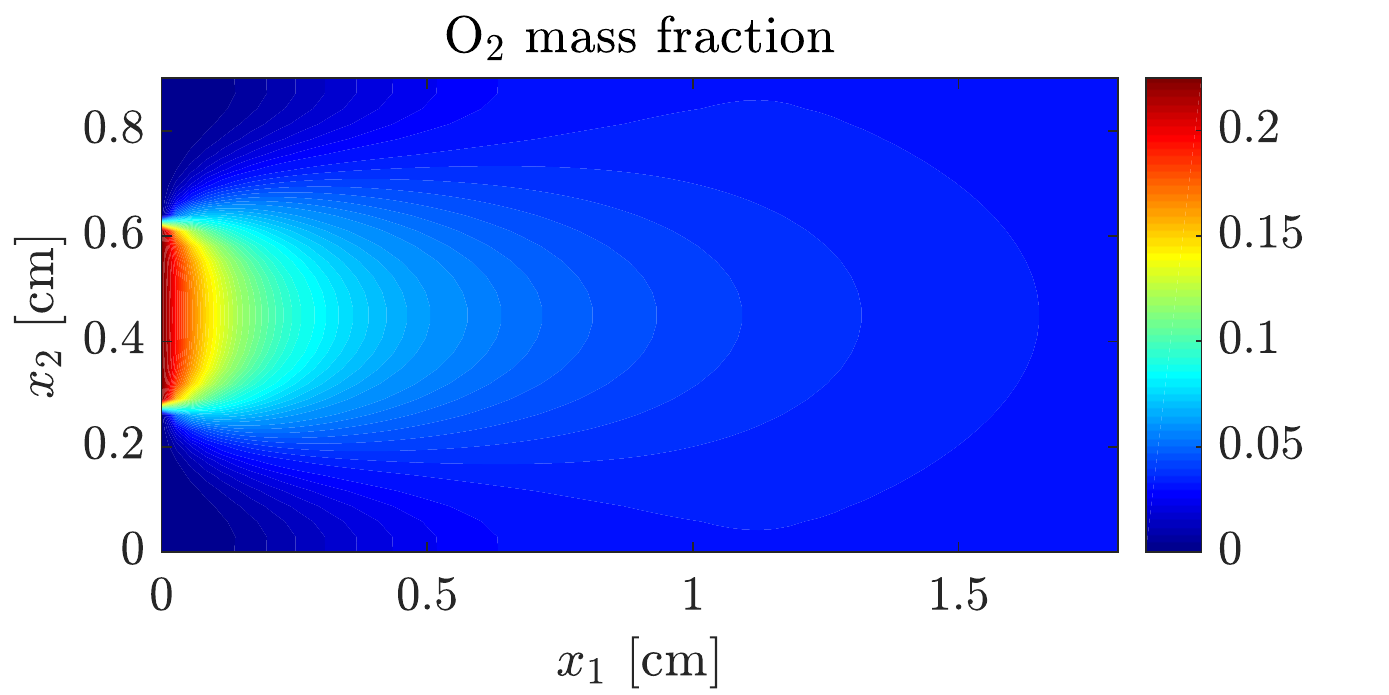}
\includegraphics[width=0.475\linewidth]{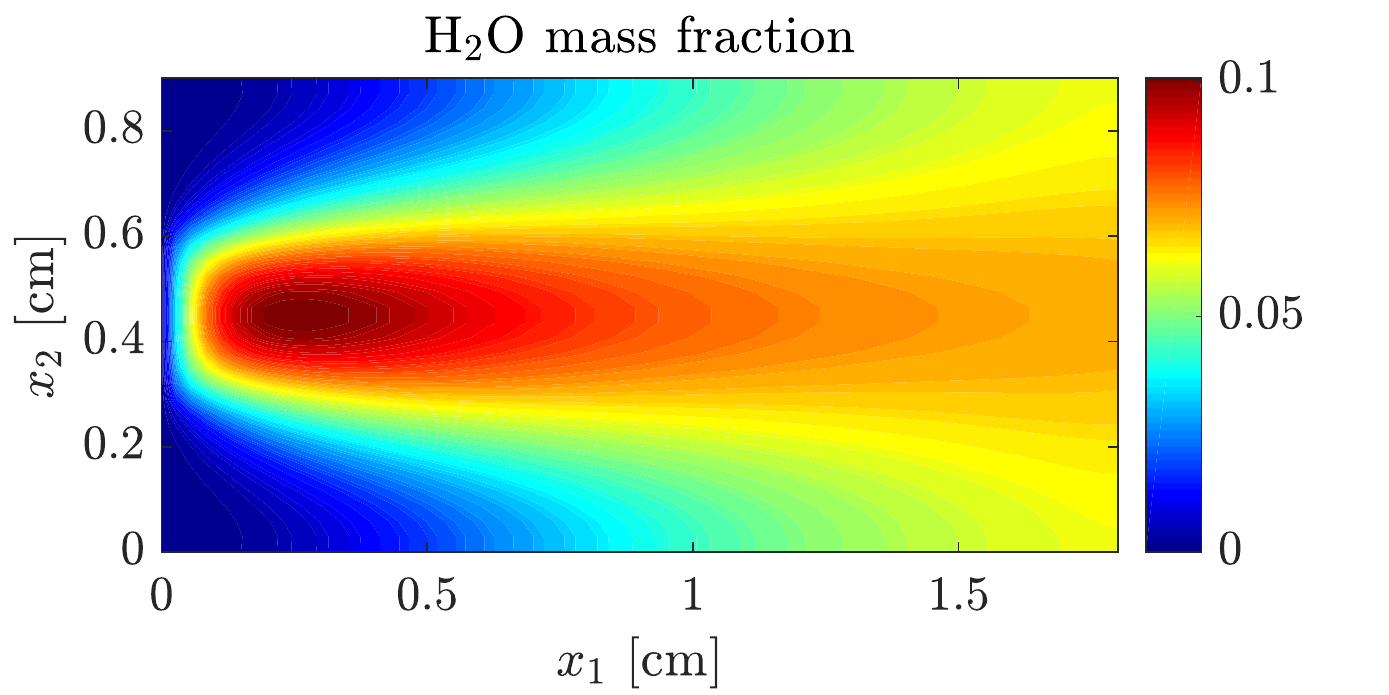}
\caption{\textit{Chemically reacting flow}. FOM solutions of the temperature,
	mass fractions of $\fuel$, $\oxi$, and $\product$ at online-parameter
	instance $\paramtest^{1} =  (2.5\times10^{12},\, 5.85\times10^{3})$ at $t=0.06$.}
\label{fig:fom_param1}
\end{figure}

\begin{figure}[!t]
\centering
\includegraphics[width=0.475\linewidth]{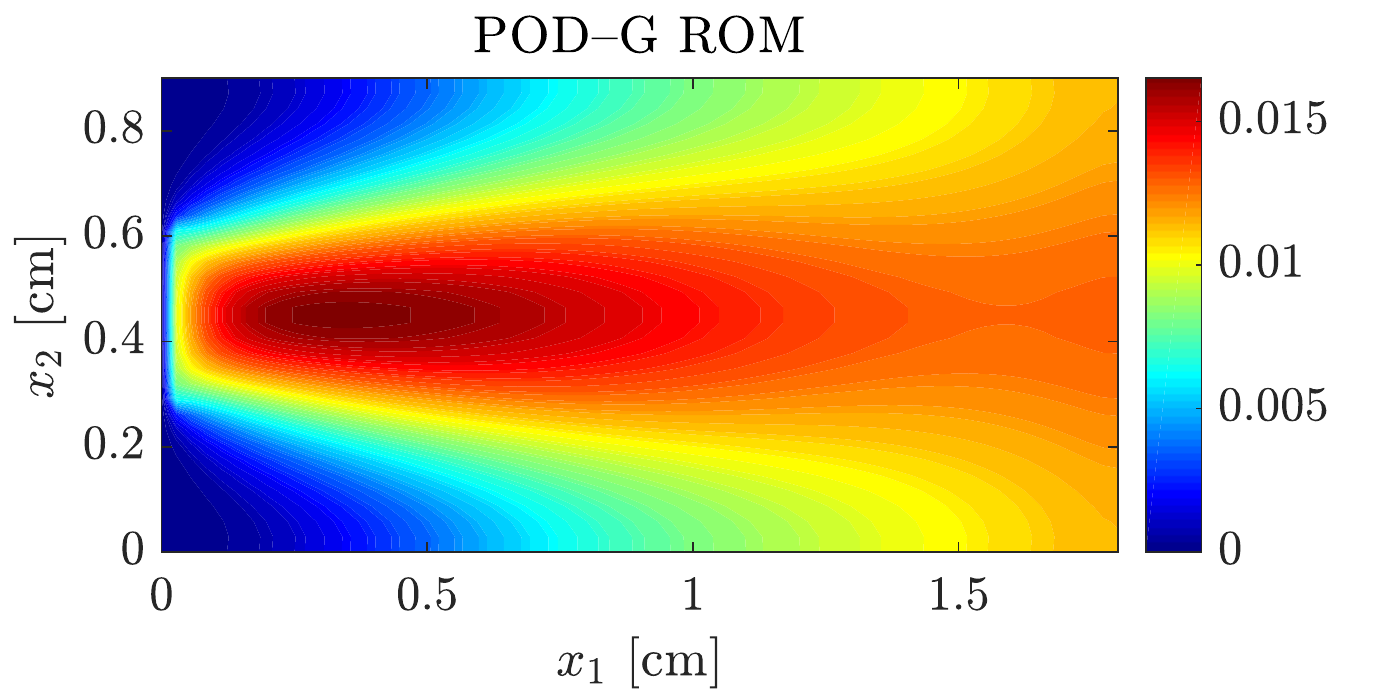}
\includegraphics[width=0.475\linewidth]{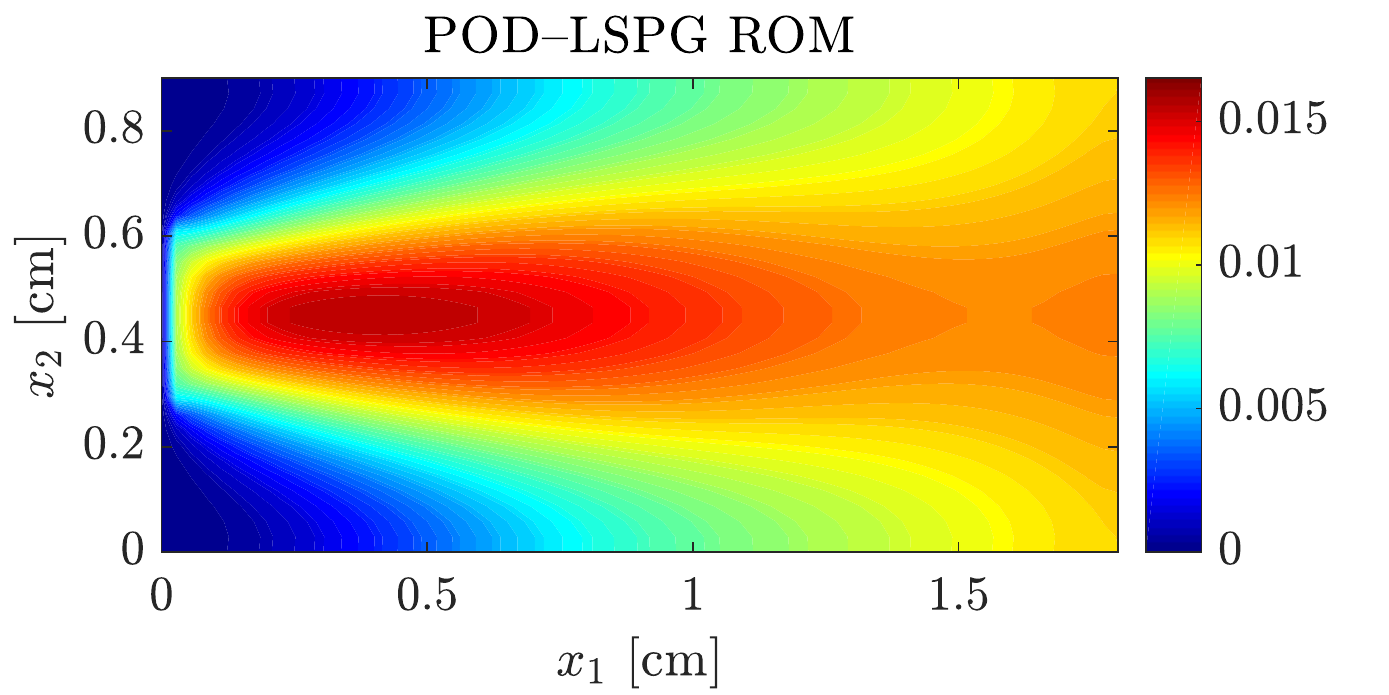}\vspace{3mm}\\
\includegraphics[width=0.475\linewidth]{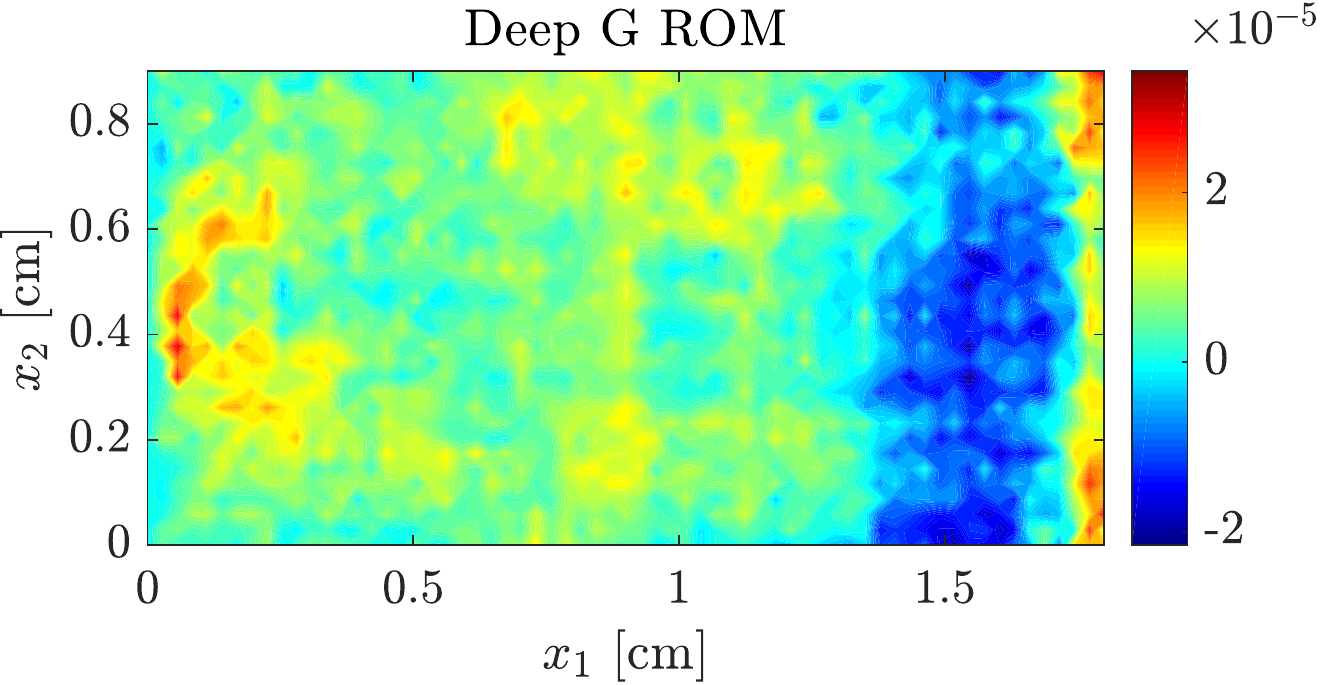}
\includegraphics[width=0.475\linewidth]{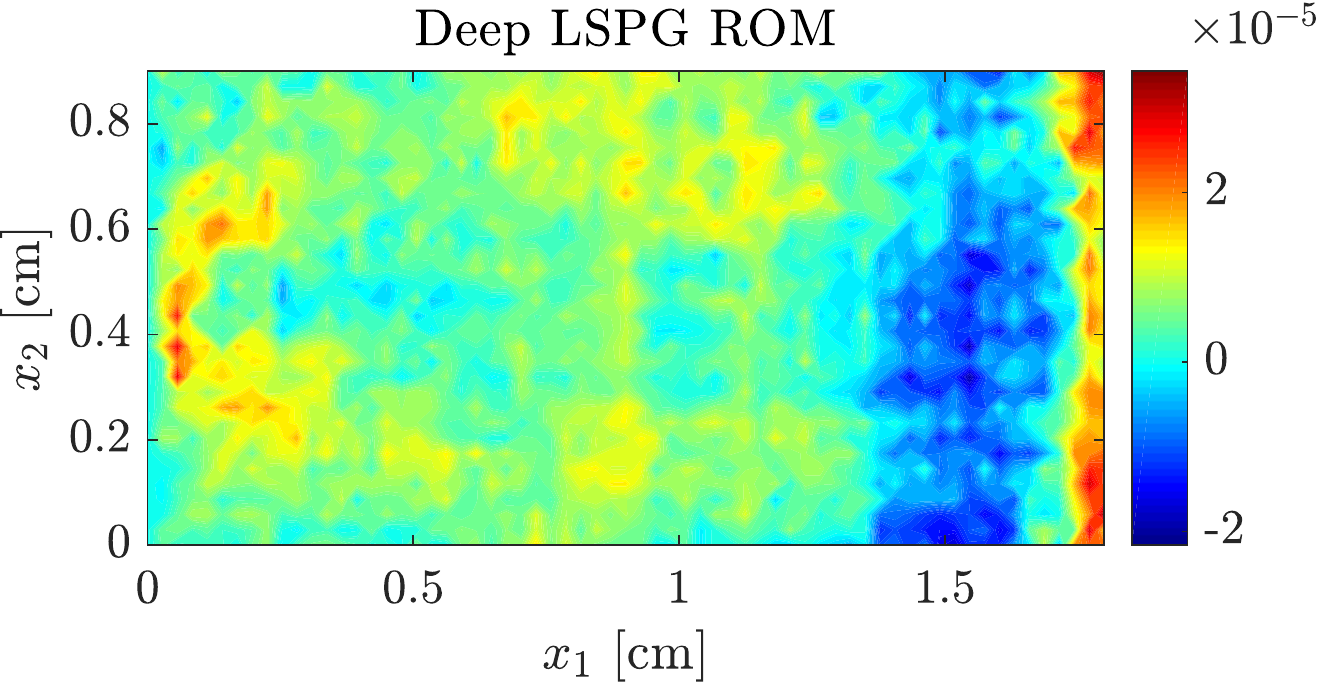}\\
\caption{\textit{Chemically reacting flow}. Relative errors of the temperature
	solution computed using ROMs at online-parameter instance $\paramtest^{1} =
	(2.5\times10^{12},\, 5.85\times10^{3})$ at $t=0.06$. The dimensions of the
	generalized coordinates in both linear-subspace ROMs (top row) and
	nonlinear-manifold ROMs (bottom row) are the same $\nstatered=3$. The
	colormaps on the each row are in the same scale.}
\label{fig:rom_T_param1}
\end{figure}

Figure \ref{fig:fom_param1} reports the FOM solution for online-parameter
instance $\paramtest^{1}$ and final time $t=T=0.06\;\text{s}$. Figures
\ref{fig:rom_T_param1} and \ref{fig:rom_H2_param1} report relative errors of
the temperature solutions and the $\fuel$ mass fraction solutions  at the
final time $t=T$ computed using all considered ROMs with the same reduced
dimension $\nstatered=3$. Again, we observe that the proposed
nonlinear-manifold ROMs produce significantly lower errors as compared with
the linear-subspace ROMs.  Figure \ref{fig:err_adr} reports the convergence of
the relative error as a function of reduced dimension $\nstatered$ along with
the projection errors Eq.~\ref{eq:proj_err} onto the POD basis and the optimal
basis.  This figure again shows that the proposed manifold ROMs
significantly outperform the linear-subspace ROMs, as well as the projection
error onto both the POD and optimal
bases. 
First, we observe that employing a linear trial subspace again introduces
significant errors, as the projection error onto the optimal basis of dimension
$\dofrom=\intrinsicDim=3$ is around 5\%; the optimal nonlinear trial manifold
(corresponding to the solution manifold) of the same dimension yields zero
error. 
Second, we note that for a reduced dimension of
only $\nstatered=\intrinsicDim=3$, both the \GalerkinNameDeepCap\ and \LSPGNameDeepCap\ ROMs
yield relative errors of less than 0.1\%, while the projection errors onto the
POD and optimal bases exceed 5\%, and the linear-subspace ROMs
yield relative errors in excess of 40\%. 
Thus, the proposed convolutional
autoencoder nearly achieves optimal performance, as 
the dimension it requires to realize sub-0.1\%
errors is exactly equal to the intrinsic solution-manifold dimension
$\intrinsicDim=3$.

We also observe that there is a significant gap between the performance of the
linear-subspace ROMs and the POD projection error; this gap is attributable to
the closure problem.  We also observe that---in contrast with the previous
example---there is a non-trivial gap between the POD projection error and the
optimal-basis projection error, which suggests that the online solution is
less well represented by the training data than in the previous case.

Additionally, we observe that for a reduced dimension of $\nstatered=10$, the
projection error onto the optimal basis begins to become smaller than the
relative error of the proposed \GalerkinNameDeepCap\ and \LSPGNameDeepCap\
ROMs; however this occurs for (already small) errors less than 0.1\%. The
error saturation of the \GalerkinNameDeepCap\ and \LSPGNameDeepCap\ ROMs is
likely due to the fact that the online solution cannot be perfectly
represented using the training data, as illustrated by the gap between the
projection errors associated with POD and the optimal basis.

\begin{figure}[!t]
\centering
\includegraphics[width=0.475\linewidth]{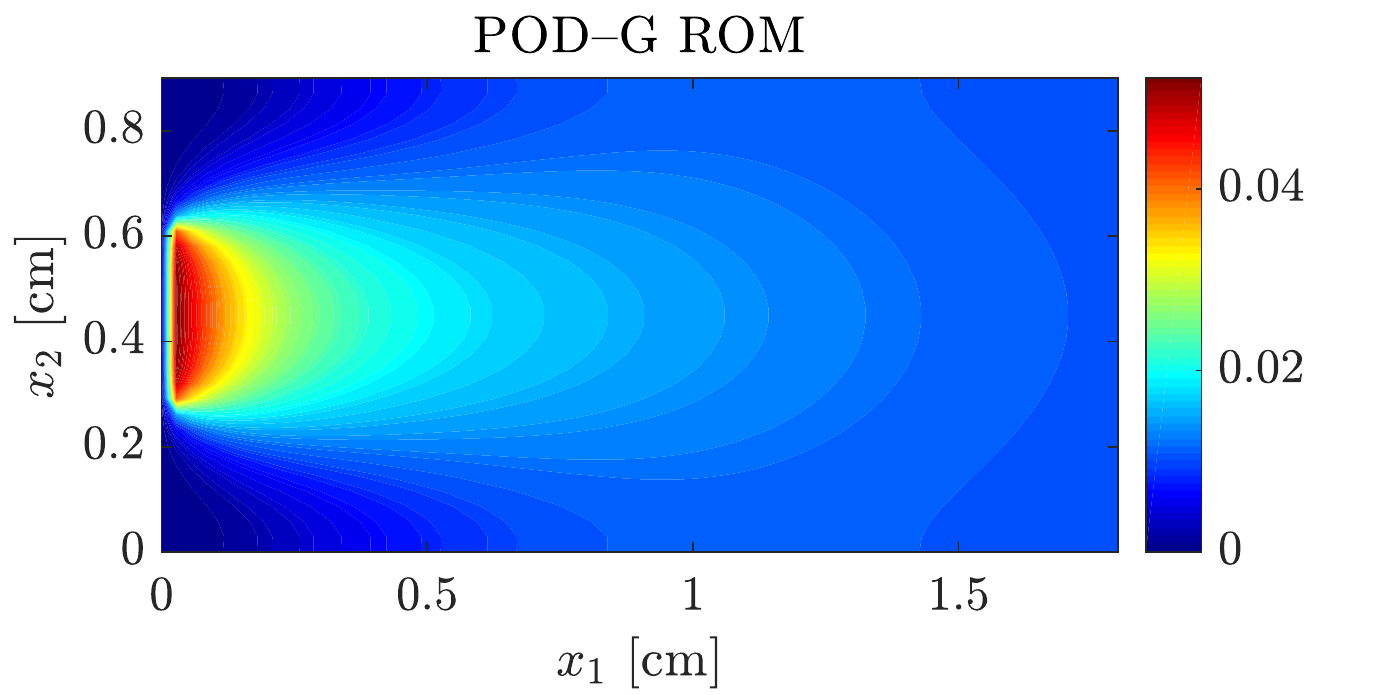}
\includegraphics[width=0.475\linewidth]{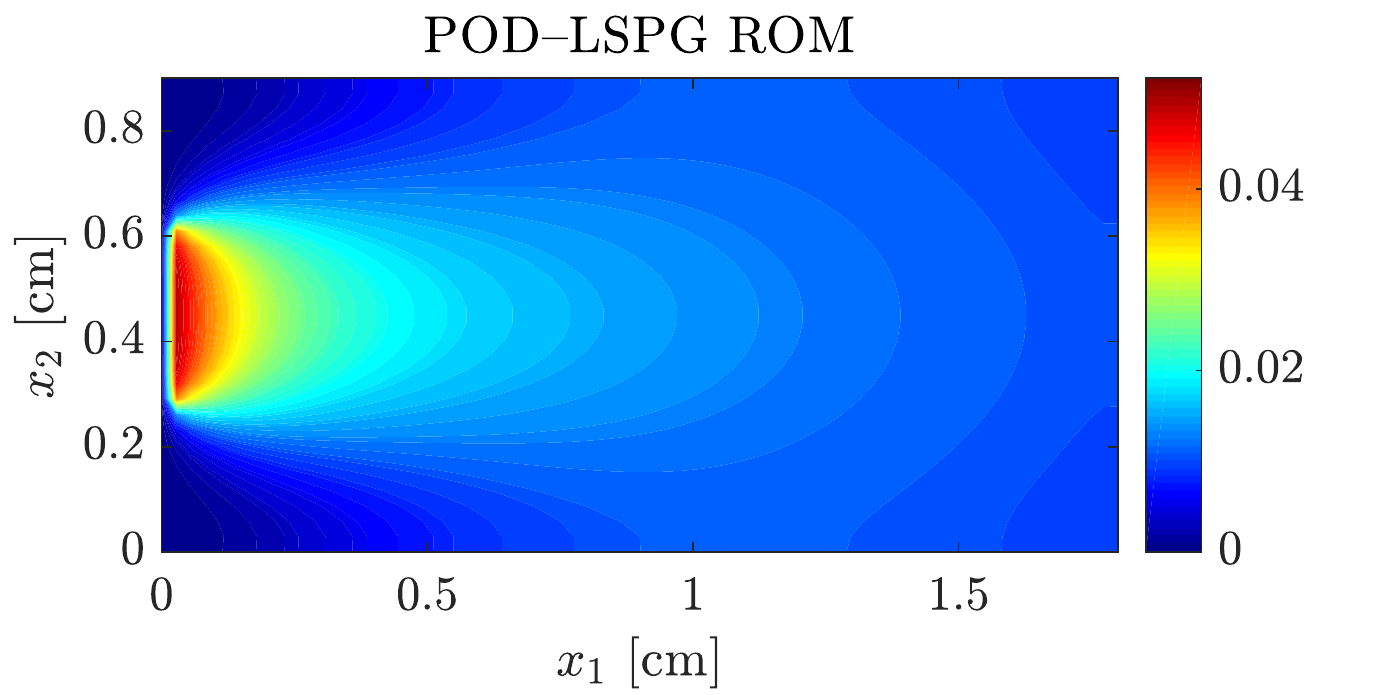}\vspace{3mm}\\
\includegraphics[width=0.475\linewidth]{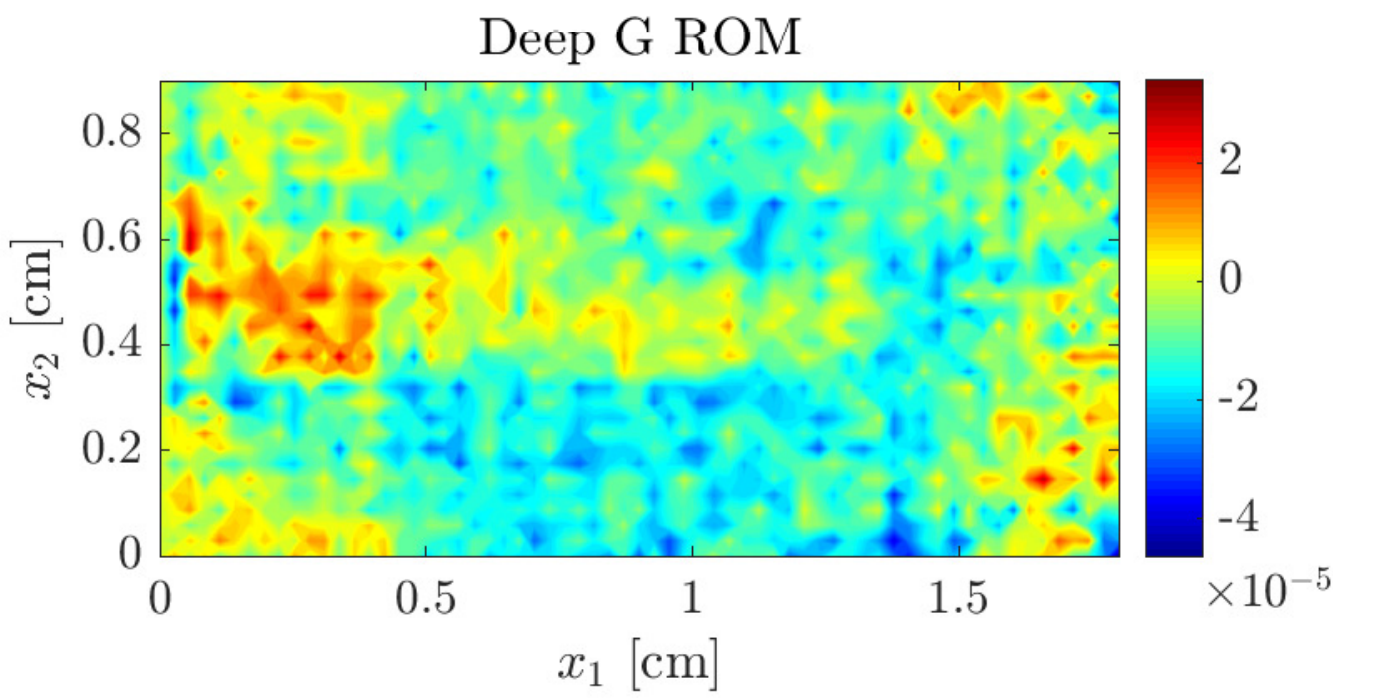}
\includegraphics[width=0.475\linewidth]{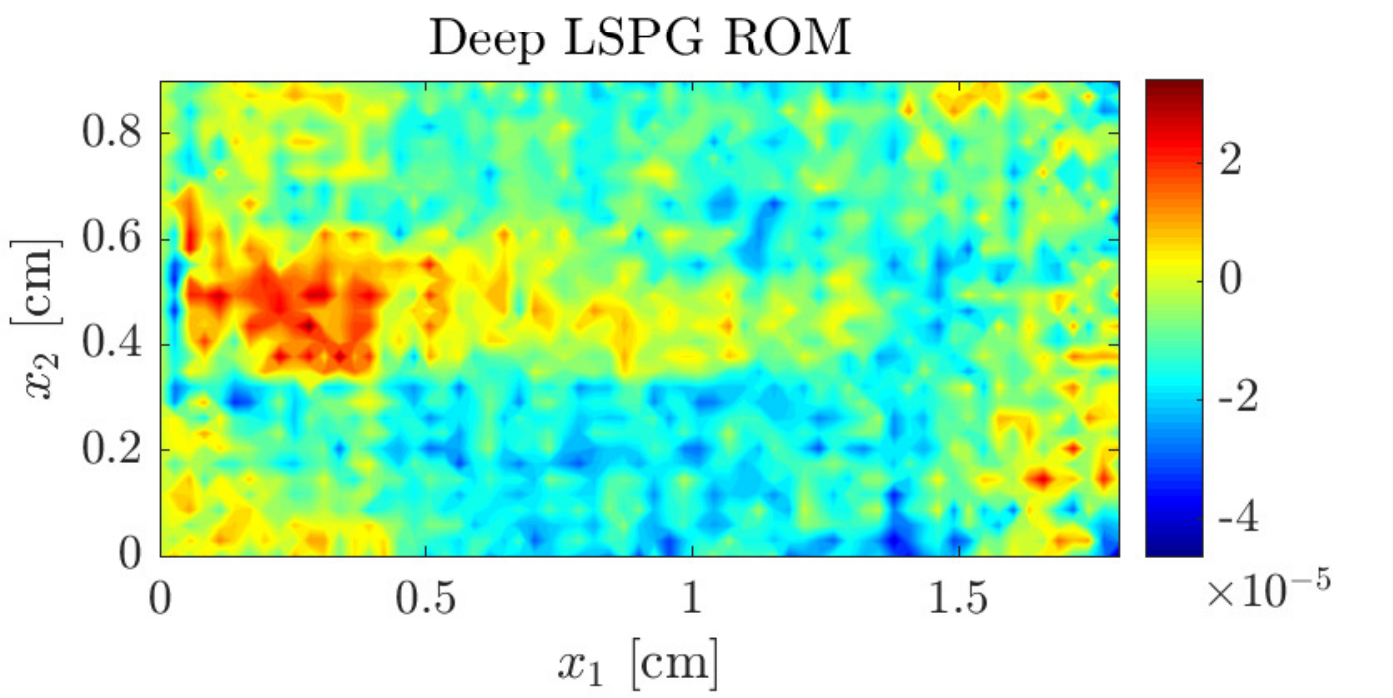}\\
\caption{\textit{Chemically reacting flow}. Relative errors of the mass
	fraction of $\fuel$ computed using ROMs at online-parameter instance
	$\paramtest^{1} =  (2.5\times10^{12},\, 5.85\times10^{3})$ at $t=0.06$. The
	dimensions of the generalized coordinates in both linear-subspace ROMs (top
	row) and nonlinear-manifold ROMs (bottom row) are the same $\nstatered=3$.
	The color maps on the each row are in the same scale.}
\label{fig:rom_H2_param1}
\end{figure}

\begin{figure}[!t]
\centering
    \begin{subfigure}[b]{0.45 \linewidth}
  \centering
    \includegraphics[width=\linewidth]{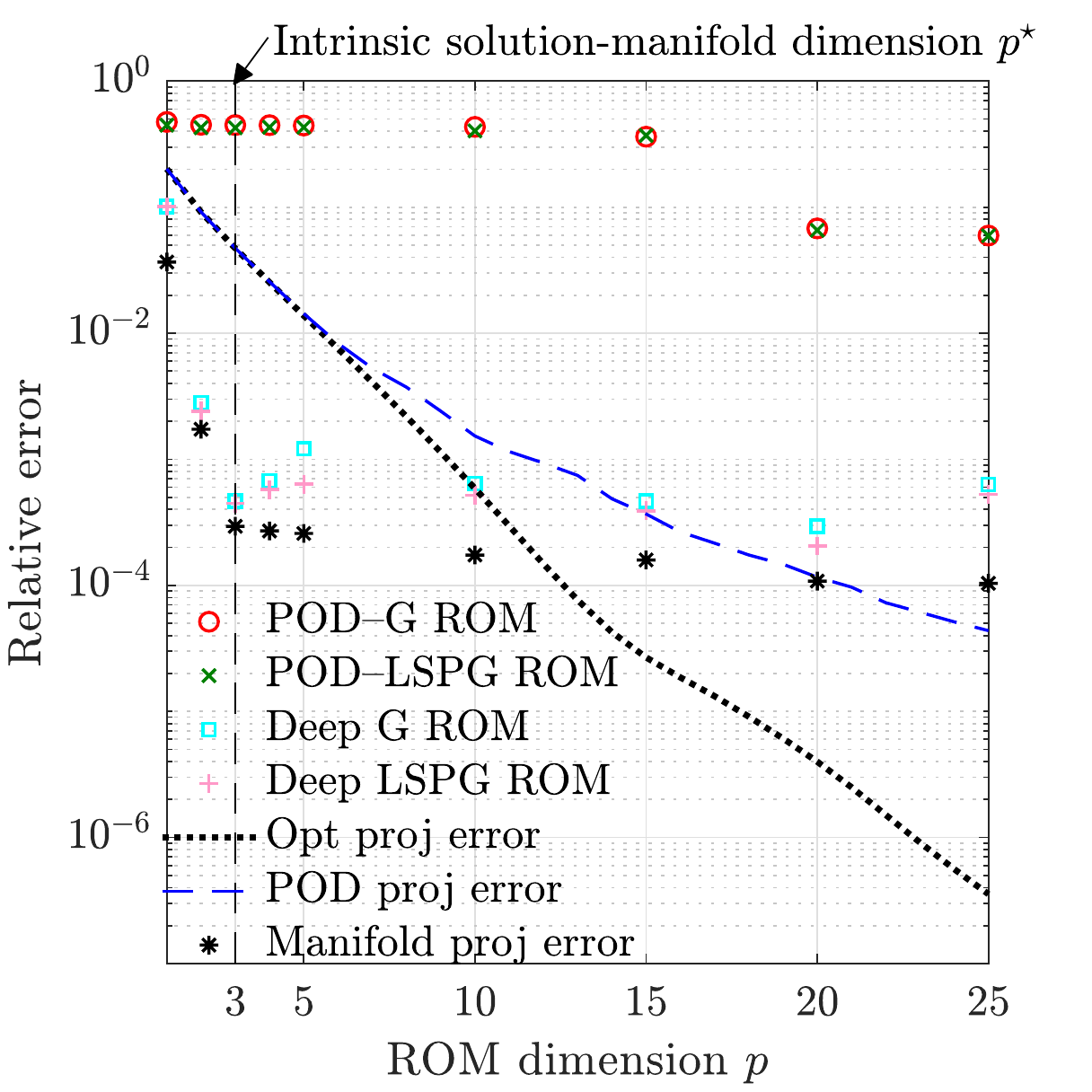}
			\caption{Online-parameter instance 1,\\ $\paramtest^{1} = \left(2.5\times10^{12}, 5.85\times10^3\right)$}
  \end{subfigure}
      \begin{subfigure}[b]{0.45 \linewidth}
  \centering
    \includegraphics[width=\linewidth]{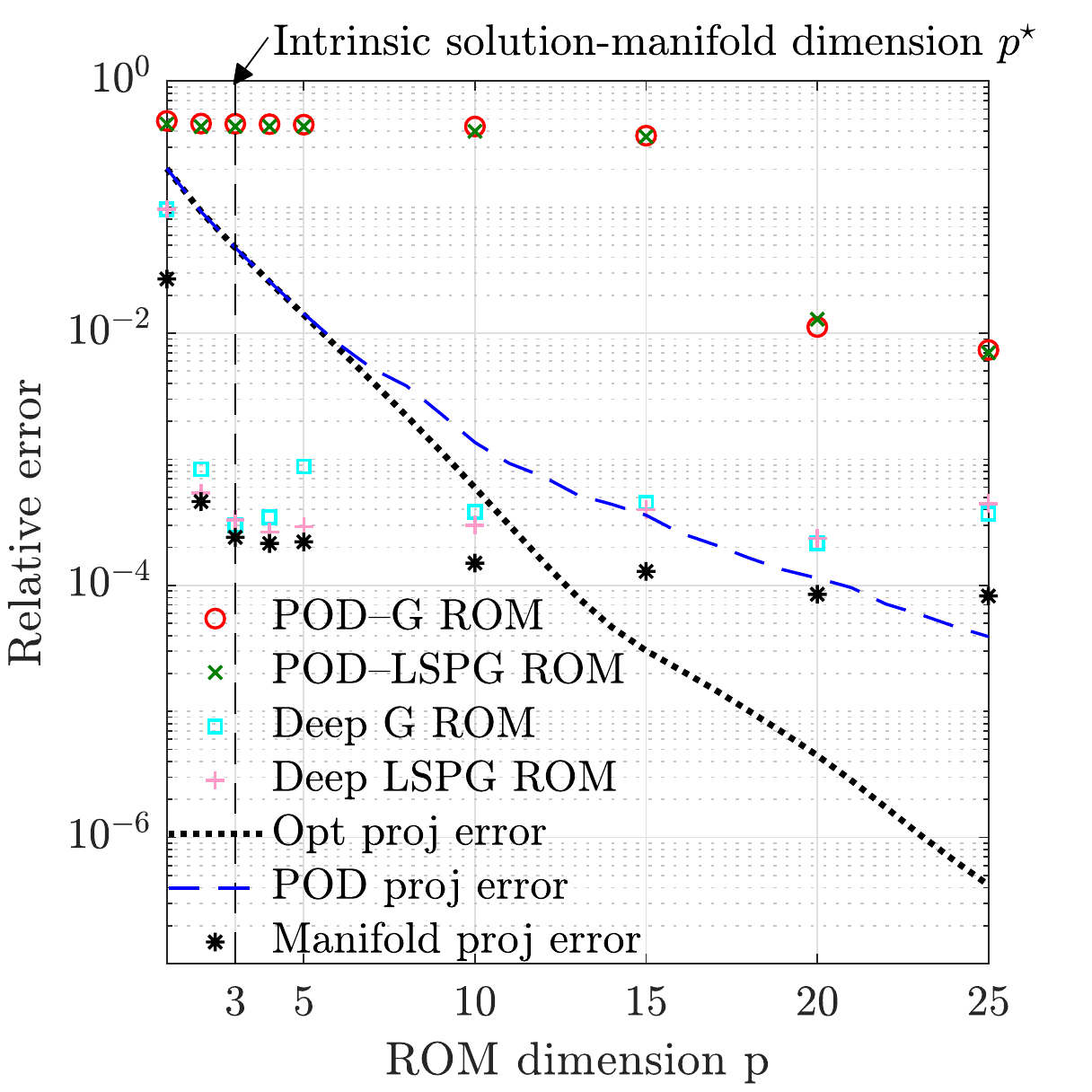}
			\caption{\RT{Online-parameter instance 2,\\ $\paramtest^{2} = \left(3.2\times 10^{12}, 7.25\times 10^3\right)$}}
  \end{subfigure}
\caption{\textit{Chemically reacting flow.} Relative errors of the ROM
	solutions (Eq.\ \eqref{eq:rel_err})  at online-parameter instances
	$\paramtest^{1} =  (2.5\times10^{12},\, 5.85\times10^{3})$ \RT{and $\paramtest^{2} = \left(3.2\times 10^{12}, 7.25\times 10^3\right)$} for varying
	ROM dimension. The figure shows the relative errors of POD--ROMs with
	Galerkin projection (POD--G ROM, red circle) and LSPG projection (POD--LSPG
	ROM, green cross), and the nonlinear-manifold ROMs equipped with the decoder
	with time-continuous optimality (Deep G ROM, cyan square) and time-discrete
	optimality (Deep LSPG ROM, magenta plus sign).  The figure also reports the
	projection error (Eq.~\eqref{eq:proj_err}) of the solution 
	onto the  POD basis
	(POD proj error, dashed blue line) and the 
	optimal basis (Opt proj error, \OWN{dotted} black line), and \OWN{reports the manifold projection error of the solution (Eq.~\eqref{eq:disc_opt_problem_optimal}, Manifold proj error, black asterisk).}
	The vertical dashed line
	indicates the intrinsic solution-manifold dimension $\intrinsicDim$ (see
	Remark \ref{rem:intrinsic}).
	}
\label{fig:err_adr}
\end{figure}

\OWN{We vary the number of training-parameter instances in the set
$\ntrain \in \{4,8,16,32,64\}$, and we use the resulting $\nsnap = \ntime
\ntrain$ snapshots to train both the autoencoder for the \GalerkinNameDeep\ and
\LSPGNameDeep\ ROMs and to compute the POD basis for the POD--Galerkin and POD--LSPG
ROMs. We fix the reduced dimension to $\dofrom = 5$. Figure
\ref{fig:adr_nparam} reports the relative error of the four ROMs for these
different amounts of training data.  This figure demonstrates that the
proposed \GalerkinNameDeep\ and \LSPGNameDeep\ ROMs yield accurate
results---less than 1\%
relative error for both \GalerkinNameDeep\ and \LSPGNameDeep---with only $\ntrain = 4$ parameter
instances. These results again show that the proposed methods do not appear to require
an excessive amount of training data relative to standard POD--based ROMs.}
\begin{figure}[!h]
\centering
    \begin{subfigure}[b]{0.45 \linewidth}
    \centering
    \includegraphics[width=\linewidth]{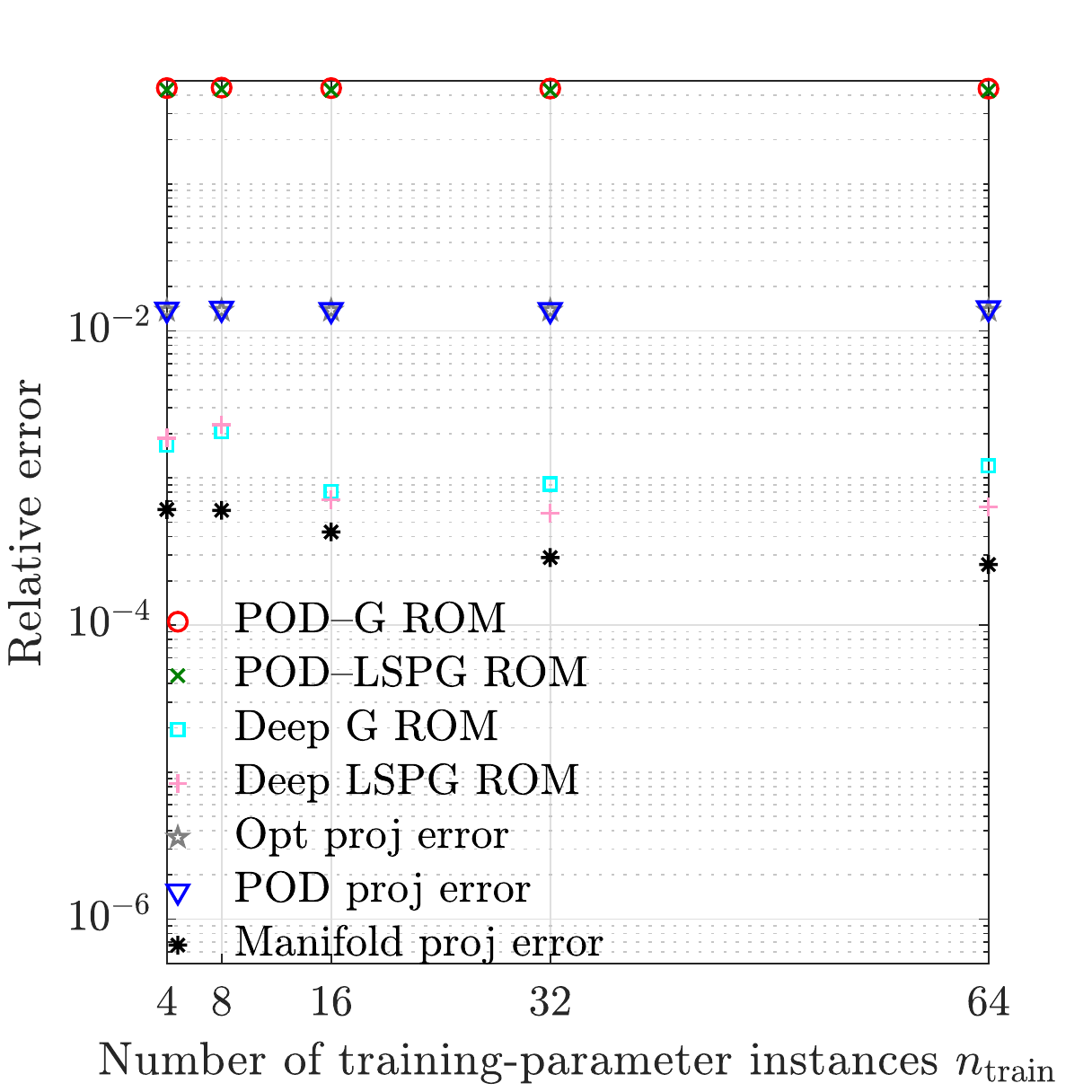}
    \caption{\OWN{Online-parameter instance 1,\\ $\paramtest^{1} = \left(2.5\times10^{12}, 5.85\times10^3\right)$ with $\dofrom=5$}}
    \end{subfigure}
    \begin{subfigure}[b]{0.45 \linewidth}
    \centering
    \includegraphics[width=\linewidth]{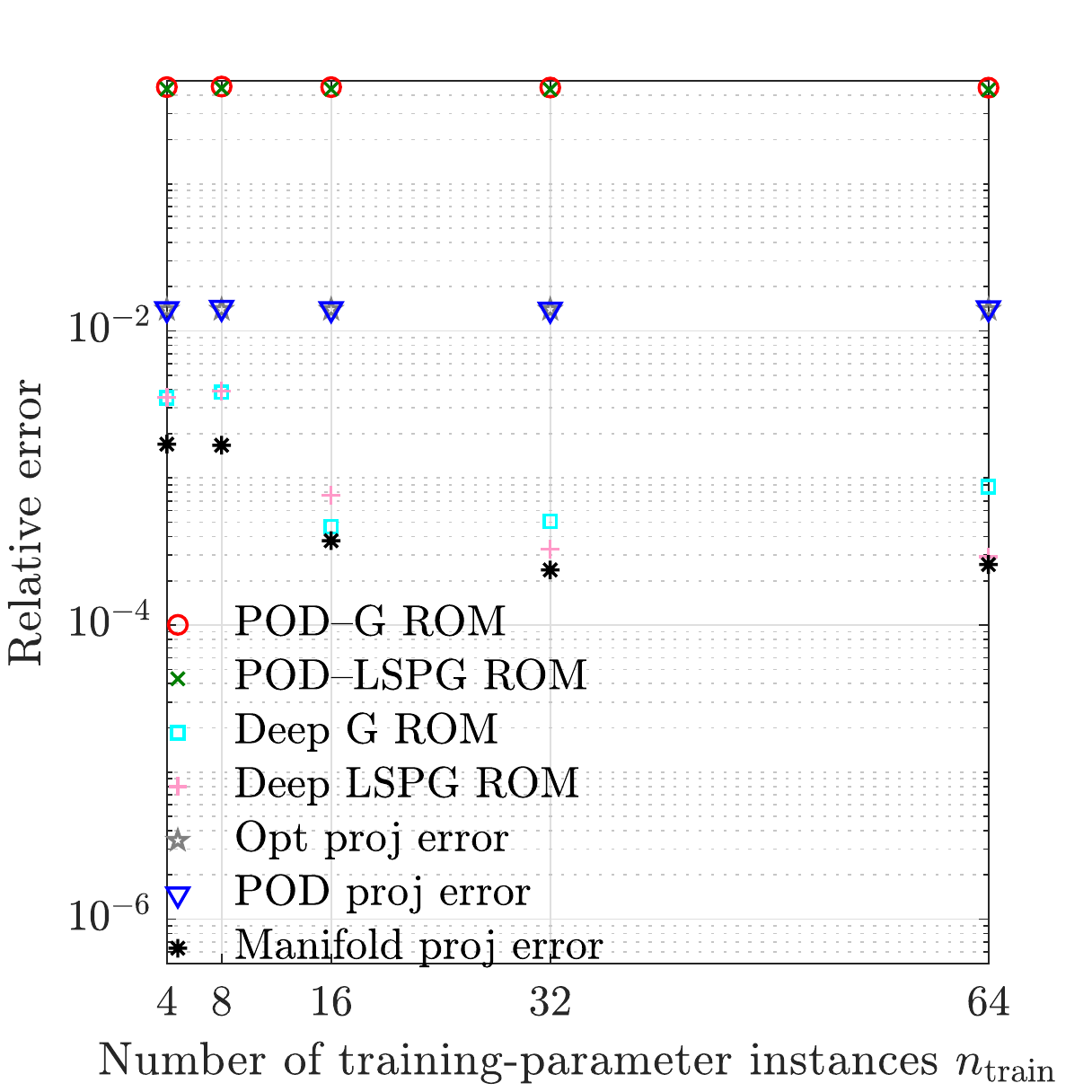}
    \caption{\OWN{Online-parameter instance 2,\\ $\paramtest^{2} = \left(3.2\times 10^{12}, 7.25\times 10^3\right)$ with $\dofrom=5$}}
    \end{subfigure}
	\caption{\OWN{\textit{Chemically reacting flow: varying amount of training data.} Relative errors of the ROM
	solutions (Eq.\ \eqref{eq:rel_err})  at online-parameter instances
	$\paramtest^{1} =  (2.5\times10^{12},\, 5.85\times10^{3})$ and $\paramtest^{2} = \left(3.2\times 10^{12}, 7.25\times 10^3\right)$ for a varying the number training-parameter instances	$\ntrain \in \{4,8,16,32,64\}$. The figure shows the relative errors of POD--ROMs with
	Galerkin projection (POD--G ROM, red circle) and LSPG projection (POD--LSPG
	ROM, green cross), and the nonlinear-manifold ROMs equipped with the decoder
	with time-continuous optimality (Deep G ROM, cyan square) and time-discrete
	optimality (Deep LSPG ROM, magenta plus sign).  The figure also reports the
	projection error (Eq.~\eqref{eq:proj_err}) of the solution 
	onto the  POD basis
	(POD proj error, blue triangle) and the 
	optimal basis (Opt proj error, gray pentagram), and reports the manifold projection error of the solution (Eq.~\eqref{eq:disc_opt_problem_optimal}, Manifold proj error, black asterisk).
	}}
\label{fig:adr_nparam}
\end{figure}

\section{Conclusion}\label{sec:conc}
This work has proposed novel \GalerkinName\  and \LSPGName\ projection
techniques, which project dynamical-system models onto arbitrary
continuously-differentiable nonlinear manifolds. We demonstrated how these methods
can exactly satisfy the initial condition, and provided quasi-Newton solvers
for implicit time integration.

We performed analyses that demonstrated that employing an affine trial
manifold recovers classical linear-subspace Galerkin and LSPG projection. We
also derived sufficient conditions for commutativity of time discretization
and \GalerkinName\ projection, as well as conditions under which
\GalerkinName\ and \LSPGName\ projection are equivalent. \OWN{In addition, we
derived \textit{a posteriori} time-discrete error bounds for the proposed
methods.}

We also proposed a practical strategy for computing a representative
low-dimensional nonlinear trial manifold that employs a specific convolutional
autoencoder tailored to spatially distributed dynamical-system states.  When
the \GalerkinNameCap\ and \LSPGNameCap\ ROMs employ this choice of decoder, we
refer to them as {\GalerkinNameDeepCap}\ and {\LSPGNameDeepCap}\
ROMs, respectively.

Finally, numerical experiments demonstrated the ability of the method to
produce substantially lower errors for low-dimensional reduced states than
even the projection onto the optimal basis. Indeed, the proposed methodology
is nearly able to achieve optimal performance for a nonlinear-manifold method;
in the first case, the reduced dimension required to achieve sub-0.1\% errors
is only two larger than the intrinsic solution-manifold dimension; in the
second case, the reduced dimension required to achieve such errors is exactly
equal to the intrinsic solution-manifold dimension.

\RT{We note that one drawback of the method in its current form (relative to
classical linear-subspace methods) is that it incurs a costlier training
process, as training a deep convolutional autoencoder is significantly more
computationally expensive than simply computing the left singular vectors of a
snapshot matrix. In addition, the proposed method is characterized by
significantly more hyperparameters---which relate to the autoencoder
architecture---than classical methods.
}

Future work involves integrating the proposed \GalerkinName\ and \LSPGName\
projection methods within a full hyper-reduction framework to realize
computational cost savings; this investigation will leverage sparse norms as
discussed in Remarks \ref{rem:normGal} and \ref{rem:normLSPG}, and will also
consider specific instances of the proposed autoencoder architecture that
enable computationally efficient hyper-reduction.  Additional future work
includes appending structure-preserving constraints to the minimum-residual
formulations \cite{carlberg2018conservative}, implementing the proposed
techniques in a production-level simulation code and demonstrating the methods
on truly large-scale dynamical-system models.

\section*{Acknowledgments}
The authors gratefully acknowledge Matthew Zahr for graciously providing the
\texttt{pyMORTestbed} code that was modified to obtain the numerical results,
as well as Jeremy Morton for useful discussions on the application of
convolutional neural networks to simulation data.  The authors also thank
Patrick Blonigan and Eric Parish for providing useful feedback.  This work was
sponsored by Sandia's Advanced Simulation and Computing (ASC) Verification and
Validation (V\&V) Project/Task \#103723/05.30.02.  
This paper describes objective technical results and analysis. Any subjective
views or opinions that might be expressed in the paper do not necessarily
represent the views of the U.S. Department of Energy or the United States
Government.
Sandia National
Laboratories is a multimission laboratory managed and operated by National
Technology \& Engineering Solutions of Sandia, LLC, a wholly owned subsidiary
of Honeywell International Inc., for the U.S. Department of Energy's National
Nuclear Security Administration under contract DE-NA0003525.

\appendix

\section{Stochastic gradient descent for autoencoder training}\label{app:SGD}

This section briefly summarizes stochastic gradient descent (SGD) with
minibatching and early stopping, which is used to  compute the parameters
$\nnparamOpt$ of the autoencoder.

We begin by randomly shuffling the snapshot matrix $\snapshotMat$ (defined in
Eq.~\eqref{eq:snapshotMat}) into neural-network training
snapshots $\snapshotTrain\in\RR{\ndof\times \ntrainNN}$
and validation snapshots $\snapshotVal\in\RR{\ndof\times \nvalNN}$, i.e.,
$$
[\snapshotTrain\ \snapshotVal] = \snapshotMat\randperm
$$
where $\randperm\in\{0,1\}^{\nsnap\times\nsnap}$ denotes a random
permutation matrix and
$\ntrainNN+\nvalNN = \nsnap$, typically with $\nvalNN \approx 0.1\ntrainNN$.

Given the training data $\snapshotTrain$, we compute the optimal parameters
$\nnparamOpt$ by (approximately) solving the optimization problem
\begin{align}\label{eq:cost_func}
	\underset{\nnparam}{\text{minimize}}\ \objfunc({\nnparam}) = \expect_{\gvec
	\sim \empdist} \lossfunc (\gvec, \nnparam) = \frac{1}{\ntrainNN}
	\sum_{i=1}^{\ntrainNN} \lossfunc (\snapshotTrainArg{i}, \nnparam),
\end{align}
which minimizes the empirical risk over the training data.  Here,  $\empdist$
denotes the empirical distribution associated with the training data
$\snapshotTrain$ and the loss function $\mathcal L$ provides a measure of
discrepancy between training snapshot $\snapshotTrainArg{i}$ and its
reconstruction $\autoenc(\snapshotTrainArg{i};\nnparam)$; for example, the
$\ell^2$-loss function is often employed such that
\begin{equation}\label{eq:mse}
\lossfunc:(\gvec,\nnparam)\mapsto
\|\gvec-\autoenc(\gvec;\nnparam)\|_2^2,
\end{equation}
in which case the loss function is equivalent to that employed for POD
(compare with Problem \eqref{eq:PODopt}).
Note that autoencoder training is categorized as an
\textit{unsupervised learning} (or semi-supervised learning) problem, as there
is no target response variable other than recovery of the original input data.

We apply SGD to (approximately) solve
optimization problem \eqref{eq:cost_func}, which leads to parameter updates at
the $i$th iteration of the form 
\begin{align}\label{eq:optUpdate}
	\nnparam^{(i+1)} = \nnparam^{(i)} - \learningrate 
	\gradientApproxArg{i}({\nnparam^{(i)}}).
\end{align} 
Here, $\learningrate\in\RR{}$ denotes the step length or \textit{learning rate} and
$\gradientApproxArg{i}(\approx\gradient)$ denotes a gradient approximation at
optimization iteration $i$. The gradient approximation corresponds to the sample mean of
the gradient over a \textit{minibatch}
$\minibatchArg{\iterToBatch{i}}\in\RR{\ndof\times\nminibatchArg{\iterToBatch{i}}}$, where
			$$[\minibatchArg{1}\ \cdots\ \minibatchArg{\nbatch}] =
			\snapshotTrain\randpermTrain,$$
			where			$\iterToBatchNo:i\mapsto((i-1)\mod\nbatch)+1$ provides the mapping from
			optimization iteration $i$ to batch index, 
			and
			$\randpermTrain\in\{0,1\}^{\ndof\times \ntrainNN}$ denotes a random
			permutation matrix. That is, the gradient approximation at optimization
			iteration $i$ is 
\begin{equation} \label{eq:SGD}
	\gradientApproxArg{i}({\nnparam^{(i)}}) =
	\frac{1}{\nminibatchArg{\iterToBatch{i}}}
	\sum_{j=1}^{\nminibatchArg{\iterToBatch{i}}}\nabla_{\nnparam}\lossfunc(
	\minibatchVecArgs{\iterToBatch{i}}{j},\nnparam).
\end{equation} 
For small batch sizes, this gradient approximation not only significantly
reduces the per-iteration cost of the optimization algorithm, it can also
improve generalization performance, and---for early iterations and for
minibatch size 1---leads to the same sublinear rate of convergence of the
\textit{expected risk} as the empirical risk \cite{bottou2018optimization}. 
In practice, each gradient contribution $\nabla_{\nnparam}\lossfunc(
\minibatchVecArgs{\iterToBatch{i}}{j},\nnparam)$ is computed from the chain
rule via automatic differentiation, which is referred to in deep learning as
\textit{backpropagation} \cite{rumelhart1986learning}.

Some optimization methods employ adaptive learning rates that are tailored for
each parameter, which comprises a modification of the parameter update
\eqref{eq:optUpdate} to
\begin{align}\label{eq:optUpdateMod}
	\nnparam^{(i+1)} = \nnparam^{(i)} - \learningrateVecArg{i}\odot
	\gradientApproxArg{i}({\nnparam^{(i)}})
\end{align} 
where $\learningrateVecArg{i}$ denotes a vector of learning rates and $\odot$ denotes the (element-wise) Hadamard product. Examples include
AdaGrad \cite{duchi2011adaptive}, RMSProp \cite{Tieleman2012}, and Adam. 

Although we have presented the specific case of non-adaptive minibatches
(which we employ in the numerical experiments), it is also possible to employ
adaptive batch sizes; often, the batch size increases with iteration count to
produce a lower-variance gradient estimate as a local minimum is approached
\cite{bottou2018optimization}\RO{.}

Rather than terminating iterations when a local minimum of the objective
function $\objfunc$ is reached, we instead employ \textit{early stopping},
which is a widely used form of regularization that has been shown to
improve generalization performance in many applications
\cite{goodfellow2016deep,bottou2018optimization}. Here, we
terminate iterations when the loss function on the validation snapshots
$$
\frac{1}{\nvalNN}
	\sum_{i=1}^{\nvalNN} \lossfunc (\snapshotValArg{i}, \nnparam)
$$
does not decrease for a certain number of \textit{epochs}, where an epoch is
equivalent to $\nbatch$ iterations, i.e., a single pass through the training
data.  Early stopping effectively treats the number of optimization iterations
as a hyperparameter, as allowing a large number of iterations can be
associated with a higher-capacity model.

Algorithm \ref{alg:ae_sgd} describes autoencoder training using SGD with
minibatching and early stopping.  Note that the adaptive learning rate
strategy, initial parameters $\nnparam^{(0)}$, number of minibatches
$\nbatch$, maximum number of epochs $\nepoch$, and early-stopping criterion
are all SGD hyperparameters that comprise inputs to the algorithm.  Line
\ref{ln:lr_update} of Algorithm \ref{alg:ae_sgd} applies the adaptive learning
rate strategy. For example, AdaGrad scales the learning rate to be inversely
proportional to the square root of the sum of all the historical squared
values of the gradient \cite{duchi2011adaptive}; Adam updates the learning
rate based on estimates of the first moment (the mean) and the second moment
(the uncentered variance) of the gradients \cite{kingma2014adam}.
Alternatively, the learning rate can be kept to a single constant for all
parameters over the entire training procedure, which simplifies the parameter
update procedure in Line \eqref{ln:param_update} of Algorithm \ref{alg:ae_sgd}
to the typical SGD update \eqref{eq:optUpdate}.  See
Ref.~\cite{bottou2018optimization} for a review of training deep neural
networks.

\begin{algorithm}[H]
    \caption{Stochastic gradient descent with minibatching and early stopping}
    \label{alg:ae_sgd}
    \algorithmicrequire{
			Snapshot matrix $\snapshotMat$;
			autoencoder 
		$\autoenc(\gvec; \nnparam) = \decoder(\cdot;\decparam)\circ
		\encoder(\gvec; \encparam)$; SGD hyperparameters (fraction of snapshots to employ for
			validation $\nvalidationNNfrac\in [0,1]$;
		adaptive learning rate strategy; initial parameters $\nnparam^{(0)}$; 
		 number of minibatches $\nbatch$; maximum number of epochs $\nepoch$;
		 early-stopping criterion)}\\
    \algorithmicensure{Encoder
		$\encoder(\cdot) = \encoder(\cdot;\encparam^\star)$; decoder $\rdbasisnl(\cdot) =
		\decoder(\cdot;\decparam^\star)$; }
    \begin{algorithmic}[1] 
        \State Randomly shuffle the snapshot matrix into neural-network
				training and validation snapshots 
				$
[\snapshotTrain\ \snapshotVal] = \snapshotMat\randperm
				$ with $\snapshotVal\in\RR{\ndof\times\nvalNN}$ with $\nvalNN =
				\nvalidationNNfrac \nsnap$.
        \State Randomly shuffle the training snapshots into minibatches
				$
[\minibatchArg{1}\ \cdots\ \minibatchArg{\nbatch}] =
			\snapshotTrain\randpermTrain
				$.
				\State Initialize optimization iterations $i\leftarrow 0$
        \For{$j = 1, \ldots, \nepoch$}
          \For{$k = 1,\ldots,\nbatch$}
              \State Update learning rate $\learningrateVecArg{i}$ based on adaptive learning rate strategy \label{ln:lr_update}
              \State Perform parameter update $\nnparam^{(i+1)} = \nnparam^{(i)} - \learningrateVecArg{i}\odot\gradientApproxArg{i}({\nnparam^{(i)}})$ \label{ln:param_update}
              \State $i\leftarrow i+1$
          \EndFor
          \State Terminate if early-stopping criterion is satisfied on the loss 
					$\frac{1}{\nvalNN} \sum_{i=1}^{\nvalNN} \lossfunc (\snapshotValArg{i}, \nnparam^{(i)})
					$
        \EndFor
				\State Set $\nnparamOpt\leftarrow\nnparam^{(i)}$
    \end{algorithmic}
\end{algorithm}


\newcommand*{\htensor}{{\mathcal H}}
\newcommand*{\wtensor}{{\mathcal W}}
\newcommand*{\btensor}{{\mathcal B}}
\newcommand*{\nfilter}{n_\text{filter}}
\newcommand*{\kerlen}{k}
\newcommand*{\stride}{s}

\section{Basics of convolutional layers}\label{app:conv}
This section provides notations and basic operations performed in convolutional layers. See \cite{dumoulin2016guide} for more details of the convolution arithmetic for deep learning.

Units in convolutional layers are organized as \textit{feature maps} $\htensor$, where each unit is connected to the local patches of the feature maps of the previous layer through a discrete convolution defined by a set of \textit{kernels}, which is denoted by \textit{filter bank} $\wtensor$, followed by a nonlinear activation and a pooling opertion. The feature map at layer $l$ can be considered as a 3-dimensional tensor $\htensor^{l} \in \RR{\nchannel^l\times\nspaceDim{1}^l\times\nspaceDim{2}^l}$ with element $\htensor^{l}_{i,j,k}$ representing a unit within channel $i$ at row $j$ and column $k$, and the filter banks at layer $l$ can be considered as a 4-dimensional tensor $\wtensor^{l} \in \RR{\nfilter^l \times\nchannel^{l-1} \times \kerlen_1^l \times \kerlen_2^l}$ with element $\wtensor^{l}_{i,j,m,n}$ connecting between a unit in channel $i$ of the output and a unit in channel $j$ of the input, with an offset of $m$ rows and $n$ columns between the output unit and the input unit. The number of filters in the filter bank is denoted by $\nfilter$, and the kernel length is characterized by $\kerlen_1$ and $\kerlen_2$. Convolving a feature map $\htensor^{l-1}$ with a filter bank $\wtensor^{l}$ can be written as
\begin{align*}
\htensor^{l}_{i,j,k} = \activation_{l} \left(\sum_{r,m,n} \htensor^{l-1}_{r, (j-1)\times s+m,(k-1) \times s+n   } \wtensor^{l}_{i,r,m,n} + \btensor^{l}_{i,j,k}\right),
\end{align*}
for all valid $i$, where $\mathcal B^{l}$ is a tensor indicating bias. Here, $s$ denote the \textit{stride}, which determines downsampling rate of each convolution; only every $s$ elements is sampled in each direction in the output. By having $s > 1$, the dimension of the next feature map can be reduced by factor of $s$ in each direction. The filter banks $\wtensor$ and the biases $\btensor$ are learnable parameters, whereas the kernel length [$\kerlen_1$, $\kerlen_2$] and the number of filters $\nfilter$, and the stride  $s$ are the hyperparameters. After the nonlinearity, a pooling function is typically applied to the output, extracting a statistical summary of the neighboring units of the output at certain locations. 

\RT{
\section{Additional numerical experiments: extrapolation and interpolation in
time}
We now perform additional investigations that assess the ability of the
proposed \GalerkinNameDeep\ and \LSPGNameDeep\ ROMs to both extrapolate
(\ref{app:timeExtrap}) and interpolate (\ref{sec:timeInterp})
in time. We also assess the effect of early stopping on the performance of these methods
(\ref{app:earlyStop}). All numerical experiments in this section are performed on
the 1D Burgers' equation described in Section \ref{sec:burg} with the same setup
except when otherwise specified.
\subsection{Time extrapolation}\label{app:timeExtrap}
To assess time extrapolation, we collect snapshots associated with the first
$\nsubsnap(\leq\ntime=500)$ time instances for each training-parameter instance and use the
resulting $\nsnap = \nsubsnap \ntrain$ snapshots to train the autoencoder for
the \GalerkinNameDeep\ and \LSPGNameDeep\ ROMs and to compute the POD basis for the
POD--Galerkin and POD--LSPG ROMs. Figure \ref{fig:burger_extrapolation}
reports the relative error for each of these ROMs of dimension $\dofrom =
5$  
at the online-parameter
instances $\paramtest^1$ and $\paramtest^2$ for the number of collected
snapshots varying in the set $\nsubsnap \in \{200,300,400,500\}$.
These results show that the proposed \GalerkinNameDeep\ and \LSPGNameDeep\
ROMs yield more accurate results than the POD--Galerkin and POD--LSPG ROMs for
$\nsubsnap\geq 300$, with \LSPGNameDeep\ yielding relative errors around 0.1\%
for $\nsubsnap\geq 400$. Thus, we conclude that---for this example---the
proposed \GalerkinNameDeep\ and \LSPGNameDeep\ generally yield superior
time-extrapolation results than the POD--Galerkin and POD--LSPG ROMs.
\begin{figure}[!h]
\centering
    \begin{subfigure}[b]{0.45 \linewidth}
    \centering
    \includegraphics[width=\linewidth]{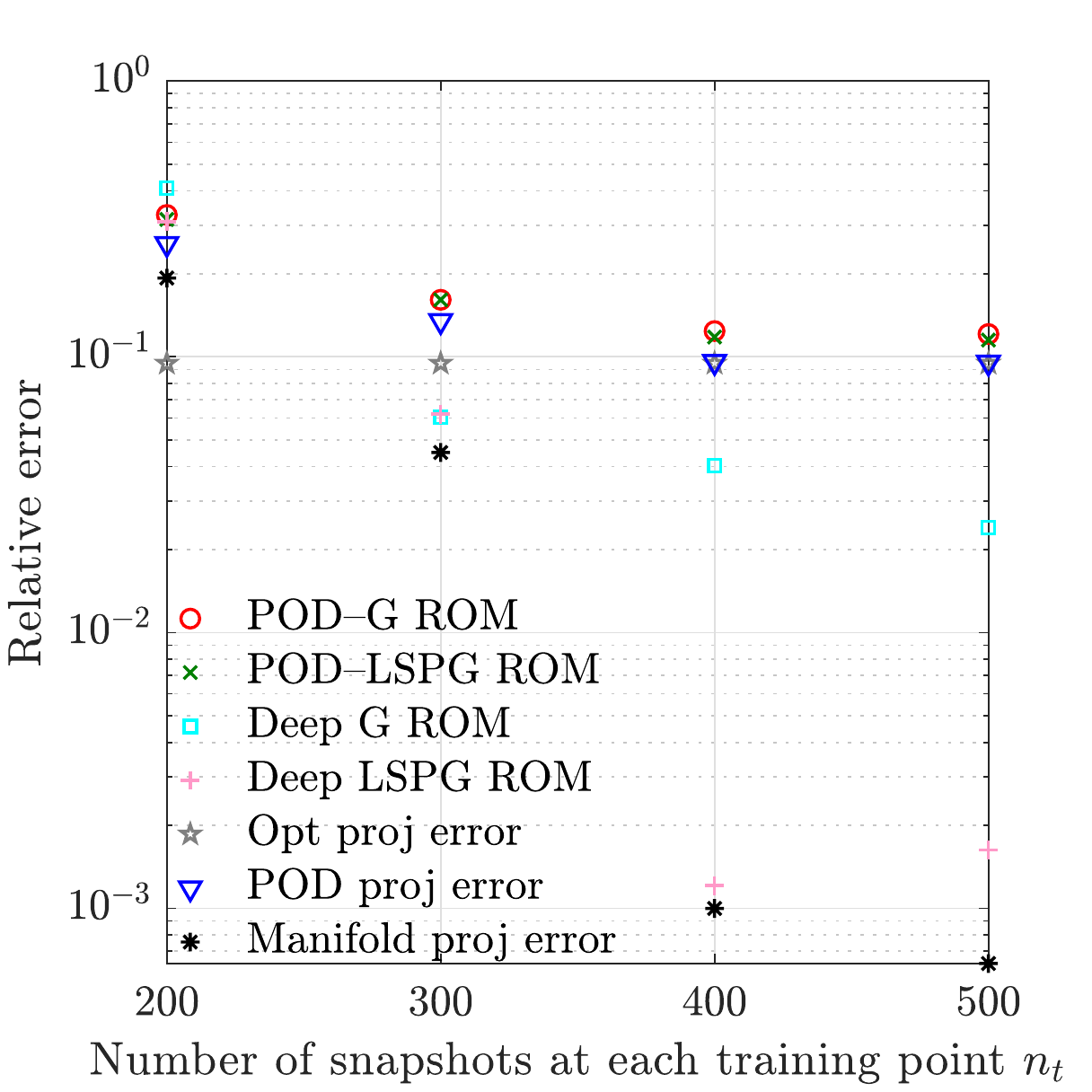}
    \caption{\RT{Online-parameter instance $\paramtest^{1} = \left(4.3, 0.021\right)$ with $\dofrom=5$}}
    \end{subfigure}
    \begin{subfigure}[b]{0.45 \linewidth}
    \centering
    \includegraphics[width=\linewidth]{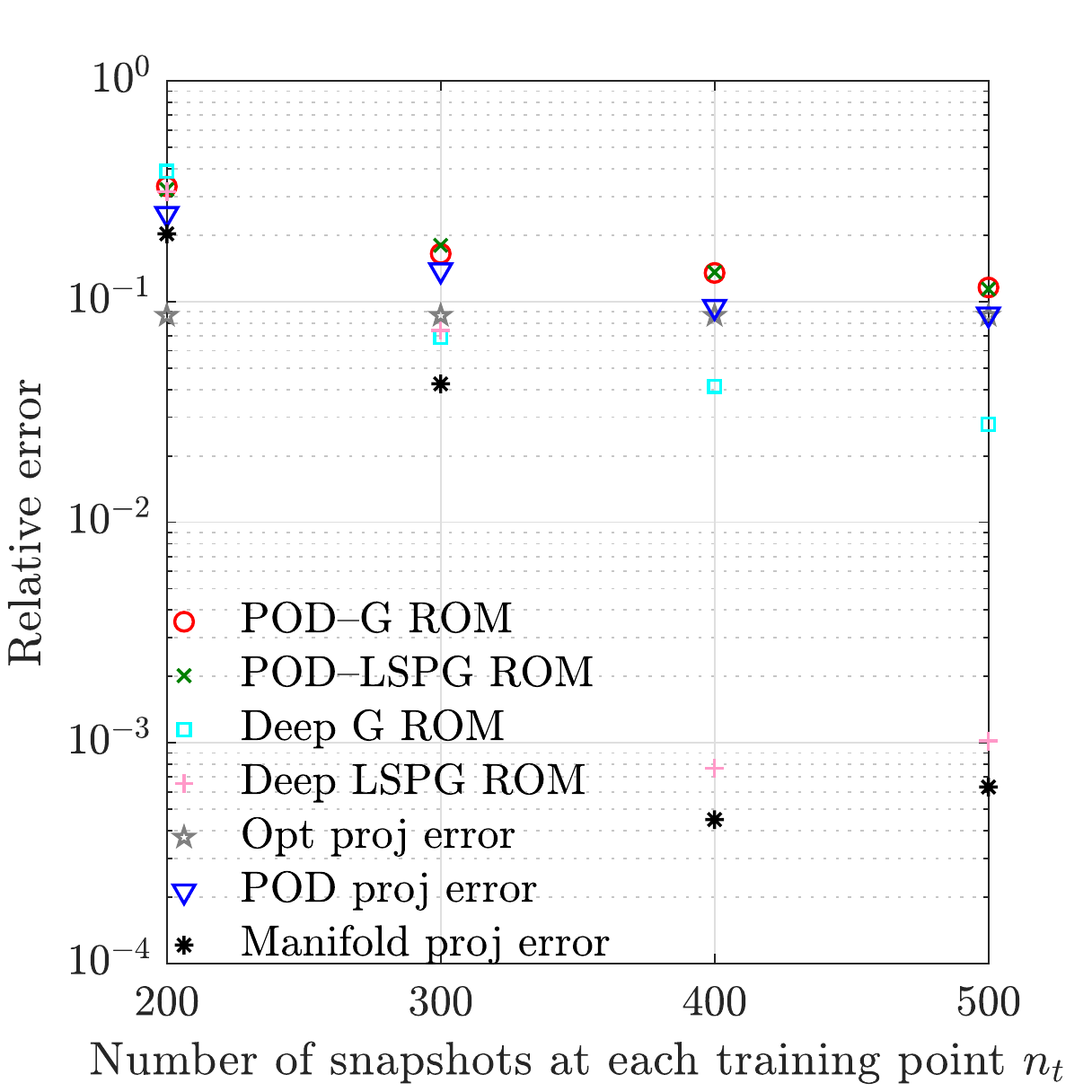}
    \caption{\RT{Online-parameter instance $\paramtest^{2} = \left(5.15, 0.0285\right)$ with $\dofrom=5$}}
    \end{subfigure}
\caption{\RT{\textit{1D Burgers' equation: time extrapolation.} Relative errors of
	the ROM solutions (Eq.\ \eqref{eq:rel_err}) at online-parameter instances 
	$\paramtest^{1}$ and $\paramtest^{2}$ for a varying number of snapshots
	$\nsubsnap \in \{200,300,400,500\}$ collected at each
	training-parameter instance. In this
	numerical experiment, these snapshots correspond to the state collected at
	the first $\nsubsnap$ time instances at each training-parameters instance.
	The figure shows the relative errors of POD--ROMs with Galerkin projection
	(POD--G ROM, red circle) and LSPG projection (POD--LSPG ROM, green cross),
	and the nonlinear-manifold ROMs equipped with the decoder with
	time-continuous optimality (Deep G ROM, cyan square) and time-discrete
	optimality (Deep LSPG ROM, magenta plus sign). The figure also reports the
	projection error (Eq.~\eqref{eq:proj_err}) of the solution 
	onto the  POD basis
	(POD proj error, blue triangle) and the 
	optimal basis (Opt proj error, gray pentagram), and reports the manifold projection error of the solution (Eq.~\eqref{eq:disc_opt_problem_optimal}, Manifold proj error, black asterisk).}}
\label{fig:burger_extrapolation}
\end{figure}
\subsection{Time interpolation}\label{sec:timeInterp}
To assess time interpolation, we now consider collecting $\nsubsnap$ snapshots
that are equispaced in the time interval at each training-parameter instance,
with the final snapshot collected at the final time instance $t^\ntime$.
As in the case of time extrapolation, we then use the resulting $\nsnap =
\nsubsnap \ntrain$ snapshots to train both the autoencoder for the
\GalerkinNameDeep\ and \LSPGNameDeep\ ROMs and to compute the POD basis for the
POD--Galerkin and POD--LSPG ROMs. 
Figure \ref{fig:burger_interpolation}
reports the relative error for each of these ROMs 
of dimension $\dofrom =
5$  
at the online-parameter
instances $\paramtest^1$ and $\paramtest^2$ for the number of collected
snapshots varying in the set $\nsubsnap \in \{10,20,50,100,140,250,500\}$.
First, these results show that 
the
POD--Galerkin and POD--LSPG ROMs are almost entirely insensitive to the number
of snapshots collected within the time interval for $\nsubsnap\geq 10$. Similarly, the
proposed \GalerkinNameDeep\ and \LSPGNameDeep\ ROMs are insensitive to the
number of snapshots for $\nsubsnap\geq 100$. For $\nsubsnap\in\{10,20,50\}$,
the \GalerkinNameDeep\ and \LSPGNameDeep\ ROMs yield larger errors than for
$\nsubsnap\geq 100$; however,
these errors are still smaller than the errors produced by  the POD--Galerkin
and POD--LSPG ROMs. Thus, we conclude that---for this example---the proposed
methods do not exhibit accuracy degradation when only 20\% of the snapshots
(collected equally spaced in time) are used to train the autoencoder.
\begin{figure}[!h]
\centering
    \begin{subfigure}[b]{0.45 \linewidth}
    \centering
    \includegraphics[width=\linewidth]{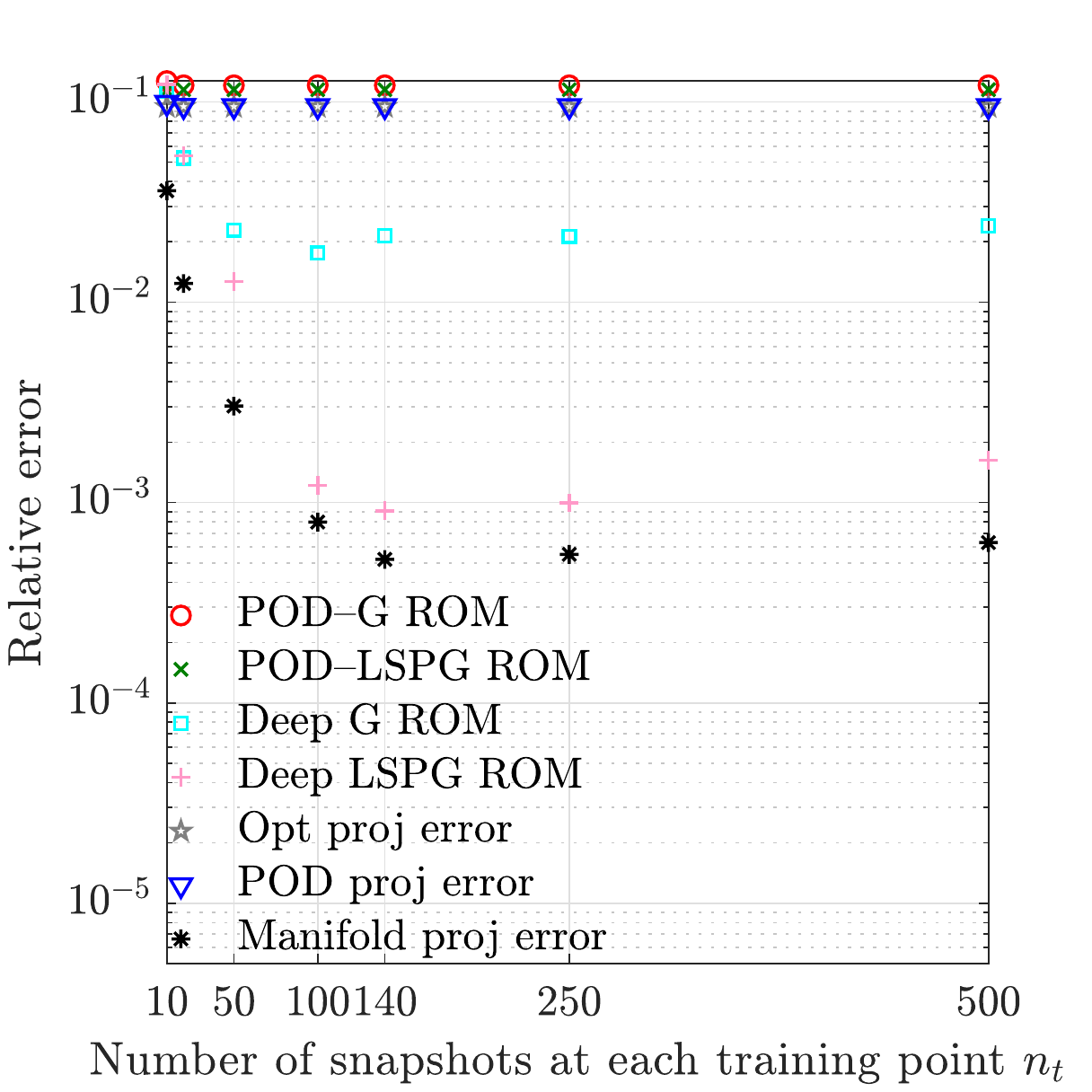}
    \caption{\RT{Online-parameter instance $\paramtest^{1} = \left(4.3, 0.021\right)$ with $\dofrom=5$}}
    \end{subfigure}
    \begin{subfigure}[b]{0.45 \linewidth}
    \centering
    \includegraphics[width=\linewidth]{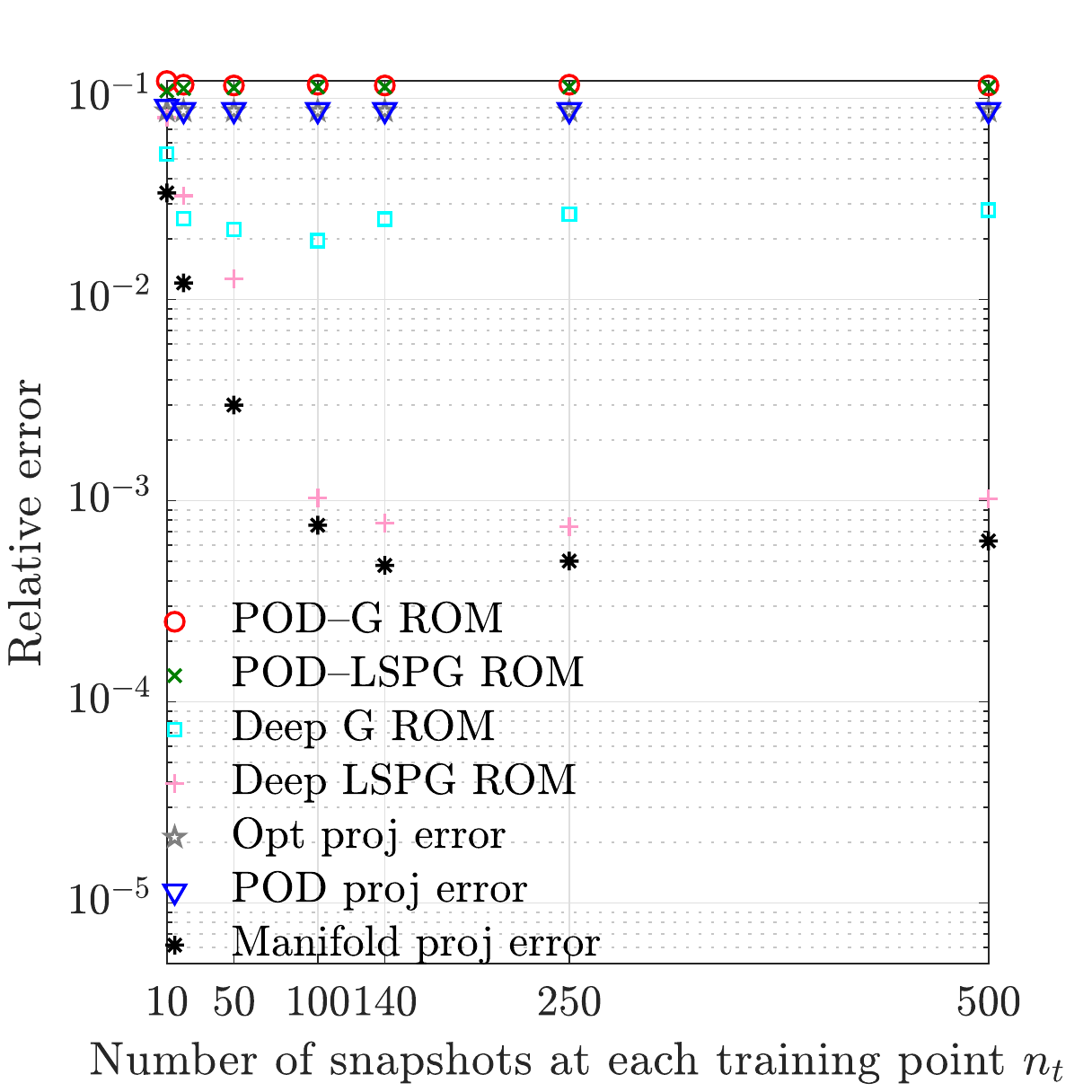}
    \caption{\RT{Online-parameter instance $\paramtest^{2} = \left(5.15, 0.0285\right)$ with $\dofrom=5$}}
    \end{subfigure}
\caption{\RT{\textit{1D Burgers' equation: time interpolation.} 
	Relative errors of
	the ROM solutions (Eq.\ \eqref{eq:rel_err}) at online-parameter instances 
	$\paramtest^{1}$ and $\paramtest^{2}$ for a varying number of snapshots
	$\nsubsnap \in \{10,20,50,100,140,250,500\}$ collected at each
	training-parameter instance. In this
	numerical experiment, these snapshots correspond to the state collected at
	$\nsubsnap$ equally-spaced time instances at each training-parameters
	instance, with the final snapshot being collected at the final time
	$t^\ntime$.
	The figure shows the relative errors of POD--ROMs with Galerkin projection
	(POD--G ROM, red circle) and LSPG projection (POD--LSPG ROM, green cross),
	and the nonlinear-manifold ROMs equipped with the decoder with
	time-continuous optimality (Deep G ROM, cyan square) and time-discrete
	optimality (Deep LSPG ROM, magenta plus sign). The figure also reports the
	projection error (Eq.~\eqref{eq:proj_err}) of the solution 
	onto the  POD basis
	(POD proj error, blue triangle) and the 
	optimal basis (Opt proj error, gray pentagram), and reports the manifold projection error of the solution (Eq.~\eqref{eq:disc_opt_problem_optimal}, Manifold proj error, black asterisk).}}
\label{fig:burger_interpolation}
\end{figure}
}
\RO{
\subsection{Early-stopping study}\label{app:earlyStop}
We now assess the effect of early stopping---a common approach in training
deep neural networks for improving generalization performance---on the
performance of the proposed
\GalerkinNameDeep\ and \LSPGNameDeep\ methods. To do so, 
we set the reduced dimension to $\dofrom=30$ and 
train the autoencoder both with and without the early stopping
strategy. To train without early stopping, we set the maximum number of epochs
to
$\nepoch=1000$, save the network parameters at each epoch, and select the
parameters (out of all 
$\nepoch=1000$ candidates) that yield the smallest value of  the loss function
$\lossfunc$ (defined in Eq.\ \ref{eq:mse}) on the validation set.
Table \ref{tab:wo_earlystop} reports the relative errors obtained by the
proposed ROMs---as well as the optimal projection error onto the manifold---at
the online-parameter instances. In this case, the
early-stopping strategy terminated iterations after 506 epochs, while the
strategy outlined above selected the network parameters arising at the 944th
epoch. These results
show that adopting the `no early stopping' strategy can slightly improve
performance over the early-stopping stratgy. We hypothesize this to be the
case because---for this example---the training data are quite informative of the
prediction task; thus, attempting to prevent overfitting essentially amounts
to underfitting.
\begin{table}[!h]
	\caption{\RO{\textit{1D Burgers' equation.} Relative errors of the ROM solutions
	(Eq.\ \eqref{eq:rel_err}) at online-parameter instances 
	$\paramtest^{1}$ and
	$\paramtest^{2}$.
	In this experiment, the offline training is performed both with and without the early
	stopping strategy.}}
	\label{tab:wo_earlystop}
	\begin{center}
	\begin{tabular}{|c |c |c| c | c| }
		\hline
	 	& \multicolumn{2}{c|}{$\paramtest^{1} = \left(4.3, 0.021\right)$} &  \multicolumn{2}{c|}{$\paramtest^{2} = \left(5.15, 0.0285\right)$}\\
		\hline
		Early stopping &  with & without & with & without\\
		\hline
		\LSPGNameCap		& $7.12\times 10^{-4}$ & $6.78\times 10^{-4}$ &
		$8.05\times 10^{-4}$	& $4.26\times 10^{-4}$\\
		\hline
		\GalerkinNameCap   		& $2.62\times 10^{-2}$ & $1.15\times 10^{-2}$ &
		$2.77\times 10^{-2}$	& $1.20\times 10^{-2}$\\
		\hline
		Optimal projection error 	&$4.20\times 10^{-4}$ & $2.87\times 10^{-4}$ &
		$3.98\times 10^{-4}$	& $2.81\times 10^{-4}$\\
		\hline
	\end{tabular}
	\end{center}
\end{table}
}

\newcommand*{\htensorsup}[1]{{\mathcal H}^{#1}}
\RO{
\section{Computational costs}
Let us consider the $i$th layer of the autoencoder, 
\begin{equation*}
\enclayer_{i}:(\gvec; \encparaml{i}) \mapsto \encactivation_i ( \encaffinetrans_i (\encparaml{i}, \gvec)).
\end{equation*}
\underline{Fully-connected layers}:
If the $i$th layer is a fully-connected layer, then $\encparaml{i} \in
\mathbb{R}^{\dofenclayer{i} \times (\dofenclayer{i-1}+1)}$ is a real-valued
matrix and the operation $\enclayer_{i}$ comprises a matrix--vector product
$\encaffinetrans_i (\encparaml{i}, \gvec) =\encparaml{i}[1, \gvec\tran]\tran$,
followed by an element-wise nonlinear activation $\encactivation_i$. Thus,
evaluating 
$\encactivation_i ( \encaffinetrans_i (\encparaml{i}, \gvec))$ incurs
$2\dofenclayer{i}(\dofenclayer{i-1}+1) + \activationcost\dofenclayer{i}$
floating-point operations (flops), where $\activationcost$ denotes the number of
flops incurred by applying the nonlinear activation to a
scalar. \\\\
\noindent \underline{Convolutional layers}:
If the $i$th layer is a (transposed-)convolutional layer, the operation $\enclayer_{i}$
comprises discrete convolutions, followed by an element-wise nonlinear
activation $\encactivation_i$. The input and the output of the $i$th
convolutional layer are 3-way tensors $\htensorsup{i-1} \in
\RR{\nchannel^{i-1} \times \nspaceDim{1}^{i-1} \times \nspaceDim{2}^{i-1}}$,
and $\htensorsup{i} \in \RR{\nchannel^{i} \times \nspaceDim{1}^{i} \times
\nspaceDim{2}^{i}}$ (see \ref{app:conv}). Each element of the output tensor is
computed by applying convolutional filters to a certain local region at every
channel of the input tensor, where a single application of the discrete
convolution with filter size $\kerlen_1^{i}\times\kerlen_2^{i}$ incurs
$2\kerlen_1^{i}\kerlen_2^{i}\nchannel^{i-1}$ flops. That is, computing
$\dofenclayer{i} (= \nchannel^{i}  \nspaceDim{1}^{i}  \nspaceDim{2}^{i})$
elements with convolutional filters of size $\kerlen_1^{i}\times\kerlen_2^{i}$
requires $2 \dofenclayer{i} \kerlen_1^{i}\kerlen_2^{i}\nchannel^{i-1}$ flops.
Thus,
evaluating $\encactivation_i ( \encaffinetrans_i (\encparaml{i}, \gvec))$ incurs $2 \dofenclayer{i} \kerlen_1^{i}\kerlen_2^{i}\nchannel^{i-1} + \activationcost\dofenclayer{i}$ flops and, again, $\activationcost$ denotes the number of
flops incurred by applying the nonlinear activation to a scalar.\\\\
\noindent \underline{Restrictor, prolongator, scaling and inverse scaling operator}: In this study, the restrictor and the prolongator are restricted to only change the shapes of the snapshot matrix into/from a tensor representing spatially distributed data and, thus, they do not incur floating-point operations. The scaling and the inverse scaling operators are applied to each FOM solution snapshot and incur $2N$ flops. \\\\
Figure \ref{fig:cost} reports the estimated computational costs of the decoder
with the latent code of dimension $\nstatered=5$ for varying choices of the
hyperparameters. On the left, the figure shows the computational costs
required at each layer of the decoder for varying convolutional filter sizes $\kerlen =
\{25,16,9,5,3\}$ with the number of the input channels and the number of convolutional filters specified as $\{64,32,16,8,1\}$, where the first element is the number of the input channels and the last four elements are the number of convolutional filters, and with strides $\{4,4,4,2\}$. On the right, the figure shows the total computational costs
for varying convolutional filter sizes $\kerlen = \{25,16,9,5,3\}$ and 
different numbers of input channels and convolutional filters $\{\{64,32,16,8,1\},
\{32,16,8,4,1\}, \{16,8,4,2,1\}, \{8,4,2,1,1\}, \{4,2,1,1,1\}, \{2,1,1,1,1\}\}$ and with strides $\{4,4,4,2\}$. The gray plane is the
computational cost for the linear decoder (e.g., POD) with the reduced dimension
$\nstatered=85$, which produces approximated solutions with the accuracy
comparable to the accuracy of the manifold ROM solutions with the autoencoder
configured as described in Table \ref{tab:autoenc_arch} with the reduced dimension $\nstatered=5$. The figure also reports the computational 
cost for the linear decoder (e.g., POD) with the latent dimension $\nstatered=5$ (the blue plane).
}
\begin{figure}[!h]
\centering
    \begin{subfigure}[b]{0.45 \linewidth}
    \centering
    \includegraphics[width=\linewidth]{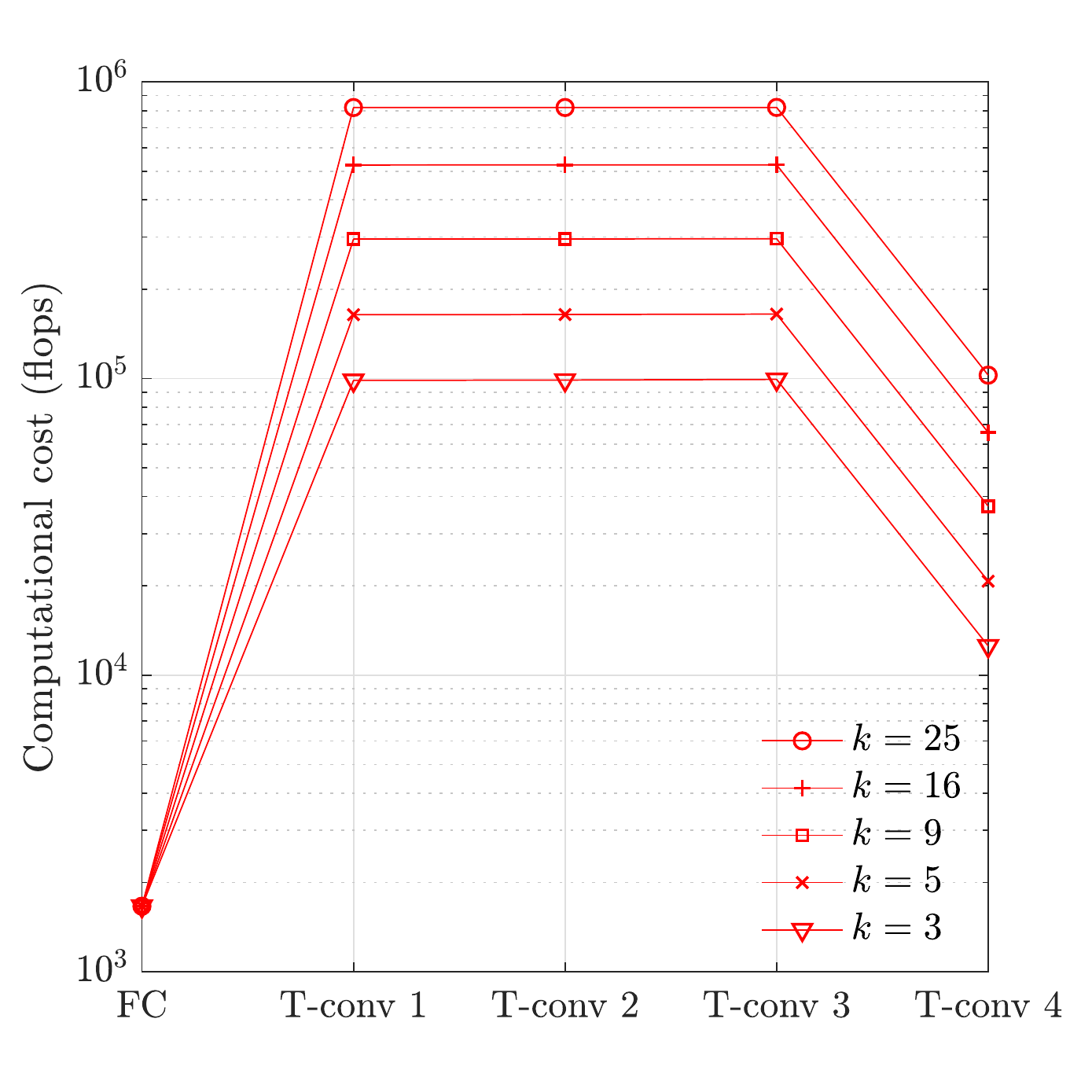}
    \caption{\RO{Estimated computational costs of the decoder at each layer}}
    \end{subfigure}
    \begin{subfigure}[b]{0.45 \linewidth}
    \centering
    \includegraphics[width=\linewidth]{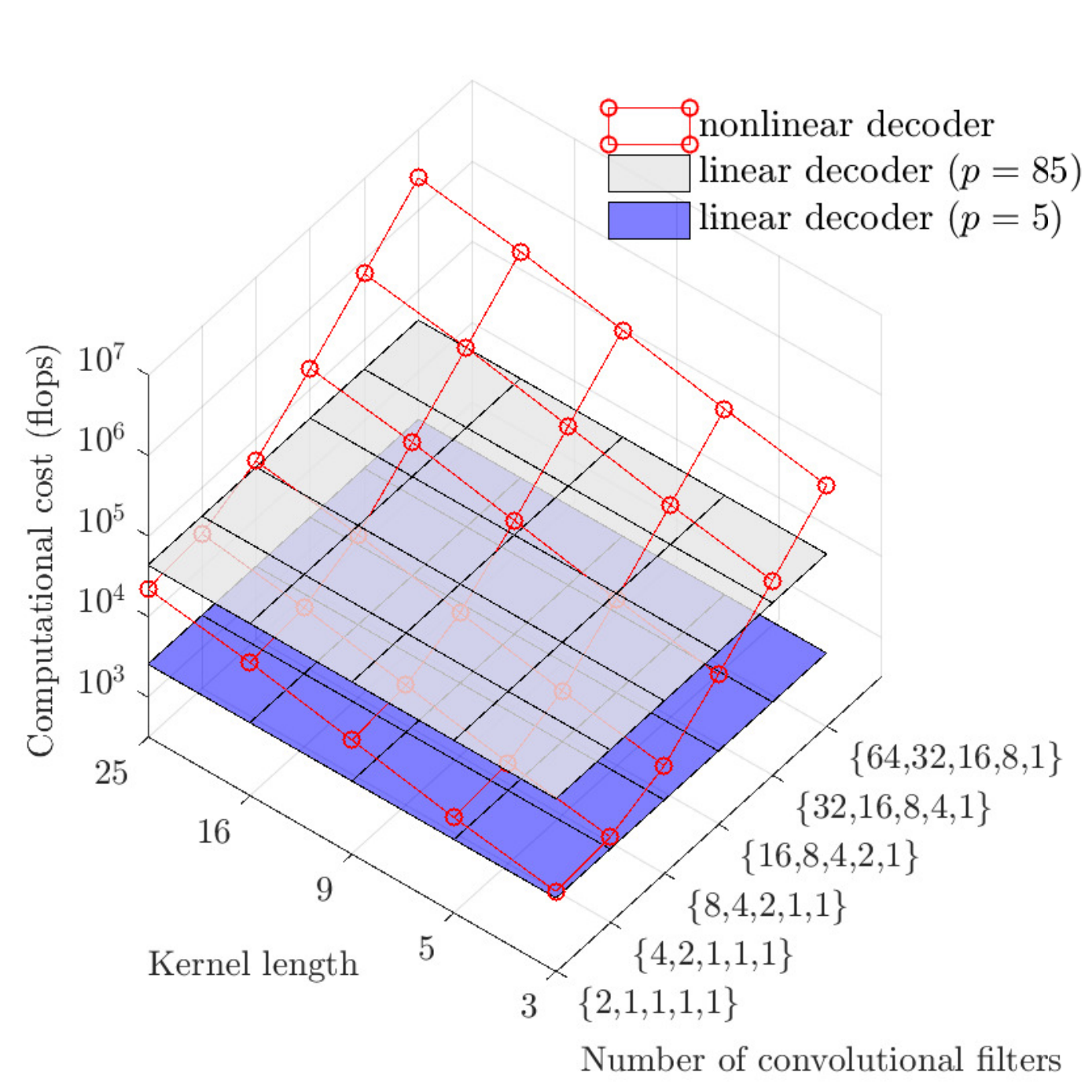}
    \caption{\RO{Estimated total computational costs of linear and nonlinear decoder}}
    \end{subfigure}
\caption{\RO{\textit{1D Burgers' equation: estimated computational costs of the
	decoder.} The computational costs
required at a fully-connected layer (FC) and transposed-convolutional layers (T-conv) of the decoder with the latent code of dimension $\nstatered=5$ for varying convolution filter sizes $\kerlen =
\{25,16,9,5,3\}$ with the number of convolutional filters $\{64,32,16,8,1\}$  (left) and the total computational costs
for varying convolutional filter sizes $\kerlen = \{25,16,9,5,3\}$ and
varying numbers of convolutional filters $\{\{64,32,16,8,1\},
\{32,16,8,4,1\}, \{16,8,4,2,1\}, \{8,4,2,1,1\}, \{4,2,1,1,1\}, \{2,1,1,1,1\}\}$ (right). The gray and blue planes are the
computational costs for the linear decoder (e.g., POD) with the reduced dimension
$\nstatered=85$ and $\nstatered=5$, respectively.}}
\label{fig:cost}
\end{figure}



\bibliography{ref}

\begin{thebibliography}{10}

\bibitem{tensorflow2015-whitepaper}
{\sc M.~Abadi, A.~Agarwal, P.~Barham, E.~Brevdo, Z.~Chen, C.~Citro, G.~S.
  Corrado, A.~Davis, J.~Dean, M.~Devin, S.~Ghemawat, I.~Goodfellow, A.~Harp,
  G.~Irving, M.~Isard, Y.~Jia, R.~Jozefowicz, L.~Kaiser, M.~Kudlur,
  J.~Levenberg, D.~Man\'{e}, R.~Monga, S.~Moore, D.~Murray, C.~Olah,
  M.~Schuster, J.~Shlens, B.~Steiner, I.~Sutskever, K.~Talwar, P.~Tucker,
  V.~Vanhoucke, V.~Vasudevan, F.~Vi\'{e}gas, O.~Vinyals, P.~Warden,
  M.~Wattenberg, M.~Wicke, Y.~Yu, and X.~Zheng}, {\em {TensorFlow}: Large-scale
  machine learning on heterogeneous systems}, 2015.
\newblock Software available from tensorflow.org.

\bibitem{amsallem2012nonlinear}
{\sc D.~Amsallem, M.~J. Zahr, and C.~Farhat}, {\em Nonlinear model order
  reduction based on local reduced-order bases}, International Journal for
  Numerical Methods in Engineering, 92 (2012), pp.~891--916.

\bibitem{bachmayr2017kolmogorov}
{\sc M.~Bachmayr and A.~Cohen}, {\em {Kolmogorov widths and low-rank
  approximations of parametric elliptic PDEs}}, Mathematics of Computation, 86
  (2017), pp.~701--724.

\bibitem{baur2011interpolatory}
{\sc U.~Baur, C.~Beattie, P.~Benner, and S.~Gugercin}, {\em Interpolatory
  projection methods for parameterized model reduction}, SIAM Journal on
  Scientific Computing, 33 (2011), pp.~2489--2518.

\bibitem{belkin2003laplacian}
{\sc M.~Belkin and P.~Niyogi}, {\em {Laplacian eigenmaps for dimensionality
  reduction and data representation}}, Neural Computation, 15 (2003),
  pp.~1373--1396.

\bibitem{binev2011convergence}
{\sc P.~Binev, A.~Cohen, W.~Dahmen, R.~DeVore, G.~Petrova, and P.~Wojtaszczyk},
  {\em Convergence rates for greedy algorithms in reduced basis methods}, SIAM
  Journal on Mathematical Analysis, 43 (2011), pp.~1457--1472.

\bibitem{bishop1997gtm}
{\sc C.~M. Bishop, M.~Svens{\'e}n, and C.~K. Williams}, {\em {GTM}: A
  principled alternative to the self-organizing map}, in Advances in Neural
  Information Processing Systems, 1997, pp.~354--360.

\bibitem{bottou2018optimization}
{\sc L.~Bottou, F.~E. Curtis, and J.~Nocedal}, {\em Optimization methods for
  large-scale machine learning}, SIAM Review, 60 (2018), pp.~223--311.

\bibitem{buffoni2010projection}
{\sc M.~Buffoni and K.~Willcox}, {\em Projection-based model reduction for
  reacting flows}, in 40th Fluid Dynamics Conference and Exhibit, AIAA Paper,
  2010, p.~5008.

\bibitem{cagniart2019model}
{\sc N.~Cagniart, Y.~Maday, and B.~Stamm}, {\em Model order reduction for
  problems with large convection effects}, in Contributions to Partial
  Differential Equations and Applications, Springer, 2019, pp.~131--150.

\bibitem{carlberg2015adaptive}
{\sc K.~Carlberg}, {\em Adaptive h-refinement for reduced-order models},
  International Journal for Numerical Methods in Engineering, 102 (2015),
  pp.~1192--1210.

\bibitem{carlbergGalDiscOpt}
{\sc K.~Carlberg, M.~Barone, and H.~Antil}, {\em Galerkin v.\ least-squares
  {P}etrov--{G}alerkin projection in nonlinear model reduction}, Journal of
  Computational Physics, 330 (2017), pp.~693--734.

\bibitem{carlberg2011efficient}
{\sc K.~Carlberg, C.~Bou-Mosleh, and C.~Farhat}, {\em {Efficient non-linear
  model reduction via a least-squares Petrov--Galerkin projection and
  compressive tensor approximations}}, International Journal for Numerical
  Methods in Engineering, 86 (2011), pp.~155--181.

\bibitem{carlberg2018conservative}
{\sc K.~Carlberg, Y.~Choi, and S.~Sargsyan}, {\em Conservative model reduction
  for finite-volume models}, Journal of Computational Physics, 371 (2018),
  pp.~280--314.

\bibitem{carlberg2011low}
{\sc K.~Carlberg and C.~Farhat}, {\em A low-cost, goal-oriented `compact proper
  orthogonal decomposition' basis for model reduction of static systems},
  International Journal for Numerical Methods in Engineering, 86 (2011),
  pp.~381--402.

\bibitem{carlberg2013gnat}
{\sc K.~Carlberg, C.~Farhat, J.~Cortial, and D.~Amsallem}, {\em The {GNAT}
  method for nonlinear model reduction: effective implementation and
  application to computational fluid dynamics and turbulent flows}, Journal of
  Computational Physics, 242 (2013), pp.~623--647.

\bibitem{carlberg2018recovering}
{\sc K.~Carlberg, A.~Jameson, M.~J. Kochenderfer, J.~Morton, L.~Peng, and F.~D.
  Witherden}, {\em Recovering missing {CFD} data for high-order discretizations
  using deep neural networks and dynamics learning}, arXiv preprint
  arXiv:1812.01177,  (2018).

\bibitem{carlberg2015decreasing}
{\sc K.~Carlberg, J.~Ray, and B.~van Bloemen~Waanders}, {\em Decreasing the
  temporal complexity for nonlinear, implicit reduced-order models by
  forecasting}, Computer Methods in Applied Mechanics and Engineering, 289
  (2015), pp.~79--103.

\bibitem{clevert2015fast}
{\sc D.~Clevert, T.~Unterthiner, and S.~Hochreiter}, {\em Fast and accurate
  deep network learning by exponential linear units {(ELUs)}}, in the 4th
  International Conference on Learning Representations, 2016.

\bibitem{craig1968coupling}
{\sc R.~Craig and M.~Bampton}, {\em Coupling of substructures for dynamic
  analyses}, AIAA Journal, 6 (1968), pp.~1313--1319.

\bibitem{demers1993non}
{\sc D.~DeMers and G.~W. Cottrell}, {\em Non-linear dimensionality reduction},
  in Advances in Neural Information Processing Systems, 1993, pp.~580--587.

\bibitem{dihlmann2011model}
{\sc M.~Dihlmann, M.~Drohmann, and B.~Haasdonk}, {\em Model reduction of
  parametrized evolution problems using the reduced basis method with adaptive
  time-partitioning}, in the International Conference on Adaptive Modeling and
  Simulation, 2011, pp.~156--167.

\bibitem{donoho2003hessian}
{\sc D.~L. Donoho and C.~Grimes}, {\em {Hessian eigenmaps: Locally linear
  embedding techniques for high-dimensional data}}, Proceedings of the National
  Academy of Sciences, 100 (2003), pp.~5591--5596.

\bibitem{drohmann2011adaptive}
{\sc M.~Drohmann, B.~Haasdonk, and M.~Ohlberger}, {\em Adaptive reduced basis
  methods for nonlinear convection--diffusion equations}, in Finite Volumes for
  Complex Applications VI Problems \& Perspectives, Springer, 2011,
  pp.~369--377.

\bibitem{duchi2011adaptive}
{\sc J.~Duchi, E.~Hazan, and Y.~Singer}, {\em Adaptive subgradient methods for
  online learning and stochastic optimization}, Journal of Machine Learning
  Research, 12 (2011), pp.~2121--2159.

\bibitem{dumoulin2016guide}
{\sc V.~Dumoulin and F.~Visin}, {\em A guide to convolution arithmetic for deep
  learning}, arXiv preprint arXiv:1603.07285,  (2016).

\bibitem{gerbeau2014approximated}
{\sc J.-F. Gerbeau and D.~Lombardi}, {\em Approximated {L}ax pairs for the
  reduced order integration of nonlinear evolution equations}, Journal of
  Computational Physics, 265 (2014), pp.~246--269.

\bibitem{glorot2010understanding}
{\sc X.~Glorot and Y.~Bengio}, {\em Understanding the difficulty of training
  deep feedforward neural networks}, in Proceedings of the Thirteenth
  International Conference on Artificial Intelligence and Statistics, 2010,
  pp.~249--256.

\bibitem{gonzalez18}
{\sc F.~J. Gonzalez and M.~Balajewicz}, {\em Deep convolutional recurrent
  autoencoders for learning low-dimensional feature dynamics of fluid systems},
  arXiv preprint arXiv:1808.01346,  (2018).

\bibitem{goodfellow2016deep}
{\sc I.~Goodfellow, Y.~Bengio, A.~Courville, and Y.~Bengio}, {\em Deep
  Learning}, vol.~1, MIT press Cambridge, 2016.

\bibitem{gu2011model}
{\sc C.~Gu}, {\em Model order reduction of nonlinear dynamical systems}, PhD
  thesis, UC Berkeley, 2011.

\bibitem{GugercinIRKA}
{\sc S.~Gugercin, A.~C. Antoulas, and C.~Beattie}, {\em {$H_2$} model reduction
  for large-scale linear dynamical systems}, SIAM Journal on Matrix Analysis
  and Applications, 30 (2008), pp.~609--638.

\bibitem{hartman2017deep}
{\sc D.~Hartman and L.~K. Mestha}, {\em A deep learning framework for model
  reduction of dynamical systems}, in Control Technology and Applications
  (CCTA), 2017 IEEE Conference on, IEEE, 2017, pp.~1917--1922.

\bibitem{he2015delving}
{\sc K.~He, X.~Zhang, S.~Ren, and J.~Sun}, {\em Delving deep into rectifiers:
  Surpassing human-level performance on imagenet classification}, in
  Proceedings of the IEEE International Conference on Computer Vision, 2015,
  pp.~1026--1034.

\bibitem{hinton2006reducing}
{\sc G.~E. Hinton and R.~R. Salakhutdinov}, {\em Reducing the dimensionality of
  data with neural networks}, {Science}, 313 (2006), pp.~504--507.

\bibitem{POD}
{\sc P.~Holmes, J.~Lumley, and G.~Berkooz}, {\em Turbulence, Coherent
  Structures, Dynamical Systems and Symmetry}, Cambridge University Press,
  1996.

\bibitem{holmes2012turbulence}
{\sc P.~Holmes, J.~L. Lumley, G.~Berkooz, and C.~W. Rowley}, {\em {Turbulence,
  Coherent Structures, Dynamical Systems and Symmetry}}, Cambridge University
  Press, 2012.

\bibitem{hotelling1933analysis}
{\sc H.~Hotelling}, {\em Analysis of a complex of statistical variables into
  principal components.}, Journal of Educational Psychology, 24 (1933), p.~417.

\bibitem{hsu2003practical}
{\sc C.~W. Hsu, C.~C. Chang, and C.~J. Lin}, {\em A practical guide to support
  vector classification}.
\newblock Available at:
  https://www.csie.ntu.edu.tw/~cjlin/papers/guide/guide.pdf, 2003.

\bibitem{iollo2014advection}
{\sc A.~Iollo and D.~Lombardi}, {\em Advection modes by optimal mass transfer},
  Physical Review E, 89 (2014), p.~022923.

\bibitem{kashima2016nonlinear}
{\sc K.~Kashima}, {\em Nonlinear model reduction by deep autoencoder of noise
  response data}, in Decision and Control (CDC), 2016 IEEE 55th Conference on,
  IEEE, 2016, pp.~5750--5755.

\bibitem{kingma2014adam}
{\sc D.~P. Kingma and J.~Ba}, {\em {Adam: A method for stochastic
  optimization}}, in the 3rd International Conference on Learning
  Representations, 2015.

\bibitem{kohonen1982self}
{\sc T.~Kohonen}, {\em Self-organized formation of topologically correct
  feature maps}, Biological cybernetics, 43 (1982), pp.~59--69.

\bibitem{lall2002sab}
{\sc S.~Lall, J.~E. Marsden, and S.~Glava{\v{s}}ki}, {\em A subspace approach
  to balanced truncation for model reduction of nonlinear control systems},
  International Journal of Robust and Nonlinear Control, 12 (2002),
  pp.~519--535.

\bibitem{lawrence2004gaussian}
{\sc N.~D. Lawrence}, {\em Gaussian process latent variable models for
  visualisation of high dimensional data}, in Advances in Neural Information
  Processing Systems, 2004, pp.~329--336.

\bibitem{lecun2015deep}
{\sc Y.~LeCun, Y.~Bengio, and G.~Hinton}, {\em Deep learning}, Nature, 521
  (2015), p.~436.

\bibitem{lecun1998gradient}
{\sc Y.~LeCun, L.~Bottou, Y.~Bengio, and P.~Haffner}, {\em Gradient-based
  learning applied to document recognition}, Proceedings of the IEEE, 86
  (1998), pp.~2278--2324.

\bibitem{lusch2017deep}
{\sc B.~Lusch, J.~N. Kutz, and S.~L. Brunton}, {\em Deep learning for universal
  linear embeddings of nonlinear dynamics}, arXiv preprint arXiv:1712.09707,
  (2017).

\bibitem{maaten2008visualizing}
{\sc L.~v.~d. Maaten and G.~Hinton}, {\em Visualizing data using {t-SNE}},
  Journal of Machine Learning Research, 9 (2008), pp.~2579--2605.

\bibitem{martinez2000practical}
{\sc J.~M. Mart{\i}nez}, {\em Practical quasi-{N}ewton methods for solving
  nonlinear systems}, Journal of Computational and Applied Mathematics, 124
  (2000), pp.~97--121.

\bibitem{masci2011stacked}
{\sc J.~Masci, U.~Meier, D.~Cire{\c{s}}an, and J.~Schmidhuber}, {\em Stacked
  convolutional auto-encoders for hierarchical feature extraction}, in
  International Conference on Artificial Neural Networks, Springer, 2011,
  pp.~52--59.

\bibitem{milano2002neural}
{\sc M.~Milano and P.~Koumoutsakos}, {\em Neural network modeling for near wall
  turbulent flow}, Journal of Computational Physics, 182 (2002), pp.~1--26.

\bibitem{mojgani2017lagrangian}
{\sc R.~Mojgani and M.~Balajewicz}, {\em Lagrangian basis method for
  dimensionality reduction of convection dominated nonlinear flows}, arXiv
  preprint arXiv:1701.04343,  (2017).

\bibitem{moore1981principal}
{\sc B.~Moore}, {\em Principal component analysis in linear systems:
  Controllability, observability, and model reduction}, IEEE Transactions on
  Automatic Control, 26 (1981), pp.~17--32.

\bibitem{morton2018deep}
{\sc J.~Morton, F.~D. Witherden, A.~Jameson, and M.~J. Kochenderfer}, {\em Deep
  dynamical modeling and control of unsteady fluid flows}, arXiv preprint
  arXiv:1805.07472,  (2018).

\bibitem{nair2017transported}
{\sc N.~J. Nair and M.~Balajewicz}, {\em Transported snapshot model order
  reduction approach for parametric, steady-state fluid flows containing
  parameter dependent shocks}, arXiv preprint arXiv:1712.09144,  (2017).

\bibitem{ohlberger2013nonlinear}
{\sc M.~Ohlberger and S.~Rave}, {\em Nonlinear reduced basis approximation of
  parameterized evolution equations via the method of freezing}, Comptes Rendus
  Mathematique, 351 (2013), pp.~901--906.

\bibitem{ohlberger2016reduced}
{\sc M.~Ohlberger and S.~Rave}, {\em Reduced basis methods: Success,
  limitations and future challenges}, in Proceedings of ALGORITMY, Slovak
  University of Technology, 2016, pp.~1--12.

\bibitem{otto2017linearly}
{\sc S.~E. Otto and C.~W. Rowley}, {\em Linearly-recurrent autoencoder networks
  for learning dynamics}, arXiv preprint arXiv:1712.01378,  (2017).

\bibitem{peherstorfer2015online}
{\sc B.~Peherstorfer and K.~Willcox}, {\em Online adaptive model reduction for
  nonlinear systems via low-rank updates}, SIAM Journal on Scientific
  Computing, 37 (2015), pp.~A2123--A2150.

\bibitem{pinkus2012n}
{\sc A.~Pinkus}, {\em {n-Widths in Approximation Theory}}, vol.~7, Springer
  Science \& Business Media, 2012.

\bibitem{prud2002reliable}
{\sc C.~Prud'Homme, D.~V. Rovas, K.~Veroy, L.~Machiels, Y.~Maday, A.~T. Patera,
  and G.~Turinici}, {\em {Reliable real-time solution of parametrized partial
  differential equations: Reduced-basis output bound methods}}, Journal of
  Fluids Engineering, 124 (2002), pp.~70--80.

\bibitem{reiss2018shifted}
{\sc J.~Reiss, P.~Schulze, J.~Sesterhenn, and V.~Mehrmann}, {\em The shifted
  proper orthogonal decomposition: A mode decomposition for multiple transport
  phenomena}, SIAM Journal on Scientific Computing, 40 (2018),
  pp.~A1322--A1344.

\bibitem{rewienski2003trajectory}
{\sc M.~J. Rewie{\'n}ski}, {\em A trajectory piecewise-linear approach to model
  order reduction of nonlinear dynamical systems}, PhD thesis, {Massachusetts
  Institute of Technology}, 2003.

\bibitem{roweis2000nonlinear}
{\sc S.~T. Roweis and L.~K. Saul}, {\em Nonlinear dimensionality reduction by
  locally linear embedding}, Science, 290 (2000), pp.~2323--2326.

\bibitem{rowley2005mrf}
{\sc C.~Rowley}, {\em Model reduction for fluids, using balanced proper
  orthogonal decomposition}, International Journal of Bifurcation and Chaos, 15
  (2005), pp.~997--1013.

\bibitem{rozza2007reduced}
{\sc G.~Rozza, D.~B.~P. Huynh, and A.~T. Patera}, {\em Reduced basis
  approximation and a posteriori error estimation for affinely parametrized
  elliptic coercive partial differential equations}, Archives of Computational
  Methods in Engineering, 15 (2007), p.~1.

\bibitem{rumelhart1986learning}
{\sc D.~E. Rumelhart, G.~E. Hinton, and R.~J. Williams}, {\em Learning
  representations by back-propagating errors}, Nature, 323 (1986),
  pp.~533--536.

\bibitem{sarle1997neural}
{\sc W.~S. Sarle}, {\em Neural network {FAQ}, periodic posting to the usenet
  newsgroup comp. ai. neural-nets}.
\newblock Available at: ftp://ftp. sas. com/pub/neural/FAQ. html, 1997.

\bibitem{scholkopf1998nonlinear}
{\sc B.~Sch{\"o}lkopf, A.~Smola, and K.-R. M{\"u}ller}, {\em Nonlinear
  component analysis as a kernel eigenvalue problem}, Neural Computation, 10
  (1998), pp.~1299--1319.

\bibitem{taddei2015reduced}
{\sc T.~Taddei, S.~Perotto, and A.~Quarteroni}, {\em Reduced basis techniques
  for nonlinear conservation laws}, ESAIM: Mathematical Modelling and Numerical
  Analysis, 49 (2015), pp.~787--814.

\bibitem{takeishi2017learning}
{\sc N.~Takeishi, Y.~Kawahara, and T.~Yairi}, {\em Learning {K}oopman invariant
  subspaces for dynamic mode decomposition}, in Advances in Neural Information
  Processing Systems, 2017, pp.~1130--1140.

\bibitem{tenenbaum2000global}
{\sc J.~B. Tenenbaum, V.~De~Silva, and J.~C. Langford}, {\em A global geometric
  framework for nonlinear dimensionality reduction}, Science, 290 (2000),
  pp.~2319--2323.

\bibitem{Tieleman2012}
{\sc T.~Tieleman and G.~Hinton}, {\em Lecture 6.5---{RMSP}rop: Divide the
  gradient by a running average of its recent magnitude}.
\newblock COURSERA: Neural Networks for Machine Learning, 2012.

\bibitem{walder2009diffeomorphic}
{\sc C.~Walder and B.~Sch{\"o}lkopf}, {\em Diffeomorphic dimensionality
  reduction}, in Advances in Neural Information Processing Systems, 2009,
  pp.~1713--1720.

\bibitem{welper2017h}
{\sc G.~Welper}, {\em $ h $ and $ hp $-adaptive interpolation by transformed
  snapshots for parametric and stochastic hyperbolic {PDE}s}, arXiv preprint
  arXiv:1710.11481,  (2017).

\bibitem{welper2017interpolation}
{\sc G.~Welper}, {\em Interpolation of functions with parameter dependent jumps
  by transformed snapshots}, SIAM Journal on Scientific Computing, 39 (2017),
  pp.~A1225--A1250.

\bibitem{willcox2002bmr}
{\sc K.~Willcox and J.~Peraire}, {\em {Balanced model reduction via the proper
  orthogonal decomposition}}, AIAA Journal, 40 (2002), pp.~2323--2330.

\bibitem{zahr2015progressive}
{\sc M.~J. Zahr and C.~Farhat}, {\em Progressive construction of a parametric
  reduced-order model for {PDE}-constrained optimization}, International
  Journal for Numerical Methods in Engineering, 102 (2015), pp.~1111--1135.

\end{thebibliography}
\bibliographystyle{siam}

\end{document}